\newtheorem{theo}{Theorem}
\newtheorem{conj}[theo]{Conjecture}
\newtheorem{prop}[theo]{Proposition}
\newtheorem{propdef}[theo]{Proposition-Definition}
\newtheorem{lem}[theo]{Lemma}
\newtheorem{cor}[theo]{Corollary}
{\theoremstyle{definition} \newtheorem{defi}[theo]{Definition}}
{\theoremstyle{definition} }
{\theoremstyle{definition} \newtheorem{algo}[theo]{Algorithm}}
{\theoremstyle{definition} }
{\theoremstyle{definition} }
{\theoremstyle{definition} }
{\theoremstyle{remark} \newtheorem{rem}[theo]{Remark}}
{\theoremstyle{definition} }
\theoremstyle{definition} }
\theoremstyle{definition} }
\theoremstyle{definition} }
\theoremstyle{definition} }
\theoremstyle{definition} }
\theoremstyle{remark} }
\theoremstyle{definition} }
\DeclareMathOperator{\SP}{\mathcal{P}}
\DeclareMathOperator{\SpDC}{\text{SpDC}}
\DeclareMathOperator{\DC}{\text{DC}}
\DeclareMathOperator{\T} {\mathcal{T}}
\DeclareMathOperator{\TT} {\mathfrak{T}}
\DeclareMathOperator{\F} {\mathcal{F}}
\DeclareMathOperator{\Tab} {\mathcal{T}}
\DeclareMathOperator{\SpF}{\text{Sp}\mathcal{F}}
\DeclareMathOperator{\ndf}{\text{ndf}}
\DeclareMathOperator{\ngr}{\text{ng}}
\DeclareMathOperator{\fr}{\text{fr}}
\begin{document}

\title{Enumerating the symplectic Dellac configurations}

\author{Ange Bigeni}

\selectlanguage{english}

\begin{abstract}
Fang and Fourier defined the symplectic Dellac configurations in order to parametrize the torus fixed points of the symplectic degenerated flag varieties, and conjectured that their numbers are the elements of a sequence $(r_n)_{n \geq 0} = (1, 2, 10, 98, 1594, \hdots)$ which appears in the study by Randrianarivony and Zeng of the median Euler numbers. In this paper, we prove the conjecture by considering a combinatorial interpretation of the integers $r_n$ in terms of the surjective pistols (which form a well-known combinatorial model of the Genocchi numbers), and constructing an appropriate surjection from the symplectic Dellac configurations to the surjective pistols.
\end{abstract}

\maketitle

\section*{Notations}
For all pair of integers $(n,m)$ such that $n<m$, the set of integers $\left\{n,n+1,\hdots,m\right\}$ is denoted by $[n,m]$. If $n$ is a positive integer, we denote by $[n]$ the set $[1,n]$. The cardinality of a finite set $S$ is denoted by $\# S$. If a set of integers $\left\{i_1,i_2,\hdots,i_m\right\}$ has the property $i_k < i_{k+1}$ for all $k \in [m-1]$, we denote it by $\left\{i_1,i_2,\hdots,i_m\right\}_<$.

\section{Introduction}

Let $n$ be a positive integer. Recall that a Dellac configuration of size $n$~\cite{Dellac} is a tableau $D$, made of $n$ columns and $2n$ rows, that contains $2n$ dots such that~:
\begin{itemize}
\item every row contains exactly one dot;
\item every column contains exactly two dots;
\item if there is a dot in the box $(j,i)$ of $D$ (\textit{i.e.}, in the intersection of its $j$-th column from left to right and its $i$-th row from bottom to top), then $j \leq i \leq j+n$.
\end{itemize}

The set of the Dellac configurations of size $n$ is denoted by $\DC_n$. For example, in Figure~\ref{fig:DC3} are depicted the 7 elements of $\DC_3$.

\begin{figure}[!htbp]

\begin{center}

\begin{tikzpicture}[scale=0.35]
\draw (0,0) grid[step=1] (3,6);
\draw (0,0) -- (3,3);
\draw (0,3) -- (3,6);
\fill (0.5,0.5) circle (0.2);
\fill (0.5,1.5) circle (0.2);
\fill (1.5,2.5) circle (0.2);
\fill (1.5,3.5) circle (0.2);
\fill (2.5,4.5) circle (0.2);
\fill (2.5,5.5) circle (0.2);

\begin{scope}[xshift=3.5cm]
\draw (0,0) grid[step=1] (3,6);
\draw (0,0) -- (3,3);
\draw (0,3) -- (3,6);
\fill (0.5,0.5) circle (0.2);
\fill (0.5,1.5) circle (0.2);
\fill (1.5,2.5) circle (0.2);
\fill (2.5,3.5) circle (0.2);
\fill (1.5,4.5) circle (0.2);
\fill (2.5,5.5) circle (0.2);
\end{scope}

\begin{scope}[xshift=7cm]
\draw (0,0) grid[step=1] (3,6);
\draw (0,0) -- (3,3);
\draw (0,3) -- (3,6);
\fill (0.5,0.5) circle (0.2);
\fill (0.5,1.5) circle (0.2);
\fill (2.5,2.5) circle (0.2);
\fill (1.5,3.5) circle (0.2);
\fill (1.5,4.5) circle (0.2);
\fill (2.5,5.5) circle (0.2);
\end{scope}

\begin{scope}[xshift=10.5cm]
\draw (0,0) grid[step=1] (3,6);
\draw (0,0) -- (3,3);
\draw (0,3) -- (3,6);
\fill (0.5,0.5) circle (0.2);
\fill (1.5,1.5) circle (0.2);
\fill (0.5,2.5) circle (0.2);
\fill (1.5,3.5) circle (0.2);
\fill (2.5,4.5) circle (0.2);
\fill (2.5,5.5) circle (0.2);
\end{scope}

\begin{scope}[xshift=14cm]
\draw (0,0) grid[step=1] (3,6);
\draw (0,0) -- (3,3);
\draw (0,3) -- (3,6);
\fill (0.5,0.5) circle (0.2);
\fill (1.5,1.5) circle (0.2);
\fill (0.5,2.5) circle (0.2);
\fill (2.5,3.5) circle (0.2);
\fill (1.5,4.5) circle (0.2);
\fill (2.5,5.5) circle (0.2);
\end{scope}

\begin{scope}[xshift=17.5cm]
\draw (0,0) grid[step=1] (3,6);
\draw (0,0) -- (3,3);
\draw (0,3) -- (3,6);
\fill (0.5,0.5) circle (0.2);
\fill (1.5,1.5) circle (0.2);
\fill (1.5,2.5) circle (0.2);
\fill (0.5,3.5) circle (0.2);
\fill (2.5,4.5) circle (0.2);
\fill (2.5,5.5) circle (0.2);
\end{scope}

\begin{scope}[xshift=21cm]
\draw (0,0) grid[step=1] (3,6);
\draw (0,0) -- (3,3);
\draw (0,3) -- (3,6);
\fill (0.5,0.5) circle (0.2);
\fill (1.5,1.5) circle (0.2);
\fill (2.5,2.5) circle (0.2);
\fill (0.5,3.5) circle (0.2);
\fill (1.5,4.5) circle (0.2);
\fill (2.5,5.5) circle (0.2);
\end{scope}
\end{tikzpicture}

\end{center}

\caption{The $h_3 = 7$ elements of $\DC_3$.}
\label{fig:DC3}

\end{figure}

It is well-known~\cite{Feigin2} that the cardinality of $\DC_n$ is $h_n$ where $(h_n)_{n \geq 0} = (1, 1, 2, 7, 38, 295, \hdots)$ is the sequence of the normalized median Genocchi numbers~\cite{normalizedmediangenocchinumbers}. Feigin~\cite{Feigin2,Feigin3} proved that the Poincaré polynomial of the degenerate flag variety $\F^a_n$ has a combinatorial interpretation in terms of the Dellac configurations of size $n$, in particular its Euler characteristic equals $\# \DC_n = h_n$. Afterwards, following computer experiments, Cerulli Irelli and Feigin conjectured that in the case of the symplectic degenerate flag varieties $\SpF^a_{2n}$~\cite{feigin0}, the role of the sequence $(h_n)_{n \geq 0}$ is played by the sequence of positive integers $(r_n)_{n \geq 0} = (1,2,10,98,1594,\hdots)$~\cite{eulermedian} defined by Randrianarivony and Zeng~\cite{rand} following $r_n = D_n(1)/2^n$ where $D_0(x) =~1$ and
\begin{equation*}
D_{n+1}(x) = (x+1)(x+2) D_n(x+2) - x(x+1) D_n(x).
\end{equation*}

Now, Fang and Fourier~\cite{fang} have defined a combinatorial model of the Euler characteristic $\chi(\SpF^a_{2n})$ of the symplectic degenerate flag variety $\SpF^a_{2n}$, through the set $\SpDC_{2n}$ of the symplectic Dellac configurations of size $2n$.

\begin{defi}[Fang and Fourier~\cite{fang}]
A symplectic Dellac configuration of size $2n$ is an element $S$ of $\DC_{2n}$ such that, for all $i \in [4n]$ and $j \in [2n]$, there is a dot in the box $(j,i)$ of $S$ if and only if there is a dot in its box $(2n+1-j,4n+1-i)$ (in other words, there exists a central reflection of $S$ with respect to the center of $S$). The set of the symplectic Dellac configurations of size $2n$ is denoted by $\SpDC_{2n}$.
\end{defi}

For example, in Figure~\ref{fig:SpDC4} are depicted the 10 elements of $\SpDC_4$.

\begin{figure}[!htbp]

\begin{center}

\begin{tikzpicture}[scale=0.3]

\draw (0,0) grid[step=1] (2,8);
\draw (0,0) -- (2,2);
\draw (0,4) -- (2,6);
\fill (0.5,0.5) circle (0.2);
\fill (0.5,1.5) circle (0.2);
\fill (1.5,2.5) circle (0.2);
\fill (1.5,3.5) circle (0.2);
\begin{scope}[shift={(4,8)},rotate=180]
\draw (0,0) grid[step=1] (2,8);
\draw (0,0) -- (2,2);
\draw (0,4) -- (2,6);
\fill (0.5,0.5) circle (0.2);
\fill (0.5,1.5) circle (0.2);
\fill (1.5,2.5) circle (0.2);
\fill (1.5,3.5) circle (0.2);
\end{scope}

\begin{scope}[xshift=5cm]
\draw (0,0) grid[step=1] (2,8);
\draw (0,0) -- (2,2);
\draw (0,4) -- (2,6);
\fill (0.5,0.5) circle (0.2);
\fill (0.5,1.5) circle (0.2);
\fill (1.5,2.5) circle (0.2);
\fill (1.5,4.5) circle (0.2);
\begin{scope}[shift={(4,8)},rotate=180]
\draw (0,0) grid[step=1] (2,8);
\draw (0,0) -- (2,2);
\draw (0,4) -- (2,6);
\fill (0.5,0.5) circle (0.2);
\fill (0.5,1.5) circle (0.2);
\fill (1.5,2.5) circle (0.2);
\fill (1.5,4.5) circle (0.2);
\end{scope}
\end{scope}

\begin{scope}[xshift=10cm]
\draw (0,0) grid[step=1] (2,8);
\draw (0,0) -- (2,2);
\draw (0,4) -- (2,6);
\fill (0.5,0.5) circle (0.2);
\fill (0.5,1.5) circle (0.2);
\fill (1.5,3.5) circle (0.2);
\fill (1.5,5.5) circle (0.2);
\begin{scope}[shift={(4,8)},rotate=180]
\draw (0,0) grid[step=1] (2,8);
\draw (0,0) -- (2,2);
\draw (0,4) -- (2,6);
\fill (0.5,0.5) circle (0.2);
\fill (0.5,1.5) circle (0.2);
\fill (1.5,3.5) circle (0.2);
\fill (1.5,5.5) circle (0.2);
\end{scope}
\end{scope}

\begin{scope}[xshift=15cm]
\draw (0,0) grid[step=1] (2,8);
\draw (0,0) -- (2,2);
\draw (0,4) -- (2,6);
\fill (0.5,0.5) circle (0.2);
\fill (0.5,1.5) circle (0.2);
\fill (1.5,4.5) circle (0.2);
\fill (1.5,5.5) circle (0.2);
\begin{scope}[shift={(4,8)},rotate=180]
\draw (0,0) grid[step=1] (2,8);
\draw (0,0) -- (2,2);
\draw (0,4) -- (2,6);
\fill (0.5,0.5) circle (0.2);
\fill (0.5,1.5) circle (0.2);
\fill (1.5,4.5) circle (0.2);
\fill (1.5,5.5) circle (0.2);
\end{scope}
\end{scope}

\begin{scope}[xshift=20cm]
\draw (0,0) grid[step=1] (2,8);
\draw (0,0) -- (2,2);
\draw (0,4) -- (2,6);
\fill (0.5,0.5) circle (0.2);
\fill (1.5,1.5) circle (0.2);
\fill (0.5,2.5) circle (0.2);
\fill (1.5,3.5) circle (0.2);
\begin{scope}[shift={(4,8)},rotate=180]
\draw (0,0) grid[step=1] (2,8);
\draw (0,0) -- (2,2);
\draw (0,4) -- (2,6);
\fill (0.5,0.5) circle (0.2);
\fill (1.5,1.5) circle (0.2);
\fill (0.5,2.5) circle (0.2);
\fill (1.5,3.5) circle (0.2);
\end{scope}
\end{scope}

\begin{scope}[shift={(0,-9)}]
\draw (0,0) grid[step=1] (2,8);
\draw (0,0) -- (2,2);
\draw (0,4) -- (2,6);
\fill (0.5,0.5) circle (0.2);
\fill (1.5,1.5) circle (0.2);
\fill (0.5,2.5) circle (0.2);
\fill (1.5,4.5) circle (0.2);
\begin{scope}[shift={(4,8)},rotate=180]
\draw (0,0) grid[step=1] (2,8);
\draw (0,0) -- (2,2);
\draw (0,4) -- (2,6);
\fill (0.5,0.5) circle (0.2);
\fill (1.5,1.5) circle (0.2);
\fill (0.5,2.5) circle (0.2);
\fill (1.5,4.5) circle (0.2);
\end{scope}
\end{scope}

\begin{scope}[shift={(5,-9)}]
\draw (0,0) grid[step=1] (2,8);
\draw (0,0) -- (2,2);
\draw (0,4) -- (2,6);
\fill (0.5,0.5) circle (0.2);
\fill (1.5,1.5) circle (0.2);
\fill (1.5,2.5) circle (0.2);
\fill (0.5,3.5) circle (0.2);
\begin{scope}[shift={(4,8)},rotate=180]
\draw (0,0) grid[step=1] (2,8);
\draw (0,0) -- (2,2);
\draw (0,4) -- (2,6);
\fill (0.5,0.5) circle (0.2);
\fill (1.5,1.5) circle (0.2);
\fill (1.5,2.5) circle (0.2);
\fill (0.5,3.5) circle (0.2);
\end{scope}
\end{scope}

\begin{scope}[shift={(10,-9)}]
\draw (0,0) grid[step=1] (2,8);
\draw (0,0) -- (2,2);
\draw (0,4) -- (2,6);
\fill (0.5,0.5) circle (0.2);
\fill (1.5,1.5) circle (0.2);
\fill (2.5,2.5) circle (0.2);
\fill (0.5,3.5) circle (0.2);
\begin{scope}[shift={(4,8)},rotate=180]
\draw (0,0) grid[step=1] (2,8);
\draw (0,0) -- (2,2);
\draw (0,4) -- (2,6);
\fill (0.5,0.5) circle (0.2);
\fill (1.5,1.5) circle (0.2);
\fill (2.5,2.5) circle (0.2);
\fill (0.5,3.5) circle (0.2);
\end{scope}
\end{scope}

\begin{scope}[shift={(15,-9)}]
\draw (0,0) grid[step=1] (2,8);
\draw (0,0) -- (2,2);
\draw (0,4) -- (2,6);
\fill (0.5,0.5) circle (0.2);
\fill (1.5,1.5) circle (0.2);
\fill (1.5,2.5) circle (0.2);
\fill (0.5,4.5) circle (0.2);
\begin{scope}[shift={(4,8)},rotate=180]
\draw (0,0) grid[step=1] (2,8);
\draw (0,0) -- (2,2);
\draw (0,4) -- (2,6);
\fill (0.5,0.5) circle (0.2);
\fill (1.5,1.5) circle (0.2);
\fill (1.5,2.5) circle (0.2);
\fill (0.5,4.5) circle (0.2);
\end{scope}
\end{scope}

\begin{scope}[shift={(20,-9)}]
\draw (0,0) grid[step=1] (2,8);
\draw (0,0) -- (2,2);
\draw (0,4) -- (2,6);
\fill (0.5,0.5) circle (0.2);
\fill (1.5,1.5) circle (0.2);
\fill (2.5,2.5) circle (0.2);
\fill (0.5,4.5) circle (0.2);
\begin{scope}[shift={(4,8)},rotate=180]
\draw (0,0) grid[step=1] (2,8);
\draw (0,0) -- (2,2);
\draw (0,4) -- (2,6);
\fill (0.5,0.5) circle (0.2);
\fill (1.5,1.5) circle (0.2);
\fill (2.5,2.5) circle (0.2);
\fill (0.5,4.5) circle (0.2);
\end{scope}
\end{scope}

\end{tikzpicture}

\end{center}

\caption{The $10$ elements of $\SpDC_4$.}
\label{fig:SpDC4}

\end{figure}

\begin{prop}[Fang and Fourier~\cite{fang}]
For all $n \geq 1$, the Euler characteristic of $\SpF^a_{2n}$ is the cardinality of $\SpDC_{2n}$.
\end{prop}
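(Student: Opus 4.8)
The plan is to deduce the statement from the general principle that the topological Euler characteristic of a complex projective variety carrying a torus action equals the Euler characteristic of its fixed-point locus, and then to identify that locus with $\SpDC_{2n}$.

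First I would recall the realization of $\SpF^a_{2n}$ as a projective variety obtained by PBW degeneration of the symplectic flag variety, together with the residual action of a maximal torus $T$ of the symplectic group, acting by rescaling the coordinate vectors of the underlying degenerate representation. Since this action only rescales coordinates, a point is $T$-fixed exactly when every subspace of the corresponding degenerate isotropic flag is a coordinate subspace; in particular the fixed locus $(\SpF^a_{2n})^T$ is finite.

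Second, and this is the combinatorial core, I would construct an explicit bijection $(\SpF^a_{2n})^T \to \SpDC_{2n}$. Recording, for each step of a coordinate degenerate flag, which basis vectors are used yields a $0/1$ filling of a tableau; the degenerate incidence relations become the staircase support constraints, so the filling is an element of $\DC_{2n}$, while the self-duality of the isotropic flag under the symplectic form forces precisely the central reflection required in the definition of $\SpDC_{2n}$. This recovers the Fang--Fourier description of the fixed points.

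Finally I would conclude: picking a generic one-parameter subgroup $\lambda\colon\mathbb{C}^\ast\to T$ with $(\SpF^a_{2n})^\lambda=(\SpF^a_{2n})^T$, every non-fixed $\lambda$-orbit is a copy of $\mathbb{C}^\ast$ (possibly up to a finite quotient), hence of vanishing Euler characteristic, and additivity yields $\chi(\SpF^a_{2n})=\chi\big((\SpF^a_{2n})^T\big)=\#(\SpF^a_{2n})^T=\#\SpDC_{2n}$; when an attracting-cell (Bia\l ynicki--Birula) paving by affine spaces indexed by the fixed points is available, the same count follows and moreover forces the odd cohomology to vanish. The main obstacle is the second step: making the dictionary between coordinate degenerate isotropic flags and symplectic Dellac configurations precise, in particular checking that the symplectic self-duality corresponds exactly to the central reflection of the definition and that the assignment is genuinely bijective; the geometric inputs in the first and third steps are standard once the degenerate variety and its torus action are in place.
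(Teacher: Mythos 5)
This proposition is not proved anywhere in the paper: it is imported verbatim from Fang and Fourier's article, and the paper's own work begins only after it, namely the purely combinatorial proof that $\# \SpDC_{2n} = r_n$ via surjective pistols. So there is no internal proof to compare yours against; the right benchmark is the cited reference, whose very title (``Torus fixed points in Schubert varieties\dots'') indicates that your route --- identify the torus fixed points of $\SpF^a_{2n}$ with symplectic Dellac configurations, then kill the non-fixed orbits by additivity of the Euler characteristic --- is exactly the intended one. Your first and third steps are standard and sound: the degenerate symplectic flag variety is projective with an action of the maximal torus, the fixed points of a suitably generic one-parameter subgroup coincide with the $T$-fixed points, every non-fixed orbit is a $\mathbb{C}^*$ (up to finite quotient) and so contributes $0$ to $\chi$, whence $\chi(\SpF^a_{2n}) = \#\left(\SpF^a_{2n}\right)^T$ once the fixed locus is known to be finite; note that this argument needs no smoothness, which matters since degenerate flag varieties are in general singular, so it is safer to lean on this orbit-counting version than on the Bia\l ynicki--Birula paving you mention as an alternative. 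The genuine content, as you yourself flag, is the second step: verifying that coordinate degenerate isotropic flags correspond bijectively to elements of $\SpDC_{2n}$, i.e., that the degenerate incidence conditions on coordinate subspaces produce exactly the staircase support constraints of $\DC_{2n}$, and that self-duality under the symplectic form translates into precisely the central-reflection condition defining $\SpDC_{2n}$. In your write-up this dictionary is asserted rather than constructed, so what you have is a faithful outline of the Fang--Fourier proof rather than a complete argument; closing it requires fixing the basis and symplectic form used in the PBW degeneration, describing the coordinate flags explicitly, and checking both directions of the correspondence.
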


\begin{conj}[Cerulli Irelli and Feigin, Fang and Fourier~\cite{fang}]
\label{conj:1}
The cardinality of $\SpDC_{2n}$ equals $r_n$ for all $n \geq 1$.
\end{conj}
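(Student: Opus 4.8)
The plan is to reduce the enumeration of $\SpDC_{2n}$ to a weighted enumeration of surjective pistols, and then to realize that weighted count combinatorially through a fiber-counting surjection, exactly along the lines announced in the abstract. Recall that a surjective pistol of height $n$ is a surjection $f\colon [2n]\twoheadrightarrow\{2,4,\dots,2n\}$ with $f(k)\ge k$ for all $k$, these being counted by the Genocchi numbers. The first task is to pin down an explicit statistic $s$ on the surjective pistols of height $n$ such that
\begin{equation*}
r_n=\sum_{f}2^{s(f)},
\end{equation*}
the sum ranging over all surjective pistols of height $n$. I would obtain this either by invoking the interpretation of the Randrianarivony--Zeng polynomials $D_n(x)$ in \cite{rand}, or by checking directly that the right-hand side satisfies the defining recurrence $D_{n+1}(x)=(x+1)(x+2)D_n(x+2)-x(x+1)D_n(x)$ after the substitution $x=1$ and division by $2^n$: the recurrence translates into a height-raising insertion rule on pistols, and matching the two sides fixes $s$, which should count a suitable class of ``free'' values of $f$.

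Next I would construct the surjection $\phi\colon\SpDC_{2n}\to\{\text{surjective pistols of height }n\}$. The central symmetry means that a configuration $S$ is completely determined by the placement of its dots in the bottom $2n$ rows, together with how the dots of the central columns straddle the horizontal axis of symmetry. I would read $\phi(S)$ off these data: for each of the first $2n$ rows, I record in terms of an even label the constraint imposed by the unique dot of that row, and then check that the Dellac inequalities $j\le i\le j+2n$, together with the one-dot-per-row and two-dots-per-column conditions and the symmetry, force the resulting function to be a surjective pistol. The delicate point here is the behaviour across rows $2n$ and $2n+1$, which are exchanged by the reflection, and the behaviour of the columns meeting the two diagonal lines.

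Finally I would count the fibers: for each surjective pistol $f$ I claim that $\#\phi^{-1}(f)=2^{s(f)}$, by exhibiting $s(f)$ independent binary choices in reconstructing a symmetric configuration from $f$ — each free value of $f$ corresponding to a pair of symmetric boxes either of which may carry the dot without changing the image pistol or violating any constraint. Summing over $f$ then yields
\begin{equation*}
\#\SpDC_{2n}=\sum_{f}\#\phi^{-1}(f)=\sum_{f}2^{s(f)}=r_n,
\end{equation*}
which is Conjecture~\ref{conj:1}.

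The main obstacle is the fiber count. One must prove that the $s(f)$ binary choices are genuinely free and mutually independent, with no hidden interaction forced by the global Dellac constraints or by the self-pairing of boxes lying on the symmetry axis, and that every symplectic Dellac configuration arises in this way, so that $\phi$ is onto with fibers of exactly the predicted size. Establishing this bijection between $\phi^{-1}(f)$ and a set of $2^{s(f)}$ binary strings — and in particular getting the boundary and central-row bookkeeping right — is where the real work lies; the reduction to surjective pistols and the recurrence check are by comparison routine.
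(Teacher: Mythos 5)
Your step 1 (the identity $r_n=\sum_{f\in\SP_n}2^{\ndf(f)}$ of Randrianarivony and Zeng) and your overall skeleton (surjection onto surjective pistols, fibers of size a power of $2$) agree with the paper's announced strategy, but the mechanism you propose for the fiber count is not merely the hard part left unproven: as stated, it is impossible, and the obstruction is quantitative. You want every fiber $\phi^{-1}(f)$ to consist of $2^{\ndf(f)}$ configurations obtained from one another by relocating dots inside pairs of symmetric boxes, each relocation preserving the image pistol. Moves of this kind are exactly the involutions $s_i$ from the proof of Proposition~\ref{prop:cardinalSpDCselonT} (relocate the dot of row $i$ between columns $j$ and $2n+1-j$, adjusting its centrally symmetric partner), and the orbits of these involutions are canonically indexed by the tableaux $T\in\Tab_n$, the orbit of $T$ having $2^{\fr(T)}$ elements, with $\#\Tab_n=(n+1)!\,n!/2^n$. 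Under your hypothesis each fiber would therefore be contained in a single orbit, each orbit would be a disjoint union of fibers, and hence $\#\SP_n\geq\#\Tab_n$. Already for $n=3$ this reads $G_8=17\geq 18=4!\,3!/2^3$, a contradiction. Equivalently: the $\ndf(f)$ binary choices cannot all be symmetric-box flips, because in general a tableau in the fiber has strictly fewer free dots than $\ndf(f)$ --- the paper's running example has $\overrightarrow{\fr}(T_1)=[1,1,0,0,1,1,1]$ while $\overrightarrow{\ndf}(f_1)=[1,1,0,1,1,1,1]$.

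This is precisely why the paper inserts an intermediate layer rather than mapping $\SpDC_{2n}$ directly onto $\SP_n$. The symmetric-box flips are quotiented out first, giving the easy identity $\#\SpDC_{2n}=\sum_{T\in\Tab_n}2^{\fr(T)}$ (Proposition~\ref{prop:cardinalSpDCselonT}); the real content is then Theorem~\ref{theo:bigeni}, a surjection $\varphi:\Tab_n\twoheadrightarrow\SP_n$ satisfying the \emph{weighted} fiber identity $\sum_{T\in\varphi^{-1}(f)}2^{\fr(T)}=2^{\ndf(f)}$, in which $\fr$ is not constant on fibers. The missing factors of $2$ are produced not by box placements but by two different operations on tableaux --- the switches $S_\mu$ of Algorithm~\ref{algo:switch} and the mutations $M_{j,\gamma}$ of Algorithm~\ref{algo:mute} --- tied together by the identity $\fr(T)+\#\mathcal{S}(T)+\#\mathcal{C}(T)=\ndf(f)$ and an induction over $\mathcal{C}(T)$; and even defining $\varphi$ requires the $T$-paths and the three-fold labeling of Algorithm~\ref{algo:pistollabeling}, for which your ``record an even label for each row'' sketch offers no substitute. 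So the reduction via Randrianarivony--Zeng is fine, but the core of the proof --- realizing the $\ndf(f)$ binary choices when they are provably not all visible as symmetric box pairs --- is missing from your proposal.
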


The aim of this paper is to prove the above conjecture. To do so, we use a combinatorial interpretation of the integers $r_n$ in terms of the surjective pistols. Recall that, for a given $n \geq 1$, a surjective pistol $f \in \SP_n$ is a surjective map $f : [2n] \twoheadrightarrow \left\{2,4,\hdots,2n\right\}$ such that $f(j) \geq j$ for all $j \in [2n]$. For a given element $f \in \SP_n$, an integer $j \in [2n-2]$ is said to be a doubled fixed point if there exists $j' < j$ such that $f(j') = f(j) = j$ (in particular $j$ is even). Let $\ndf(f)$ be the number of elements of $\left\{2,4,\hdots,2n\right\}$ that are not doubled fixed points of $f$ (by definition~$\ndf(f) \geq 1$ because $2n$ is never considered as a doubled fixed point, even though $f(2n-1) = f(2n) = 2n$ for all $f$). From now on, we assimilate every surjective pistol $f \in \SP_n$ into the sequence $(f(1),f(2),\hdots, f(2n))$, in which the images of the even integers that are doubled fixed points (respectively not doubled fixed points) are underlined (respectively written in bold characters). Also, we represent $f \in \SP_n$ by a tableau made of $n$ left-justified rows of length $2,4,6,\hdots,2n$ (from bottom to top) by plotting a dot inside the $\left( f(j)/2-j \right)$-th box (from bottom to top) of the $j$-th column of the tableau for all $j \in [2n]$ ; with precision, if $j$ is an even integer that is not a doubled fixed point of $f$, we plot a symbol $\times$ instead of a dot. For example, we represent in Figure \ref{fig:SP2} the $3$ elements of $\SP_2$, whose numbers of non doubled fixed points are respectively $2$, $1$ and $2$.

\begin{figure}[!htbp]

\begin{center}

\begin{tikzpicture}[scale=0.4]

\draw (0,0) grid[step=1] (2,2);
\draw (2,1) grid[step=1] (4,2);
\fill (0.5,1.5) circle (0.2);
\draw (1.5,0.5) node[scale=1]{$\times$};
\fill (2.5,1.5) circle (0.2);
\draw (3.5,1.5) node[scale=1]{$\times$};

\draw (0.5,2.5) node[scale=1]{$1$};
\draw (1.5,2.5) node[scale=1]{$2$};
\draw (2.5,2.5) node[scale=1]{$3$};
\draw (3.5,2.5) node[scale=1]{$4$};
\draw (-0.5,0.5) node[scale=1]{$2$};
\draw (-0.5,1.5) node[scale=1]{$4$};

\draw (1.75,-1) node[scale=0.8]{$f_1 = (4,\textbf{2},4,\textbf{4})$};

\begin{scope}[shift={(6,0)}]
\draw (0,0) grid[step=1] (2,2);
\draw (2,1) grid[step=1] (4,2);
\fill (0.5,0.5) circle (0.2);
\fill (1.5,0.5) circle (0.2);
\fill (2.5,1.5) circle (0.2);
\draw (3.5,1.5) node[scale=1]{$\times$};

\draw (0.5,2.5) node[scale=1]{$1$};
\draw (1.5,2.5) node[scale=1]{$2$};
\draw (2.5,2.5) node[scale=1]{$3$};
\draw (3.5,2.5) node[scale=1]{$4$};
\draw (-0.5,0.5) node[scale=1]{$2$};
\draw (-0.5,1.5) node[scale=1]{$4$};

\draw (1.75,-1) node[scale=0.8]{$f_2 = (2,\underline{2},4,\textbf{4})$};
\end{scope}

\begin{scope}[shift={(12,0)}]
\draw (0,0) grid[step=1] (2,2);
\draw (2,1) grid[step=1] (4,2);
\fill (0.5,0.5) circle (0.2);
\draw (1.5,1.5) node[scale=1]{$\times$};
\fill (2.5,1.5) circle (0.2);
\draw (3.5,1.5) node[scale=1]{$\times$};

\draw (0.5,2.5) node[scale=1]{$1$};
\draw (1.5,2.5) node[scale=1]{$2$};
\draw (2.5,2.5) node[scale=1]{$3$};
\draw (3.5,2.5) node[scale=1]{$4$};
\draw (-0.5,0.5) node[scale=1]{$2$};
\draw (-0.5,1.5) node[scale=1]{$4$};

\draw (1.75,-1) node[scale=0.8]{$f_3 = (2,\textbf{4},4,\textbf{4})$};
\end{scope}

\end{tikzpicture}

\end{center}
\caption{The $G_6 = 3$ elements of $\SP_2$.}
\label{fig:SP2}

\end{figure}

Randrianarivony and Zeng~\cite{rand} proved the following Formula~for all $n \geq 1$~:

\begin{equation}
\label{eq:formuleRNpistols}
r_n = \sum_{f \in \SP_n} 2^{\ndf(f)}.
\end{equation}
For example, in the case $n=2$, we do obtain $r_2 = 2^2 + 2 + 2^2$ as seen in Figure \ref{fig:SP2}. We know from Dumont~\cite{Dumont2} that the surjective pistols form a combinatorial interpretation of the sequence of the Genocchi numbers $(G_{2k})_{k \geq 1} = (1,1,3,17,155,2073,\hdots)$~\cite{genocchi1}~: for all $n \geq 1$, the cardinality of $\SP_n$ equals $G_{2n+2}$.

Now, we are going to obtain (in Proposition~\ref{prop:cardinalSpDCselonT}) an analogous formula for the cardinality $\SpDC_{2n}$, in terms of the combinatorial objects defined as follows.

\begin{defi}
\label{defi:tableaux}
Let $\Tab_n$ be the set of tableaux $T$ made of $n$ columns and $2n$ rows, that contain $2n$ dots such that~:
\begin{itemize}
\item every row contains exactly one dot;
\item every column contains exactly two dots;
\item if there is a dot in the box $(j,i)$ of $T$, then $j \leq i$.
\end{itemize}

(This is in fact the Definition~of $\DC_n$, minus the condition that each box $(j,i)$ that contains a dot implies $i \leq j+n$.)

If a dot of $T$ is located in a box $(j,i)$ such that $i \geq 2n+1 - j$, we say that it is \textit{free} and we represent it by a star instead of a dot. Let $\fr(T)$ be the number of free dots of $T$.

\end{defi}

For example, in Figure~\ref{fig:T2} are depicted the 3 elements of $\Tab_2$, and their numbers of free dots are respectively 2, 1 and 2.

\begin{figure}[!htbp]

\begin{center}

\begin{tikzpicture}[scale=0.4]

\draw (0,0) grid[step=1] (2,4);
\draw (0,0) -- (2,2);
\draw [dashed] (0,4) -- (2,2);
\fill (0.5,0.5) circle (0.2);
\fill (0.5,1.5) circle (0.2);
\draw (1.5,2.5) node[scale=0.7]{$\bigstar$};
\draw (1.5,3.5) node[scale=0.7]{$\bigstar$};

\begin{scope}[shift={(3,0)}]
\draw (0,0) grid[step=1] (2,4);
\draw (0,0) -- (2,2);
\draw [dashed] (0,4) -- (2,2);
\fill (0.5,0.5) circle (0.2);
\fill (1.5,1.5) circle (0.2);
\fill (0.5,2.5) circle (0.2);
\draw (1.5,3.5) node[scale=0.7]{$\bigstar$};
\end{scope}

\begin{scope}[shift={(6,0)}]
\draw (0,0) grid[step=1] (2,4);
\draw (0,0) -- (2,2);
\draw [dashed] (0,4) -- (2,2);
\fill (0.5,0.5) circle (0.2);
\fill (1.5,1.5) circle (0.2);
\draw (1.5,2.5) node[scale=0.7]{$\bigstar$};
\draw (0.5,3.5) node[scale=0.7]{$\bigstar$};
\end{scope}

\end{tikzpicture}

\end{center}
\caption{The 3 elements of $\Tab_2$.}
\label{fig:T2}

\end{figure}

By considering the $\binom{n+1}{2} = (n+1)n/2$ possible locations of the two dots of the last column of any $T \in \Tab_n$ (from left to right), it is easy to obtain the induction formula $\# \T_n = \binom{n+1}{2} \# \Tab_{n-1}$ for all $n \geq 2$, and finally to compute
\begin{equation*}
\label{eq:cardinalT}
\# \Tab_n = (n+1)!n!/2^n.
\end{equation*}

\begin{prop}
\label{prop:cardinalSpDCselonT}
For all $n \geq 1$, we have
\begin{equation*}
\label{eq:cardinalSpDCselonT}
\# \SpDC_{2n} = \sum_{T \in \Tab_n} 2^{\fr(T)}.
\end{equation*}
\end{prop}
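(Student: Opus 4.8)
The plan is to set up an explicit bijection between $\SpDC_{2n}$ and the set of pairs $(T,E)$, where $T \in \Tab_n$ and $E$ is a subset of the free dots of $T$. Since a fixed $T$ admits exactly $2^{\fr(T)}$ such subsets, this immediately yields the claimed identity.

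First I would exploit the central symmetry to show that an element $S \in \SpDC_{2n}$ is completely determined by its $n$ leftmost columns. Because every row of $S$ carries exactly one dot and the reflection $(j,i) \mapsto (2n+1-j,\,4n+1-i)$ sends the left columns $\{1,\dots,n\}$ to the right columns $\{n+1,\dots,2n\}$, it exchanges, for each $i \in [2n]$, the dot of row $i$ with the dot of row $4n+1-i$; hence exactly one of the two rows $i,\,4n+1-i$ carries its dot in the left columns. This lets me fold the $4n$ rows of the left half of $S$ onto the $2n$ rows of a tableau $T$: for each pair $\{i,\,4n+1-i\}$ with $i \in [2n]$, I record in row $i$ of $T$ the column $j \le n$ of the unique left-half dot of that pair, and I place this pair in $E$ precisely when that dot sits in the upper row $4n+1-i$ rather than the lower row $i$. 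Checking that $T$ lands in $\Tab_n$ (one dot per row, two per column, using that each left column of $S$ has two dots lying in distinct pairs, and $j \le i$) is then routine.

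The crux is the compatibility between the Dellac inequality $j \le i' \le j+2n$ inside $S$ and the free/non-free dichotomy in $T$. For a dot of $T$ in box $(j,i)$, placing it in the lower row $i$ of $S$ always satisfies the inequality (the constraint reduces to $j \le i$, which holds in $\Tab_n$), whereas placing it in the upper row $4n+1-i$ is legal if and only if $4n+1-i \le j+2n$, that is $i \ge 2n+1-j$ — which is exactly the condition for $(j,i)$ to be a free dot. Thus non-free dots are forced into the lower row, while each free dot may independently go up or down, and the set $E$ of ``up'' choices ranges freely over all subsets of the free dots. This is the step I expect to require the most care, since it is where the upper bound in the Dellac condition is spent and where the factor $2^{\fr(T)}$ is produced.

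Finally I would verify that the reverse construction — place the dots of $T$ in the left half of a $2n \times 4n$ array according to $E$, then complete by central symmetry — always returns a genuine element of $\SpDC_{2n}$: the row condition holds because the chosen left rows form a transversal of the pairs $\{i,\,4n+1-i\}$ whose complement is filled by the reflected right-half dots; the column condition and the Dellac inequality are preserved under $(j,i)\mapsto(2n+1-j,\,4n+1-i)$; and symmetry holds by construction. Since the two constructions are mutually inverse, summing $2^{\fr(T)}$ over $T \in \Tab_n$ counts $\SpDC_{2n}$, as desired.
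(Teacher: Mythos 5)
Your proof is correct and is essentially the paper's own argument: both decompose each $S \in \SpDC_{2n}$ into a tableau $T \in \Tab_n$ together with an independent binary choice at each free dot, with the upper Dellac bound $i \le j+2n$ being exactly what pins down the non-free dots and liberates the free ones, yielding the factor $2^{\fr(T)}$. The only difference is presentational: the paper realizes the choices via commuting involutions $s_i$ that move dots horizontally between the left and right halves of the bottom $2n$ rows, whereas you fold vertically, recording which of the paired rows $i$ and $4n+1-i$ carries the left-column dot; by the central symmetry these encode the same data, so the two correspondences coincide.
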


\begin{proof}
Any $S \in \SpDC_{2n}$ can be partitioned as follows,
\begin{center}

\begin{tikzpicture}[scale=1]

\draw (-0.75,2) node[scale=1.1] {$S = $};

\draw (0,0) rectangle (2,4);
\draw (0,0) -- (2,2);
\draw (0,2) -- (2,4);
\draw (1,0) -- (1,4);
\draw (0,2) -- (2,2);
\draw [dashed] (1,1) -- (0,2);
\draw [dashed] (1,3) -- (2,2);

\draw (0.4,1) node[scale=1.4] {$X_S$};
\draw (1.6,3) node[scale=1.4] {$\tilde{X}_S$};
\draw (0.7,1.7) node[scale=1]{$Y_S$};
\draw (1.3,2.3) node[scale=1]{$\tilde{Y}_S$};
\draw (0.7,2.3) node[scale=1]{$\tilde{Z}_S$};
\draw (1.3,1.7) node[scale=1]{$Z_S$};


\end{tikzpicture}

\end{center}
where there are no dots in the blank areas, and where the area $\tilde{X}_S$ (respectively $\tilde{Y}_S$ and $\tilde{Z}_S$) is symmetrical to the area $X_S$ (respectively $Y_S$ and $Z_S$) following the center of $S$. Now, for any dot of $Y_S$ (respectively $Z_S$), say, located in the box $(j,i)$ with $2n \geq i \geq 2n+1-j$ (respectively $2n \geq i \geq j$), we can define a new configuration $s_i(S) \in \SpDC_{2n}$ by relocating this dot in the box $(2n+1-j,i)$ of $Z_S$ (respectively $Y_S$), and relocating the dot located in the box $(2n+1-j,4n+1-j)$ of $\tilde{Y}_S$ (respectively $\tilde{Z}_S$), in the box $(j,4n+1-i)$ of $\tilde{Z}_S$ (respectively $\tilde{Y}_S$). Thus, if $E_i$ is defined as the set of the configurations $S \in \SpDC_{2n}$ whose $i$-th row (from bottom to top) doesn't contain its dot in $X_S$, then it is clear that $s_i$ is an involution of $E_i$. Also, if $S \in E_{i_1} \cap E_{i_2}$ for some $(i_1,i_2) \in [n+1,2n]^2$, obviously $s_{i_1} \circ s_{i_2}(S) = s_{i_2} \circ s_{i_1}(S)$. Consequently, for all $S \in \SpDC_{2n}$, there exists one unique $T \in \Tab_{n}$ such that $S$ is obtained by applying a finite number of these involutions on the configuration $S_T \in \SpDC_{2n}$ defined by $Z_{S_T}$ and $\tilde{Z}_{S_T}$ being empty, and

\begin{center}

\begin{tikzpicture}[scale=1]

\draw (-0.75,1) node[scale=1.1] {$T = $};

\draw (0,0) rectangle (1,2);
\draw (1,1) [dashed] -- (0,2);
\draw (0,0) -- (1,1);

\draw (0.4,1) node[scale=1]{$X_{S_T}$};
\draw (0.7,1.7) node[scale=1]{$Y_{S_T}$};

\end{tikzpicture}

\end{center}
so that $S_T$ generates a total amount of $2^{\# Y_{S_T}}$ elements of $\SpDC_{2n}$, where $\# Y_{S_T}$ is the number of dots located in $Y_{S_T}$ (in other words $\# Y_{S_T} = \fr(T)$).
\end{proof}

For example, we depict in Figure~\ref{fig:generationSpDC4} how the $3$ elements of $\Tab_2$ generate the $10 = 2^2 + 2 + 2^2$ elements of $\SpDC_4$.

\begin{figure}[!htbp]

\begin{center}

\begin{tikzpicture}[scale=0.3]

\draw (0,0) grid[step=1] (2,4);
\draw (0,0) -- (2,2);
\draw [dashed] (0,4) -- (2,2);
\fill (0.5,0.5) circle (0.2);
\fill (0.5,1.5) circle (0.2);
\draw (1.5,2.5) node[scale=0.6]{$\bigstar$};
\draw (1.5,3.5) node[scale=0.6]{$\bigstar$};

\draw [gray,opacity = 0.7] (0,0) grid[step=1] (2,8);
\draw [gray,opacity = 0.7] (0,0) -- (2,2);
\draw [gray,opacity = 0.7] (0,4) -- (2,6);
\begin{scope}[shift={(4,8)},rotate=180]
\draw [gray,opacity = 0.7] (0,0) grid[step=1] (2,8);
\draw [gray,opacity = 0.7] (0,0) -- (2,2);
\draw [gray,opacity = 0.7] (0,4) -- (2,6);
\draw [gray,opacity = 0.7] [dashed] (0,4) -- (2,2);
\fill [gray,opacity = 0.7] (0.5,0.5) circle (0.2);
\fill [gray,opacity = 0.7] (0.5,1.5) circle (0.2);
\draw [gray,opacity = 0.7] (1.5,2.5) node[scale=0.6]{$\bigstar$};
\draw [gray,opacity = 0.7] (1.5,3.5) node[scale=0.6]{$\bigstar$};
\end{scope}

\begin{scope}[shift={(5,0)}]

\draw (0,0) grid[step=1] (2,4);
\draw (0,0) -- (2,2);
\draw [dashed] (0,4) -- (2,2);
\fill (0.5,0.5) circle (0.2);
\fill (0.5,1.5) circle (0.2);
\draw (1.5,2.5) node[scale=0.6]{$\bigstar$};
\draw (2.5,3.5) node[scale=0.6]{$\bigstar$};

\draw [gray,opacity = 0.7] (0,0) grid[step=1] (2,8);
\draw [gray,opacity = 0.7] (0,0) -- (2,2);
\draw [gray,opacity = 0.7] (0,4) -- (2,6);
\begin{scope}[shift={(4,8)},rotate=180]
\draw [gray,opacity = 0.7] (0,0) grid[step=1] (2,8);
\draw [gray,opacity = 0.7] (0,0) -- (2,2);
\draw [gray,opacity = 0.7] (0,4) -- (2,6);
\draw [gray,opacity = 0.7] [dashed] (0,4) -- (2,2);
\fill [gray,opacity = 0.7] (0.5,0.5) circle (0.2);
\fill [gray,opacity = 0.7] (0.5,1.5) circle (0.2);
\draw [gray,opacity = 0.7] (1.5,2.5) node[scale=0.6]{$\bigstar$};
\draw [gray,opacity = 0.7] (2.5,3.5) node[scale=0.6]{$\bigstar$};
\end{scope}

\end{scope}

\begin{scope}[shift={(0,-9)}]

\draw (0,0) grid[step=1] (2,4);
\draw (0,0) -- (2,2);
\draw [dashed] (0,4) -- (2,2);
\fill (0.5,0.5) circle (0.2);
\fill (0.5,1.5) circle (0.2);
\draw (2.5,2.5) node[scale=0.6]{$\bigstar$};
\draw (1.5,3.5) node[scale=0.6]{$\bigstar$};

\draw [gray,opacity = 0.7] (0,0) grid[step=1] (2,8);
\draw [gray,opacity = 0.7] (0,0) -- (2,2);
\draw [gray,opacity = 0.7] (0,4) -- (2,6);
\begin{scope}[shift={(4,8)},rotate=180]
\draw [gray,opacity = 0.7] (0,0) grid[step=1] (2,8);
\draw [gray,opacity = 0.7] (0,0) -- (2,2);
\draw [gray,opacity = 0.7] (0,4) -- (2,6);
\draw [gray,opacity = 0.7] [dashed] (0,4) -- (2,2);
\fill [gray,opacity = 0.7] (0.5,0.5) circle (0.2);
\fill [gray,opacity = 0.7] (0.5,1.5) circle (0.2);
\draw [gray,opacity = 0.7] (2.5,2.5) node[scale=0.6]{$\bigstar$};
\draw [gray,opacity = 0.7] (1.5,3.5) node[scale=0.6]{$\bigstar$};
\end{scope}

\end{scope}

\begin{scope}[shift={(5,-9)}]

\draw (0,0) grid[step=1] (2,4);
\draw (0,0) -- (2,2);
\draw [dashed] (0,4) -- (2,2);
\fill (0.5,0.5) circle (0.2);
\fill (0.5,1.5) circle (0.2);
\draw (2.5,2.5) node[scale=0.6]{$\bigstar$};
\draw (2.5,3.5) node[scale=0.6]{$\bigstar$};

\draw [gray,opacity = 0.7] (0,0) grid[step=1] (2,8);
\draw [gray,opacity = 0.7] (0,0) -- (2,2);
\draw [gray,opacity = 0.7] (0,4) -- (2,6);
\begin{scope}[shift={(4,8)},rotate=180]
\draw [gray,opacity = 0.7] (0,0) grid[step=1] (2,8);
\draw [gray,opacity = 0.7] (0,0) -- (2,2);
\draw [gray,opacity = 0.7] (0,4) -- (2,6);
\draw [gray,opacity = 0.7] [dashed] (0,4) -- (2,2);
\fill [gray,opacity = 0.7] (0.5,0.5) circle (0.2);
\fill [gray,opacity = 0.7] (0.5,1.5) circle (0.2);
\draw [gray,opacity = 0.7] (2.5,2.5) node[scale=0.6]{$\bigstar$};
\draw [gray,opacity = 0.7] (2.5,3.5) node[scale=0.6]{$\bigstar$};
\end{scope}

\end{scope}

\draw (-1,-10) rectangle (10,9);

\begin{scope}[shift={(3.5,10)}]

\draw (0,0) grid[step=1] (2,4);
\draw (0,0) -- (2,2);
\draw [dashed] (0,4) -- (2,2);
\fill (0.5,0.5) circle (0.2);
\fill (0.5,1.5) circle (0.2);
\draw (1.5,2.5) node[scale=0.6]{$\bigstar$};
\draw (1.5,3.5) node[scale=0.6]{$\bigstar$};

\end{scope}


\begin{scope}[shift={(11,0)}]

\draw (0,0) grid[step=1] (2,4);
\draw (0,0) -- (2,2);
\draw [dashed] (0,4) -- (2,2);
\fill (0.5,0.5) circle (0.2);
\fill (1.5,1.5) circle (0.2);
\fill (0.5,2.5) circle (0.2);
\draw (1.5,3.5) node[scale=0.6]{$\bigstar$};

\draw [gray,opacity = 0.7] (0,0) grid[step=1] (2,8);
\draw [gray,opacity = 0.7] (0,0) -- (2,2);
\draw [gray,opacity = 0.7] (0,4) -- (2,6);
\begin{scope}[shift={(4,8)},rotate=180]
\draw [gray,opacity = 0.7] (0,0) grid[step=1] (2,8);
\draw [gray,opacity = 0.7] (0,0) -- (2,2);
\draw [gray,opacity = 0.7] (0,4) -- (2,6);
\draw [gray,opacity = 0.7] [dashed] (0,4) -- (2,2);
\fill [gray,opacity = 0.7] (0.5,0.5) circle (0.2);
\fill [gray,opacity = 0.7] (1.5,1.5) circle (0.2);
\fill [gray,opacity = 0.7] (0.5,2.5) circle (0.2);
\draw [gray,opacity = 0.7] (1.5,3.5) node[scale=0.6]{$\bigstar$};
\end{scope}

\begin{scope}[shift={(0,-9)}]

\draw (0,0) grid[step=1] (2,4);
\draw (0,0) -- (2,2);
\draw [dashed] (0,4) -- (2,2);
\fill (0.5,0.5) circle (0.2);
\fill (1.5,1.5) circle (0.2);
\fill (0.5,2.5) circle (0.2);
\draw (2.5,3.5) node[scale=0.6]{$\bigstar$};

\draw [gray,opacity = 0.7] (0,0) grid[step=1] (2,8);
\draw [gray,opacity = 0.7] (0,0) -- (2,2);
\draw [gray,opacity = 0.7] (0,4) -- (2,6);
\begin{scope}[shift={(4,8)},rotate=180]
\draw [gray,opacity = 0.7] (0,0) grid[step=1] (2,8);
\draw [gray,opacity = 0.7] (0,0) -- (2,2);
\draw [gray,opacity = 0.7] (0,4) -- (2,6);
\draw [gray,opacity = 0.7] [dashed] (0,4) -- (2,2);
\fill [gray,opacity = 0.7] (0.5,0.5) circle (0.2);
\fill [gray,opacity = 0.7] (1.5,1.5) circle (0.2);
\fill [gray,opacity = 0.7] (0.5,2.5) circle (0.2);
\draw [gray,opacity = 0.7] (2.5,3.5) node[scale=0.6]{$\bigstar$};
\end{scope}

\end{scope}

\draw (-1,-10) rectangle (5,9);

\begin{scope}[shift={(1,10)}]

\draw (0,0) grid[step=1] (2,4);
\draw (0,0) -- (2,2);
\draw [dashed] (0,4) -- (2,2);
\fill (0.5,0.5) circle (0.2);
\fill (1.5,1.5) circle (0.2);
\fill (0.5,2.5) circle (0.2);
\draw (1.5,3.5) node[scale=0.6]{$\bigstar$};

\end{scope}

\end{scope}


\begin{scope}[shift={(17,0)}]

\draw (0,0) grid[step=1] (2,4);
\draw (0,0) -- (2,2);
\draw [dashed] (0,4) -- (2,2);
\fill (0.5,0.5) circle (0.2);
\fill (1.5,1.5) circle (0.2);
\draw (1.5,2.5) node[scale=0.6]{$\bigstar$};
\draw (0.5,3.5) node[scale=0.6]{$\bigstar$};

\draw [gray,opacity = 0.7] (0,0) grid[step=1] (2,8);
\draw [gray,opacity = 0.7] (0,0) -- (2,2);
\draw [gray,opacity = 0.7] (0,4) -- (2,6);
\begin{scope}[shift={(4,8)},rotate=180]
\draw [gray,opacity = 0.7] (0,0) grid[step=1] (2,8);
\draw [gray,opacity = 0.7] (0,0) -- (2,2);
\draw [gray,opacity = 0.7] (0,4) -- (2,6);
\draw [gray,opacity = 0.7] [dashed] (0,4) -- (2,2);
\fill [gray,opacity = 0.7] (0.5,0.5) circle (0.2);
\fill [gray,opacity = 0.7] (1.5,1.5) circle (0.2);
\draw [gray,opacity = 0.7] (1.5,2.5) node[scale=0.6]{$\bigstar$};
\draw [gray,opacity = 0.7] (0.5,3.5) node[scale=0.6]{$\bigstar$};
\end{scope}

\begin{scope}[shift={(0,-9)}]

\draw (0,0) grid[step=1] (2,4);
\draw (0,0) -- (2,2);
\draw [dashed] (0,4) -- (2,2);
\fill (0.5,0.5) circle (0.2);
\fill (1.5,1.5) circle (0.2);
\draw (2.5,2.5) node[scale=0.6]{$\bigstar$};
\draw (0.5,3.5) node[scale=0.6]{$\bigstar$};

\draw [gray,opacity = 0.7] (0,0) grid[step=1] (2,8);
\draw [gray,opacity = 0.7] (0,0) -- (2,2);
\draw [gray,opacity = 0.7] (0,4) -- (2,6);
\begin{scope}[shift={(4,8)},rotate=180]
\draw [gray,opacity = 0.7] (0,0) grid[step=1] (2,8);
\draw [gray,opacity = 0.7] (0,0) -- (2,2);
\draw [gray,opacity = 0.7] (0,4) -- (2,6);
\draw [gray,opacity = 0.7] [dashed] (0,4) -- (2,2);
\fill [gray,opacity = 0.7] (0.5,0.5) circle (0.2);
\fill [gray,opacity = 0.7] (1.5,1.5) circle (0.2);
\draw [gray,opacity = 0.7] (2.5,2.5) node[scale=0.6]{$\bigstar$};
\draw [gray,opacity = 0.7] (0.5,3.5) node[scale=0.6]{$\bigstar$};
\end{scope}

\end{scope}

\begin{scope}[shift={(5,0)}]

\draw (0,0) grid[step=1] (2,4);
\draw (0,0) -- (2,2);
\draw [dashed] (0,4) -- (2,2);
\fill (0.5,0.5) circle (0.2);
\fill (1.5,1.5) circle (0.2);
\draw (1.5,2.5) node[scale=0.6]{$\bigstar$};
\draw (3.5,3.5) node[scale=0.6]{$\bigstar$};

\draw [gray,opacity = 0.7] (0,0) grid[step=1] (2,8);
\draw [gray,opacity = 0.7] (0,0) -- (2,2);
\draw [gray,opacity = 0.7] (0,4) -- (2,6);
\begin{scope}[shift={(4,8)},rotate=180]
\draw [gray,opacity = 0.7] (0,0) grid[step=1] (2,8);
\draw [gray,opacity = 0.7] (0,0) -- (2,2);
\draw [gray,opacity = 0.7] (0,4) -- (2,6);
\draw [gray,opacity = 0.7] [dashed] (0,4) -- (2,2);
\fill [gray,opacity = 0.7] (0.5,0.5) circle (0.2);
\fill [gray,opacity = 0.7] (1.5,1.5) circle (0.2);
\draw [gray,opacity = 0.7] (1.5,2.5) node[scale=0.6]{$\bigstar$};
\draw [gray,opacity = 0.7] (3.5,3.5) node[scale=0.6]{$\bigstar$};
\end{scope}

\end{scope}

\begin{scope}[shift={(5,-9)}]

\draw (0,0) grid[step=1] (2,4);
\draw (0,0) -- (2,2);
\draw [dashed] (0,4) -- (2,2);
\fill (0.5,0.5) circle (0.2);
\fill (1.5,1.5) circle (0.2);
\draw (2.5,2.5) node[scale=0.6]{$\bigstar$};
\draw (3.5,3.5) node[scale=0.6]{$\bigstar$};

\draw [gray,opacity = 0.7] (0,0) grid[step=1] (2,8);
\draw [gray,opacity = 0.7] (0,0) -- (2,2);
\draw [gray,opacity = 0.7] (0,4) -- (2,6);
\begin{scope}[shift={(4,8)},rotate=180]
\draw [gray,opacity = 0.7] (0,0) grid[step=1] (2,8);
\draw [gray,opacity = 0.7] (0,0) -- (2,2);
\draw [gray,opacity = 0.7] (0,4) -- (2,6);
\draw [gray,opacity = 0.7] [dashed] (0,4) -- (2,2);
\fill [gray,opacity = 0.7] (0.5,0.5) circle (0.2);
\fill [gray,opacity = 0.7] (1.5,1.5) circle (0.2);
\draw [gray,opacity = 0.7] (2.5,2.5) node[scale=0.6]{$\bigstar$};
\draw [gray,opacity = 0.7] (3.5,3.5) node[scale=0.6]{$\bigstar$};
\end{scope}

\end{scope}

\draw (-1,-10) rectangle (10,9);

\begin{scope}[shift={(3.5,10)}]

\draw (0,0) grid[step=1] (2,4);
\draw (0,0) -- (2,2);
\draw [dashed] (0,4) -- (2,2);
\fill (0.5,0.5) circle (0.2);
\fill (1.5,1.5) circle (0.2);
\draw (1.5,2.5) node[scale=0.6]{$\bigstar$};
\draw (0.5,3.5) node[scale=0.6]{$\bigstar$};

\end{scope}

\end{scope}

\end{tikzpicture}

\end{center}
\caption{Generation of the $2^2 + 2 + 2^2$ elements of $\SpDC_4$ from the $3$ elements of $\Tab_2$.}
\label{fig:generationSpDC4}

\end{figure}

Now, Conjecture~\ref{conj:1} is a corollary of the following Theorem~in view of Formula~(\ref{eq:formuleRNpistols}) and Proposition~\ref{prop:cardinalSpDCselonT}.

\begin{theo}
\label{theo:bigeni}
There exists a surjective map $\varphi : \Tab_n \twoheadrightarrow \SP_n$ such that
\begin{equation}
\label{eq:lasommesurlesTdunmemeF}
\sum_{T \in \varphi^{-1}(f)} 2^{\fr(T)} = 2^{\ndf(f)}
\end{equation}
for all $f \in \SP_n$.
\end{theo}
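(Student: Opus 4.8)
My plan is to deduce the theorem from a single surjection at the level of the symplectic Dellac configurations, as the abstract suggests. The proof of Proposition~\ref{prop:cardinalSpDCselonT} already partitions $\SpDC_{2n}$ into \emph{flip-classes}: each $T \in \Tab_n$ generates the set $G(T) \subseteq \SpDC_{2n}$ of the $2^{\fr(T)}$ configurations obtained from $S_T$ by applying arbitrary subsets of the involutions $s_i$ (equivalently, by flipping an arbitrary subset of the free dots of $T$), and $\SpDC_{2n} = \bigsqcup_{T \in \Tab_n} G(T)$ with each $S$ lying in a \emph{unique} $G(T)$. I would first build a surjection $\Phi : \SpDC_{2n} \twoheadrightarrow \SP_n$ that is \emph{constant on every flip-class} $G(T)$ and whose fibers satisfy $\# \Phi^{-1}(f) = 2^{\ndf(f)}$. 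Defining $\varphi(T)$ to be the common value of $\Phi$ on $G(T)$ then yields a well-defined $\varphi : \Tab_n \to \SP_n$, and since $G(T) \subseteq \Phi^{-1}(\varphi(T))$ while each $S$ lies in a unique class, the fibers decompose as $\Phi^{-1}(f) = \bigsqcup_{T \in \varphi^{-1}(f)} G(T)$. Taking cardinalities gives $\sum_{T \in \varphi^{-1}(f)} 2^{\fr(T)} = \# \Phi^{-1}(f) = 2^{\ndf(f)}$, which is exactly~(\ref{eq:lasommesurlesTdunmemeF}), and surjectivity of $\varphi$ follows from that of $\Phi$. Thus the whole statement reduces to producing $\Phi$ with these three properties.

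Next I would define $\Phi$ explicitly. A symplectic Dellac configuration is determined by its lower half and its central band, hence essentially by the column function $c$ of the underlying tableau; the obstruction to reading this directly as a surjective pistol is that the Dellac condition $c(i) \le i$ is \emph{opposite} to the pistol condition $f(j) \ge j$, so a naive reflection of the rows fails, and it fails precisely on the free dots. I would therefore define $\Phi$ by reflecting the non-free skeleton of $S$ into a partial map, and then \emph{repairing} the excedance violations caused by the free dots by sliding them to their leftmost admissible columns, recording each repaired free dot as a bold image (a non-doubled fixed point) and each forced coincidence with an already-occupied value as an underlined image (a doubled fixed point). One checks directly that this rule ignores which free dots were flipped inside a class $G(T)$, so $\Phi$ is constant on flip-classes; and surjectivity is obtained by writing down, for each $f \in \SP_n$, an explicit right inverse, namely the preimage produced by running the repair procedure backwards on a canonical maximally-free configuration.

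The heart of the argument is the fiber count $\# \Phi^{-1}(f) = 2^{\ndf(f)}$, which I would prove by exhibiting a partition of the Boolean cube $\{0,1\}^{\mathrm{NDF}(f)}$ into subcubes indexed by $T \in \varphi^{-1}(f)$, the subcube attached to $T$ having dimension $\fr(T)$ and being identified with the flip-class $G(T)$; summing $2^{\fr(T)}$ over such a partition telescopes to $2^{\ndf(f)}$. Concretely, for a fixed $f$ one must separate the non-doubled fixed points that a preimage may still flip from those whose value is pinned by an earlier coincidence, the latter choices enumerating the tableaux of the fiber and the former indexing their residual flippable free dots. The main obstacle—and the genuinely delicate combinatorial core—is the bookkeeping of the \emph{doubled} fixed points: I must show that the repair step always lands in $\SP_n$ (that the slid configuration indeed satisfies $f(j) \ge j$ and is surjective onto the even values), and that each doubled fixed point of $f$ corresponds to exactly one collapsed free dimension, so that the subcubes $G(T)$ tile $\{0,1\}^{\mathrm{NDF}(f)}$ with neither overlap nor gap. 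I would establish this tiling by induction, peeling off the top value $2n$ (which is always attained and never doubled) and exploiting the recursive structure of $\Tab_n$ and of $\SP_n$ in parallel; note that, since a cube partitioned into more than one subcube contains no full-dimensional piece, a fiber with several tableaux automatically has $\fr(T) < \ndf(f)$ for all its members, which is the structural fact the induction must reproduce.
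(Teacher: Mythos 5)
Your proposal contains no proof: both of its pivots are restatements of the theorem rather than reductions of it. First, since each $S \in \SpDC_{2n}$ lies in a unique flip-class $G(T)$ (this is exactly the content of the proof of Proposition~\ref{prop:cardinalSpDCselonT}), a map $\Phi : \SpDC_{2n} \rightarrow \SP_n$ that is constant on flip-classes is the same data as a map $\varphi : \Tab_n \rightarrow \SP_n$, and one automatically has $\# \Phi^{-1}(f) = \sum_{T \in \varphi^{-1}(f)} 2^{\fr(T)}$; so demanding $\# \Phi^{-1}(f) = 2^{\ndf(f)}$ is word for word Formula~(\ref{eq:lasommesurlesTdunmemeF}). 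Second, a tiling of the cube $\{0,1\}^{\ndf(f)}$ by subcubes of dimension $\fr(T)$, $T \in \varphi^{-1}(f)$, is a geometric paraphrase of the same identity (take cardinalities); asserting that such a tiling exists and ``must be established by induction'' is asserting the theorem. The observation you extract from the picture (a fiber with several tableaux has $\fr(T) < \ndf(f)$ for all its members) is a consequence of the theorem, not an available structural input, so invoking it as ``the fact the induction must reproduce'' is circular. Everything therefore hangs on the two constructions you sketch, and neither is carried out.

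Moreover, the sketched definition of $\Phi$ — reflect the non-free skeleton, then repair excedance violations by sliding each free dot to its leftmost admissible column — would not produce the required map, because the correct assignment is not local. In the paper's construction, the even value attached to the dots of a column $C_j^T$ is the arrival of a $T$-path (Definition~\ref{defi:TPath}): a dot lying in a row $i \in [n+1,n+j-1]$ must be chased through the earlier columns, alternately through their lower and upper dots, and it is only the bijectivity of the resulting map $\pi_j^T$ (Proposition-Definition~\ref{prop:TPathstationnaire}) that makes $\varphi(T)$ a surjective pistol (Lemma~\ref{lem:ouestdmin}); on top of the values, the type and parity labels of Algorithm~\ref{algo:pistollabeling} are needed to decide which of $f(2j-1)$, $f(2j)$ receives which value and to handle the $\alpha_0$ degeneracies. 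Likewise, whether $2i$ is a doubled fixed point of $\varphi(T)$ is read off from a genuinely non-obvious property of $T$ (grounded dots, Proposition~\ref{prop:egalitedesstats}), not ``recorded'' during a repair pass. Finally, the fiber count requires the identity $\fr(T) + \# \mathcal{S}(T) + \# \mathcal{C}(T) = \ndf(f)$ (Remark~\ref{rem:sommedefreeplusSplusCegalegrounded}), the operations $S_{\mu}$ and $M_{j_0,\gamma}$ (Algorithms~\ref{algo:switch} and~\ref{algo:mute}) with their existence, uniqueness and invariance properties (Propositions~\ref{prop:existenceetuniciteduswitch} and~\ref{prop:lemuteverifielesconditions}, Lemmas~\ref{lem:sijalorsminpourtoutlemonde} and~\ref{cor:sifreechezlunetpaschezlautrealorschangementlabel}), and the induction of Proposition~\ref{prop:egalitetableauxpistolets} — an induction which is not on $n$ (``peeling off the top value $2n$'') but on the ordered list of columns at which fibers contain twin dots. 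Absent a concrete $\Phi$, your induction on $n$ cannot even be formulated, since one cannot say how fibers over $\SP_n$ restrict to fibers over $\SP_{n-1}$.
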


The rest of this paper aims at proving Theorem~\ref{theo:bigeni}, and is organized as follows. In Section \ref{sec:Tpaths}, we introduce the $j$-tableaux (a generalization of the tableaux $T \in \T_n$), on which we define a family of paths, namely, the $T$-paths. In Section \ref{sec:fromtableauxtopistols}, we use these paths to define the pistol labeling of a tableau (in Algorithm \ref{algo:pistollabeling}), which produces (in Definition~\ref{defi:varphi}) the Definition~of $\varphi$. In Section \ref{sec:injection}, we define the notion of $(f,j)$-insertion of a dot into a $j$-tableau, which allows to formulate Algorithm \ref{algo:insertionlabeling} and produces the Definition~of a map $\phi : \SP_n \rightarrow \T_n$. In Section \ref{sec:connexion}, we first prove that $\varphi \circ \phi$ is the identity map of $\SP_n$ (hence $\phi : \SP_n \rightarrow \T_n$ is injective and $\varphi : \T_n \rightarrow \SP_n$ is surjective), then we make the image $\phi(\SP_n) \subset \T_n$ of $\phi$ explicit, and we prove that $\phi \circ \varphi_{|\phi(\SP_n)}$ is the identity map of this set. Finally, in Section \ref{sec:proof}, we finish the proof of Theorem~\ref{theo:bigeni}, \textit{i.e.}, we show that Formula~(\ref{eq:lasommesurlesTdunmemeF}) is true for all $f \in \SP_n$. To do so, we make $\varphi^{-1}(f)$ explicit by defining Algorithm \ref{algo:switch} and Algorithm \ref{algo:mute}, which allow to construct every element of $\varphi^{-1}(f)$ from one given element of it (like $\phi(f)$).

\section{$j$-tableaux and $T$-paths}
\label{sec:Tpaths}

\begin{defi}
\label{defi:partialtableaux}
Let $j \in [n]$, a \textit{$j$-tableau} $T \in \Tab_n^j$ is a tableau made of $n$ columns (denoted by $C_1^T,C_2^T,\hdots, C_n^T$ from left to right) and $2n$ rows (denoted by $R_1^T,R_2^T,R_n^T,R_{2n-1}^T, R_{2n-2}^T, \hdots, R_{n+1}^T, R_{2n}^T$ from bottom to top), that contain between $2j$ and $2n$ dots above the line $y = x$ (for all $i \in [2n]$, if the row $R_i^T$ contains a dot, it is denoted by $d_i^T$) such that :
\begin{itemize}
\item each column $C_1^T,C_2^T,\hdots,C_{j-1}^T$ contains exactly two dots and the other columns contain at most two dots;
\item each row $R_1^T,R_2^T,\hdots,R_{j-1}^T$ contains exactly one dot and the other rows contain at most one dot.
\end{itemize} 
\end{defi}

In particular, a tableau $T \in \Tab_n$ is also a $j$-tableau for all $j \in [n]$.

\begin{defi}
\label{defi:TPath}
Let $j \in [n]$, $T \in \Tab_n^j$ and $i \in [j,2n]$ such that the intersection of the row $R_i^T$ with the columns $C_1^T,C_2^T,\hdots,C_{j-1}^T$ is empty. The \textit{$T$-path from the box $C_j^T \cap R_i^T$} is the sequence $(i_0,i_1,\hdots) \in [2n]^{\mathbb{N}}$ defined by $i_0 = i$, and, for all $k \in \mathbb{N}$, by the following rules.

\begin{enumerate}
\item If $i_k \in [j,n] \sqcup [n+j,2n]$, then $i_{k+1} = i_k$.
\item If $i_k$ is of the kind $n+j_k$ with $j_k \in [j-1]$, then $d_{i_{k+1}}^T$ is defined as the upper dot of the column $C_{j_k}^T$.
\item Otherwise $i_k \in [j-1]$, and $d_{i_{k+1}}^T$ is defined as the lower dot of the column $C_{i_k}^T$.
\end{enumerate}

\end{defi}

\begin{rem}
In the context of Definition~\ref{defi:TPath}, the $T$-path from the box $C_j^T \cap R_i^T$ becomes stationary if and only if $i_k \in [j,n] \sqcup [n+j,2n]$ for $k$ big enough.
\end{rem}

\begin{propdef}
\label{prop:TPathstationnaire}
With the notations of Definition~\ref{defi:TPath}, the $T$-path $(i_0,i_1,\hdots)$ from the box $C_j^T \cap R_i^T$ becomes stationary, \textit{i.e.}, the integer $i_k$ belongs to the set $[j,n] \sqcup [n+j,2n]$ for $k$ big enough. This integer is said to be the \underline{arrival} of this $T$-path. Also, let $\pi_j^T$ be the map that maps every integer $i \in [j,2n]$ such that the first $j-1$ boxes of $R_i^T$ are empty, to the arrival of the $T$-path from the box $C_j^T \cap R_i^T$. Then $\pi_j^T$ is bijective.
\end{propdef}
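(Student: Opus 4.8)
The plan is to recast the path dynamics as the forward orbits of a single partial map and then to read off both the stationarity and the bijectivity from elementary in/out-degree bookkeeping, with no monovariant or explicit inverse needed. First I would record two geometric facts about $T\in\Tab_n^j$. Since a dot in box $(c,i)$ must satisfy $c\le i$ (height measured from the bottom) and the labelling places $R_1^T,\dots,R_{j-1}^T$ at the $j-1$ bottommost rows, the unique dot of each of these rows lies in one of the columns $C_1^T,\dots,C_{j-1}^T$; moreover, by the definition of a $j$-tableau, each of $C_1^T,\dots,C_{j-1}^T$ carries exactly two dots and therefore has a well-defined upper and lower dot. These are the only inputs about $T$ that I will use.

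Next I would introduce the transition map $g$ on the active set $A=[1,j-1]\sqcup[n+1,n+j-1]$ read off from rules (2)--(3): $g$ sends $i\in[1,j-1]$ to the label of the lower dot of $C_i^T$, and sends $n+j'\in[n+1,n+j-1]$ to the label of the upper dot of $C_{j'}^T$, while on the stationary set $B=[j,n]\sqcup[n+j,2n]$ no rule applies. Thus the $T$-path from $C_j^T\cap R_i^T$ is exactly the forward $g$-orbit of $i$, halting on its first entry into $B$, and its arrival is the first element of $B$ it meets. The structural heart is that $g$ is injective: lower dots of distinct low columns are distinct, upper dots of distinct low columns are distinct, and a lower dot never equals an upper dot of the same column. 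Dually, a label $v$ receives a $g$-edge exactly when $d_v^T$ exists and lies in some $C_c^T$ with $c\le j-1$, and then from the single such $c$, as its lower dot (rule 3) or as its upper dot (rule 2); hence every vertex has in-degree at most $1$. The functional digraph of $g$ --- out-degree $1$ on $A$, out-degree $0$ on $B$, and in-degree $\le 1$ everywhere --- is therefore a disjoint union of simple directed paths and simple directed cycles.

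Stationarity is then immediate: starting from any source one follows the unique out-edges and cannot enter a cycle, since the entry vertex of a cycle would have in-degree $2$; so after at most $\#A$ steps the orbit lands in a sink of $B$ and is constant thereafter. For bijectivity I would match the domain of $\pi_j^T$ with the sources (in-degree-$0$ vertices) and its codomain $B$ with the sinks (out-degree-$0$ vertices, which are exactly the points of $B$). A vertex $v$ has in-degree $0$ if and only if $d_v^T$ is absent or lies in a column $\ge j$, i.e. if and only if the first $j-1$ boxes of $R_v^T$ are empty; and by the first geometric fact no vertex of $[1,j-1]$ can be a source, so every source lies in $[j,2n]$, matching the domain exactly. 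Since in a disjoint union of paths and cycles each path contributes exactly one source and one sink while cycles contribute neither, the assignment sending a source to the sink of its path component is a bijection, and this assignment is precisely $\pi_j^T$.

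The main obstacle I anticipate is purely the bookkeeping across the three coordinate systems in play --- the labels $i$, the geometric heights, and the column indices used by rules (2)--(3) --- together with the clean verification that a single dot of a low column is either its upper or its lower dot but not both, since this one dichotomy simultaneously yields the injectivity of $g$ and the in-degree bound on which everything rests. Once this is settled, stationarity and bijectivity fall out of the paths-and-cycles normal form without any further computation.
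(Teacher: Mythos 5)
Your proposal is correct, and it runs on the same engine as the paper's proof: the one-step transition given by rules (2)--(3) is injective --- the paper expresses this by saying the sequence $(i_k)$ is ``reversible'', you by the in-degree bound in the functional digraph --- and the admissible starting rows (those whose first $j-1$ boxes are empty) are exactly the vertices with no incoming edge. Where you genuinely diverge is in how bijectivity of $\pi_j^T$ is concluded. The paper extracts only injectivity from reversibility and then gets surjectivity by a cardinality count: among the $2n-j+1$ rows $R_j^T,\ldots,R_{2n}^T$, exactly $j-1$ carry their dot in one of the first $j-1$ columns, so the domain of $\pi_j^T$ has $2(n-j+1)$ elements, which equals $\#\left([j,n]\sqcup[n+j,2n]\right)$, and an injection between finite sets of equal cardinality is a bijection. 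You instead read both injectivity and surjectivity off the normal form of the digraph (disjoint union of simple paths and cycles): every element of $[j,n]\sqcup[n+j,2n]$ is the sink of a unique path component, and walking that component backward ends at a source, i.e.\ at an element of the domain --- in effect you construct $(\pi_j^T)^{-1}$ explicitly rather than counting. Both finishes are valid; yours dispenses with the dot-counting computation and makes surjectivity constructive, while the paper's avoids formalizing the digraph decomposition. One small point of phrasing: your remark that an orbit ``cannot enter a cycle since the entry vertex would have in-degree $2$'' is cleanest if stated as: the component of a source cannot be a cycle (every cycle vertex has in-degree $1$), hence it is a simple path terminating in a sink --- but this is exactly the decomposition you had already established, so nothing is actually missing.
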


\begin{proof}
Suppose that $i_k \not\in [j,n] \sqcup [n+j,2n]$ for all $k \geq 0$. Since $[2n]$ is a finite set, and because $(i_k)_{k \geq 0}$ is defined by induction, there exists $0 \leq k_1 < k_2$ such that $i_{k_1} = i_{k_2}$. Now, Rule (1) of Definition~\ref{defi:TPath} is never applied, so the sequence $(i_k)_{k \geq 0}$ is reversible : for all $k > 0$, let $j_{k-1} \in [n]$ such that $d_{i_k}^T \in C_{j_{k-1}}^T$ ; if $d_{i_k}^T$ is the upper dot of $C_{j_k}^T$, then $i_{k-1} = n+j_{k-1}$, otherwise $i_{k-1} = j_{k-1}$. Consequently, the equality $i_{k_1} = i_{k_2}$ implies $i = i_0 = i_{k_2-k_1}$. Since $k_2 - k_1 > 0$ and, for all $k > 0$, the dot $d_{i_k}^T$ belongs to a column $C_{j_{k-1}}^T$ for some $j_{k-1} \in [j-1]$, then $d_i^T$ cannot belong to $C_j^T$, which is absurd. So $i_k \in [j,n] \sqcup [n+j,2n]$ for $k$ big enough. 

Let $k_{\min}$ be the smallest integer $k \geq 0$ such that $i_k$ is the arrival of the $T$-path. As stated before, the sequence $(i_0,i_1,\hdots,i_{k_{\min}})$ is reversible because it never involves Rule (1) of Definition~\ref{defi:TPath}, so the application $\pi_j^T$ is injective. Finally, the number of integers $i \in [j,2n]$ such that the first $j-1$ boxes of $R_i^T$ are empty, is exactly $2(n-j+1) = \#([j,n] \sqcup [n+j,2n])$~: by definition~of $\Tab_j^n$, the first $j-1$ rows of $T$ contain exactly $j-1$ dots, and the first $j-1$ columns of $T$ contain exactly $2(j-1)$ dots, so, among the $2n-j+1$ rows $R_j^T,R_{j+1}^T,\hdots,R_{2n}^T$, exactly $j-1$ of them contain their dot in one of their $j-1$ first boxes. So $\pi_j^T$ is bijective.
\end{proof}

\begin{rem}
The fixed points of $\pi_j^T$ are the integers $i \in [j,n] \sqcup [j+n,2n]$ such that the first $j-1$ boxes of $R_i^T$ are empty.
\end{rem}

For example (in this case $n = 7$ and $j = 4$), consider the $4$-tableau $T_0 \in \Tab_7^4$ which appears in Figure~\ref{fig:T0path}. In this example the columns $C_4^{T_0},C_5^{T_0},C_6^{T_0},C_7^{T_0}$ are empty. The set of integers $i \in [j,n] \sqcup [n+j,2n] = [4,7] \sqcup [11,14]$ such that the $j-1 = 3$ first boxes of $R_i^{T_0}$ are empty, is $\left\{4,5,7,8,9,11,12,13\right\}$. For all $i \in \left\{4,5,7,11,12,13\right\}$, we obtain $\pi_4^{T_0}(i) = i$ because $i \in [j,n] \sqcup [n+j,2n]$. In Figure~\ref{fig:T0path}, we show that the $T_0$-path from the box $C_4^{T_0} \cap R_8^{T_0}$ (respectively $C_4^{T_0} \cap R_9^{T_0}$) is the sequence $(i_k)_{k \geq 0} = (8,2,10,6,6,6,\hdots)$, which becomes stationary at $i_3 = 6$, element of $[j,n] \sqcup [n+j,2n]$, hence $\pi_4^{T_0}(8) = 6$ (respectively the sequence $(i'_k)_{k \geq 0} = (9,14,14,14,\hdots)$, which becomes stationary at $i'_1 = 14 \in [j,n] \sqcup [n+j,2n]$, hence $\pi_4^{T_0}(9) = 14$). As a summary, we obtain 
$$\pi_4^{T_0} = \begin{pmatrix}
4 & 5 & 7 & 8 & 9 & 11 & 12 & 13\\
4 & 5 & 7 & 6 & 14 & 11 & 12 & 13
\end{pmatrix}.$$ 

\begin{figure}[!htbp]

\begin{center}

\begin{tikzpicture}[scale=0.455]

\draw (-2,7) node[scale=1]{$T_0 = $};

\draw (0,0) grid[step=1] (7,14);
\draw (0,0) -- (7,7);
\draw (0,14) [dashed] -- (7,7);
\fill (0.5,0.5) circle (0.2);
\fill (0.5,1.5) circle (0.2);
\fill (1.5,10.5) circle (0.2);
\draw (1.5,13.5) node[scale=0.6]{$\bigstar$};
\fill (2.5,2.5) circle (0.2);
\fill (2.5,5.5) circle (0.2);

\draw (-0.5,0.5) node[scale=0.7]{$1$};
\draw (-0.5,1.5) node[scale=0.7]{$2$};
\draw (-0.5,2.5) node[scale=0.7]{$3$};
\draw (-0.5,3.5) node[scale=0.7]{$4$};
\draw (-0.5,4.5) node[scale=0.7]{$5$};
\draw (-0.5,5.5) node[scale=0.7]{$6$};
\draw (-0.5,6.5) node[scale=0.7]{$7$};
\draw (-0.5,12.5) node[scale=0.7]{$8$};
\draw (-0.5,11.5) node[scale=0.7]{$9$};
\draw (-0.5,10.5) node[scale=0.7]{$10$};
\draw (-0.5,9.5) node[scale=0.7]{$11$};
\draw (-0.5,8.5) node[scale=0.7]{$12$};
\draw (-0.5,7.5) node[scale=0.7]{$13$};
\draw (-0.5,13.5) node[scale=0.7]{$14$};

\draw [fill=gray,opacity=0.25] (0,12) rectangle (1,13);
\draw [fill=gray,opacity=0.25] (1,11) rectangle (2,12);
\draw [fill=gray,opacity=0.25] (2,10) rectangle (3,11);
\draw [fill=gray,opacity=0.25] (3,9) rectangle (4,10);
\draw [fill=gray,opacity=0.25] (4,8) rectangle (5,9);
\draw [fill=gray,opacity=0.25] (5,7) rectangle (6,8);

\draw [fill=gray,opacity=0.25] (0,0) rectangle (1,1);
\draw [fill=gray,opacity=0.25] (1,1) rectangle (2,2);
\draw [fill=gray,opacity=0.25] (2,2) rectangle (3,3);
\draw [fill=gray,opacity=0.25] (3,3) rectangle (4,4);
\draw [fill=gray,opacity=0.25] (4,4) rectangle (5,5);
\draw [fill=gray,opacity=0.25] (5,5) rectangle (6,6);

\draw [fill=blue,opacity=0.25] (3,3) rectangle (4,7);
\draw [fill=red,opacity=0.25] (3,7) rectangle (4,10);
\draw [fill=red,opacity=0.25] (3,13) rectangle (4,14);

\draw [very thick] (3.5,12.5) [densely dashed] -- (0.5,12.5);
\draw [very thick] (0.5,12.5) [densely dashed] -- (0.5,1.5);
\draw [very thick] (0.5,1.5) [densely dashed] -- (1.5,1.5);
\draw [very thick] (1.5,1.5) [densely dashed] -- (1.5,10.5);
\draw [very thick] (1.5,10.5) [densely dashed] -- (2.5,10.5);
\draw [very thick] (2.5,10.5) [densely dashed] -- (2.5,5.5);
\draw [very thick] (2.5,5.5) [densely dashed] -- (3.5,5.5);
\draw (3.5,5.5) circle (0.7);

\draw [very thick] (3.5,11.5) [densely dashed] -- (1.5,11.5);
\draw [very thick] (1.5,11.5) [densely dashed] -- (1.5,13.5);
\draw [very thick] (1.5,13.5) [densely dashed] -- (3.5,13.5);
\draw (3.5,13.5) circle (0.7);

\draw (8,12.5) node[scale=0.8]{$i_0 = 8$};
\draw (8,1.5) node[scale=0.8]{$i_1 = 2$};
\draw (8.15,10.5) node[scale=0.8]{$i_2 = 10$};
\draw (8,5.5) node[scale=0.8]{$i_3 = 6$};
\draw (8,11.5) node[scale=0.8]{$i'_0 = 9$};
\draw (8.15,13.5) node[scale=0.8]{$i'_1 = 14$};

\end{tikzpicture}

\end{center}

\caption{$T_0$-paths $(8,2,10,6,6,6,\hdots)$ and $(9,14,14,14,\hdots)$ from the respective boxes $C_4^{T_0} \cap R_8^{T_0}$ and $C_4^{T_0} \cap R_9^{T_0}$.}
\label{fig:T0path}

\end{figure}

\section{From the tableaux to the surjective pistols}

\label{sec:fromtableauxtopistols}

\subsection{Pistol labeling of a tableau}
\label{sec:pistollabeling}

Let $T \in \Tab_n$. We consider a vectorial version of the statistic of free dots $\fr : \Tab_n \rightarrow [n]$, through $\overrightarrow{\fr} : \Tab_n \rightarrow \left\{0,1\right\}^n$ defined by $$\overrightarrow{\fr}(T) = [\fr_1(T),\fr_2(T),\hdots,\fr_n(T)]$$ where $\fr_i(T) = 1$ if and only if the dot $d_{n+i}^T$ is free.

We are going to give (in Algorithm \ref{algo:pistollabeling}) three labels to every dot of~$T$~:
\begin{itemize}
\item a \textit{digital} label, \textit{i.e.}, an element of $[0,n-1]$;
\item a \textit{type} label, \textit{i.e.}, either the letter $\alpha$ or $\beta$;
\item a \textit{parity} label, \textit{i.e.}, either the letter $o$ (for \textit{odd}) or $e$ (for \textit{even}).
\end{itemize}

If a dot $d$ is labeled with the type label $t \in \left\{\alpha,\beta\right\}$, the digital label $h \in [0,n-1]$ and the parity label $p \in \left\{o,e\right\}$, we denote the data of these three labels by $t_h^p$, and we name it the \textit{pistol label} of $d$. Sometimes, we will also write that $d$ is labeled with $t_h$ if we know its digital label $h$ and its type label $t$ but not its parity label.

\begin{defi}
\label{defi:twindots}
Let $T \in \Tab_n$ and $i \in [n]$. The dots $d_i^T$ and $d_{n+i}^T$ are said to be \textit{twin dots}. Let $j_1$ and $j_2$ such that $d_i^T \in C_{j_1}^T$ and $d_{n+i}^T \in C_{j_2}^T$. The dot $d_{i,\min}^T$ is defined as $d_i^T$ if $j_1 \leq j_2$, as $d_{n+i}^T$ otherwise.
\end{defi}

\begin{algo}[pistol labeling of a tableau]
\label{algo:pistollabeling}
For $j$ from $n$ down to $1$, assume that each of the $2(n-j)$ dots of the columns $C_{j+1}^T,\hdots,C_n^T$ have already received its pistol label. At this step, in the parts I., II. and III., we give every dot of $C_j^T$ its digital, type and parity label respectively.

\begin{enumerate}[label=\Roman*.]

\item \underline{The digital labels.} For all $i \in [j,2n]$, if the dot $d_i^T$ belongs to $C_j^T$, let $i' = \pi_j^T(i) \in [j,n] \sqcup [n+j,2n]$. We define the digital label of $d_i^T$ as $i'-j$ if $i' \in [j,n]$, as $i'-n-j$ if $i' \in [n+j,2n]$.

\item \underline{The type labels.} For all $i \in [j,2n]$, if the dot $d_i^T$ belongs to $C_j^T$, let $h \in [0,n-j]$ be its digital label. We consider $j' = j+h \in [j,n]$, and $i' = \pi_j^T(i) \in \left\{j',n+j'\right\}$.

\begin{enumerate}[label = \arabic* -]

\item Assume first that $j'>j$. By hypothesis, the two dots of $C_{j'}^T$ have already received their pistol labels. If they have different type labels, we define $(\gamma,\bar{\gamma})$ as $(\alpha,\beta)$, otherwise we define $(\gamma,\bar{\gamma})$ as $(\beta,\alpha)$. 
\begin{enumerate}[label=\alph*)]
\item If one of the dots of $C_{j'}^T$ is labeled with $\beta_0^e$, then we define the type label of $d_i^T$ as $\alpha$ if $i'=j'$, as $\beta$ if $i' =n+j'$.
\item Otherwise, we define the type label of $d_i^T$ as $\gamma$ if $d_{i'}^T = d_{j',\min}^T$, as $\bar{\gamma}$ otherwise.
\end{enumerate}

\item If $j'=j$, let $d \neq d_i^T$ be the other dot of $C_j^T$.

\begin{enumerate}[label=\alph*)]
\item If the digital label of $d$ is $0$, then we define the type label of $d_i^T$ as $\alpha$ if $i'=j$, as $\beta$ if $i'=n+j$.

\item Otherwise, the type label $t$ of $d$ has already been defined by Rule II.1- of this algorithm.

\begin{enumerate}[leftmargin=0.3cm,label=\roman*.]
\item If $t = \alpha$, we define the type label of $d_i^T$ as $\alpha$ if $i \neq i'$ and $i'=j$, as $\beta$ otherwise.

\item If $t = \beta$, we define the type label of $d_i^T$ as $\alpha$ if $d_{i'}^T = d_{j,\min}^T$, as $\beta$ otherwise.
\end{enumerate}

\end{enumerate}
\end{enumerate}

\item \underline{The parity labels.} Let $h_1 \leq h_2$ be the digital labels of the dots of $C_j^T$.

\begin{enumerate}[label=\arabic* -]
\item If the type labels of the dots of $C_j^T$ are different, we label with $o$ the dot whose type label is $\alpha$, and with $e$ the dot whose type label is $\beta$.
\item Otherwise, it is necessary that $h_1 \neq h_2$ (if $h_1 = h_2$, then the type labels of the dots of $C_j^T$ are defined by Rule II.1- or Rule II.2-a) of this algorithm, and in both case the type labels are different).
\begin{enumerate}[label=\alph*)]
\item If the type label of the dots of $C_j^T$ is $\alpha$, we label with $e$ the dot whose digital label is $h_1$, and with $o$ the other dot.
\item If they both have the type label $\beta$, we label with $o$ the dot whose digital label is $h_1$, and with $e$ the other dot.
\end{enumerate}
\end{enumerate}
\end{enumerate}

\end{algo}

For example, we depict in Figure \ref{fig:T0} the pistol labeling of a tableau $T_1 \in \Tab_7$. On the left of this figure appears the tableau $T_1$ per se (and we specified on the left the indices of its rows, and on the right its vector statistic $\overrightarrow{\fr}(T_1) = [1,1,0,0,1,1,1]$) ; on the right appears its pistol-labeled version. The details of this pistol labeling are given in Appendix \ref{annex:pistollabelingT1}.

\begin{figure}[!htbp]

\begin{center}

\begin{tikzpicture}[scale=0.5]

\draw (0,0) grid[step=1] (7,14);
\draw (0,0) -- (7,7);
\draw (0,14) [dashed] -- (7,7);
\fill (0.5,0.5) circle (0.2);
\fill (0.5,10.5) circle (0.2);
\fill (1.5,1.5) circle (0.2);
\fill (1.5,3.5) circle (0.2);
\fill (2.5,2.5) circle (0.2);
\fill (2.5,5.5) circle (0.2);
\fill (3.5,4.5) circle (0.2);
\fill (3.5,9.5) circle (0.2);
\fill (4.5,6.5) circle (0.2);
\draw (4.5,12.5) node[scale=0.6]{$\bigstar$};
\draw (5.5,8.5) node[scale=0.6]{$\bigstar$};
\draw (5.5,13.5) node[scale=0.6]{$\bigstar$};
\draw (6.5,7.5) node[scale=0.6]{$\bigstar$};
\draw (6.5,11.5) node[scale=0.6]{$\bigstar$};

\draw (-0.5,0.5) node[scale=0.7]{$1$};
\draw (-0.5,1.5) node[scale=0.7]{$2$};
\draw (-0.5,2.5) node[scale=0.7]{$3$};
\draw (-0.5,3.5) node[scale=0.7]{$4$};
\draw (-0.5,4.5) node[scale=0.7]{$5$};
\draw (-0.5,5.5) node[scale=0.7]{$6$};
\draw (-0.5,6.5) node[scale=0.7]{$7$};
\draw (-0.5,12.5) node[scale=0.7]{$8$};
\draw (-0.5,11.5) node[scale=0.7]{$9$};
\draw (-0.5,10.5) node[scale=0.7]{$10$};
\draw (-0.5,9.5) node[scale=0.7]{$11$};
\draw (-0.5,8.5) node[scale=0.7]{$12$};
\draw (-0.5,7.5) node[scale=0.7]{$13$};
\draw (-0.5,13.5) node[scale=0.7]{$14$};

\draw (7.5,13.5) node[scale=0.7]{$1$};
\draw (7.5,12.5) node[scale=0.7]{$1$};
\draw (7.5,11.5) node[scale=0.7]{$1$};
\draw (7.5,10.5) node[scale=0.7]{$0$};
\draw (7.5,9.5) node[scale=0.7]{$0$};
\draw (7.5,8.5) node[scale=0.7]{$1$};
\draw (7.5,7.5) node[scale=0.7]{$1$};

\draw [->] (8,7) -- (9,7);

\begin{scope}[xshift=10cm]
\draw (0,0) grid[step=1] (7,14);
\draw (0,0) -- (7,7);
\draw (0,14) [dashed] -- (7,7);
\draw (0.5,0.5) node[scale=0.9]{$\alpha_0^o$};
\draw (0.5,10.5) node[scale=0.9]{$\beta_2^e$};
\draw (1.5,1.5) node[scale=0.9]{$\alpha_0^o$};
\draw (1.5,3.5) node[scale=0.9]{$\beta_2^e$};
\draw (2.5,2.5) node[scale=0.9]{$\beta_0^e$};
\draw (2.5,5.5) node[scale=0.9]{$\alpha_3^o$};
\draw (3.5,4.5) node[scale=0.9]{$\beta_1^e$};
\draw (3.5,9.5) node[scale=0.9]{$\beta_0^o$};
\draw (4.5,6.5) node[scale=0.9]{$\alpha_2^o$};
\draw (4.5,12.5) node[scale=0.9]{$\alpha_1^e$};
\draw (5.5,8.5) node[scale=0.9]{$\alpha_0^o$};
\draw (5.5,13.5) node[scale=0.9]{$\beta_1^e$};
\draw (6.5,7.5) node[scale=0.9]{$\beta_0^e$};
\draw (6.5,11.5) node[scale=0.9]{$\alpha_0^o$};
\end{scope}

\end{tikzpicture}

\end{center}

\caption{Tableau $T_1 \in \Tab_7$ (on the left) mapped to its pistol labeling (on the right).}
\label{fig:T0}

\end{figure}

\begin{rem}
\label{rem:aproposdupistollabeling}
We enumerate here a few facts about the pistol labeling of~$T \in \T_n$.
\begin{enumerate}[label=(\alph*)]
\item For all $j \in [n]$ and $i \in [j,2n]$, if the dot $d_i^T \in C_j^T$, then its digital label belongs to the set $[0,n-j]$.
\item If a dot $d_i^T$ in a column $C_j^T$ is labeled with $\alpha_0^e$, then by Rule III.2-a) of Algorithm \ref{algo:pistollabeling} the other dot of $C_j^T$ has the pistol label $\alpha_h^o$ for some $h \in [n-j]$. Also, the type label $\alpha$ of $d_i^T$ has necessarily been defined by Rule II.2-b)i., and in particular $i \in [n+1,n+j-1]$.
\item Every column of $T$ contains exactly one dot whose parity label is $o$ (respectively $e$).
\item By Rule II.2-a) and Rule III.1- of Algorithm \ref{algo:pistollabeling}, the pistol labels of the two dots of $C_n^T$ are $\alpha_0^o$ and $\beta_0^e$. Consequently, the type label of $d_n^T$ (respectively $d_{2n}^T$) is defined either by Rule II.1-a) or Rule II.2-a), and in either case it is $\alpha$ (respectively $\beta$). Also, whether its parity label is defined by Rule III.1- or Rule III.2-a) (respectively Rule III.1- or Rule III.2-b)), it equals $o$ (respectively $e$), and its pistol label is $\alpha_{n-j}^o$ (respectively $\beta_{n-j}^e$) where $C_j^T$ is the column that contains $d_n^T$ (respectively $d_{2n}^T$).
\item For all $i \in [n]$, if one of the dots of $C_i^T$ has the pistol label $\beta_0^e$, then the other dot of $C_i^T$ has the type label $\alpha$ (otherwise the parity labels of the two dots of $C_i^T$ would have been defined by Rule III.2-b) of Algorithm \ref{algo:pistollabeling}, following which the digital label of the dot labeled with $e$ cannot be $0$).
\item For all $j \in [n]$ and $i \in [j,2n]$, if $d_i^T \in C_j^T$, let $h \in [0,n-j]$ be its digital label and $j'= j+h$, then $$i \in \left\{ \left(\pi_j^T\right)^{-1}(j'), \left(\pi_j^T\right)^{-1}(n+j') \right\}.$$ Consequently, if the dots of $C_j^T$ are $d_{\left(\pi_j^T\right)^{-1}(j')}^T$ and $d_{\left(\pi_j^T\right)^{-1}(n+j')}^T$, then the twin dots $d_{j'}^T$ and $d_{n+j'}^T$ are located in the columns $C_1^T,C_2^T,\hdots,C_j^T$.
\item For all $i \in [n]$, if no dot of $C_i^T$ is labeled with $\beta_0^e$, then the type label of $d_{i,\min}^T$ is $\alpha$ if the dots of $C_i^T$ have different type labels, otherwise it is $\beta$.
\end{enumerate}
\end{rem}

\begin{defi}
Following Remark~13.(c), for all $T \in \T_n$ and $j \in [n]$, we define the \textit{odd} dot (respectively \textit{even} dot) of $C_j^T$ as the dot whose parity label is $o$ (respectively $e$).
\end{defi}

\subsection{A map from the tableaux to the surjective pistols}

\begin{defi}[Map $\varphi : \T_n \rightarrow \left\{2,4,\hdots,2n\right\}^{[2n]}$]
\label{defi:varphi}
Let $T \in \T_n$, we define a map $\varphi(T) : [2n] \rightarrow \left\{2,4,\hdots,2n\right\}$ as follows : for all $j \in [n]$, following Remark~\ref{rem:aproposdupistollabeling}.(a) and 13.(c), let $t_o \in \left\{\alpha,\beta\right\}$ and $h_o \in [0,n-j]$ (respectively $t_e \in \left\{\alpha,\beta\right\}$ and $h_e \in [0,n-j]$) be the type and digital labels of the odd dot (respectively even dot) of $C_j^T$. We first define $\varphi(T)(2j-1)$ as $2(j+h_o)$. Afterwards,
\begin{itemize}
\item if $t_e = \alpha$ and $h_e = 0$, we also define $\varphi(T)(2j)$ as $2(j+h_0)$;
\item otherwise, we define $\varphi(T)(2j)$ as $2(j+h_e)$.
\end{itemize}
\end{defi}

\begin{lem}
\label{lem:ouestdmin}
Let $T \in \Tab_n$, $f = \varphi(T)$ and $i \in [n]$. If the dot $d_{i,\min}^T$ is located in the column $C_{j}^T$, then there exists $k \in \left\{2j-1,2j\right\}$ such that $f(k) = 2i$, and $j$ is the integer $j_{\min} = \lceil k_{min} / 2 \rceil$ where
$$k_{\min} = \min \left\{k \in [2i] : f(k) = 2i\right\}.$$
\end{lem}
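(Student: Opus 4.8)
The plan is to locate, for the column $C_j^T$ containing $d_{i,\min}^T$, the digital label of that dot, read off the value of $f=\varphi(T)$ at $2j-1$ or $2j$, and then rule out any earlier column producing the value $2i$. I would first record an elementary geometric consequence of Definition~\ref{defi:twindots}: if $d_{i,\min}^T\in C_j^T$, then \emph{both} twin dots $d_i^T$ and $d_{n+i}^T$ lie in columns of index $\geq j$. Combined with the condition ``$j'\le i'$ for a dot in box $(j',i')$'' from Definition~\ref{defi:tableaux}, this also gives $i\geq j$.

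Next I would compute the digital label of $d_{i,\min}^T$. Its row is $i$ (if $d_{i,\min}^T=d_i^T$) or $n+i$ (if $d_{i,\min}^T=d_{n+i}^T$); since $i\in[j,n]$ respectively $n+i\in[n+j,2n]$, the $T$-path issued from its box is stationary already at step $0$ by Rule (1) of Definition~\ref{defi:TPath}, so $\pi_j^T$ fixes that row and the digital label is $i-j$ (equivalently, the arrival is $i$ or $n+i$). For existence, note $d_{i,\min}^T$ is the odd or the even dot of $C_j^T$ by Remark~\ref{rem:aproposdupistollabeling}.(c). If it is odd, then $h_o=i-j$ and Definition~\ref{defi:varphi} gives $f(2j-1)=2(j+h_o)=2i$. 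If it is even, the only obstruction is the exceptional clause $t_e=\alpha,\,h_e=0$ of Definition~\ref{defi:varphi}; but $h_e=0$ would force $i=j$, while the label $\alpha_0^e$ forces, by Remark~\ref{rem:aproposdupistollabeling}.(b), a row in $[n+1,n+j-1]$, which is impossible for a row equal to $j$ or $n+j$. Hence $f(2j)=2(j+h_e)=2i$. In both cases $k\in\{2j-1,2j\}$.

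The delicate step is minimality. Unwinding Definition~\ref{defi:varphi} (again handling its exceptional clause), the equalities $f(2j'-1)=2i$ or $f(2j')=2i$ would each require a dot of $C_{j'}^T$ whose digital label equals $i-j'$, i.e. whose $\pi_{j'}^T$-arrival lies in $\{i,n+i\}$. I would show no such dot exists for $j'<j$. Suppose $d\in C_{j'}^T$ had $\pi_{j'}^T$-arrival in $\{i,n+i\}$. If its $T$-path is stationary at step $0$, then $d$ is itself $d_i^T$ or $d_{n+i}^T$, sitting in $C_{j'}^T$ with $j'<j$, contradicting the first paragraph. Otherwise the arrival is reached at some step $m\geq 1$ through Rule (2) or (3) of Definition~\ref{defi:TPath}, so the dot occupying the arrival row — namely $d_i^T$ or $d_{n+i}^T$ — is the upper or lower dot of a column of index $\le j'-1$, again contradicting that both twins lie in columns of index $\geq j$. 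Thus $k_{\min}\geq 2j-1$; with the existence bound $k_{\min}\leq 2j$ this gives $k_{\min}\in\{2j-1,2j\}$ and $j_{\min}=\lceil k_{\min}/2\rceil=j$.

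I expect this last argument to be the main obstacle. The tempting shortcut — invoking Remark~\ref{rem:aproposdupistollabeling}.(f) to localize both twins to the left of any column that points to $i$ — is simply false (a column may point to $i$ while one of the two twins lies far to its right). One must instead exploit the reversibility of $T$-paths established in Proposition-Definition~\ref{prop:TPathstationnaire} to push an actual twin dot strictly to the left of $C_{j'}^T$ and thereby contradict the minimality built into the definition of $d_{i,\min}^T$.
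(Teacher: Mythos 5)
Your proposal is correct and takes essentially the same route as the paper's proof: the digital label of $d_{i,\min}^T$ is $i-j$ because its $T$-path is stationary at step $0$, Remark~\ref{rem:aproposdupistollabeling}.(b) rules out the exceptional clause of Definition~\ref{defi:varphi}, and any dot of a column $C_{j'}^T$ with $j'<j$ whose $\pi_{j'}^T$-arrival lay in $\left\{i,n+i\right\}$ would place $d_i^T$ or $d_{n+i}^T$ in a column of index at most $j'$, contradicting the definition of $d_{i,\min}^T$. Your closing worry is moot: the paper itself uses this direct path argument rather than Remark~\ref{rem:aproposdupistollabeling}.(f), and no appeal to reversibility is needed, since the last step of the $T$-path already locates the arrival dot in a column of index less than $j'$.
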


\begin{proof}
Since $d_{i,\min}^T = d_i^T$ or $d_{n+i}^T$, by Part I. of Algorithm \ref{algo:pistollabeling} its digital label is $i-j$. Consequently, either $d_{i,\min}^T$ is the odd dot of $C_{j}^T$, in which case $\varphi(T)(2j-1) = 2i$, or it is the even dot and $\varphi(T)(2j) = 2i$ because $d_{i,\min}^T$ cannot be labeled with $\alpha_0^e$ in view of Remark~\ref{rem:aproposdupistollabeling}.(b). In either case there exists  $k \in \left\{2j-1,2j\right\}$ such that $f(k) = 2i$, so $j \geq j_{\min}$. Reciprocally, since $f(k_{\min}) = 2i$, by Definition~\ref{defi:varphi} one dot of $C_{j_{\min}}^T$ has the digital label $i-j_{\min}$. By Definition~\ref{defi:TPath}, this implies that there exists $j' \leq j_{\min}$ such that $d_i^T \in C_{j'}^T$ or $d_{n+i}^T \in C_{j'}^T$, hence $j \leq j' \leq j_{\min}$, and $j = j_{\min}$.
\end{proof}

\begin{cor}
In particular, for all $T \in \Tab_n$, the map $\varphi(T)$ is surjective, thus belongs to $\SP_n$.
\end{cor}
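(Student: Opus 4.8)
The plan is to read off the surjectivity of $\varphi(T)$ directly from Lemma~\ref{lem:ouestdmin}, which already contains all of the substantive content. The key observation is that Lemma~\ref{lem:ouestdmin} is stated for \emph{every} $i \in [n]$, and for each such $i$ it produces an index $k$ with $\varphi(T)(k) = 2i$. Since the set $\{2i : i \in [n]\}$ is precisely the codomain $\{2,4,\hdots,2n\}$, showing that each of these values is attained is exactly the definition of surjectivity. So the proof should amount to quantifying the Lemma over all $i$ and then recording the elementary consequences.

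First I would check that the hypothesis of Lemma~\ref{lem:ouestdmin} holds for every $i \in [n]$. For any such $i$, the twin dots $d_i^T$ and $d_{n+i}^T$ both exist, since each row of $T$ contains exactly one dot; hence by Definition~\ref{defi:twindots} the dot $d_{i,\min}^T$ is well-defined and lies in some column $C_j^T$. Lemma~\ref{lem:ouestdmin} then furnishes $k \in \{2j-1,2j\}$ with $f(k) = 2i$, where $f = \varphi(T)$. Letting $i$ range over $[n]$ gives the inclusion $\{2,4,\hdots,2n\} \subseteq f([2n])$, and since $f$ takes its values in $\{2,4,\hdots,2n\}$ by construction, this establishes that $f$ is surjective.

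Finally, to upgrade \emph{surjective} to \emph{belongs to} $\SP_n$, I would verify the defining inequality $f(k) \geq k$ straight from Definition~\ref{defi:varphi}. For an odd index $k = 2j-1$ one has $f(k) = 2(j+h_o)$ with $h_o \geq 0$ by Remark~\ref{rem:aproposdupistollabeling}.(a), so $f(k) \geq 2j > k$; for an even index $k = 2j$ one has $f(k) = 2(j+h) \geq 2j = k$. The same remark bounds each digital label by $n-j$, giving $j+h \leq n$ and hence $f(k) \leq 2n$, so the stated codomain is respected as well. I do not expect any genuine obstacle here: the only enumerative difficulty — namely showing that a prescribed value $2i$ is actually hit, and in the predicted column $C_{j_{\min}}^T$ — is entirely discharged by Lemma~\ref{lem:ouestdmin}, so this corollary is a short bookkeeping consequence of the work already done.
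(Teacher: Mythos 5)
Your proposal is correct and matches the paper's intent exactly: the paper states this corollary without proof as an immediate consequence of Lemma~\ref{lem:ouestdmin}, and your argument — quantifying the lemma over all $i \in [n]$ to hit every value of $\left\{2,4,\hdots,2n\right\}$, then reading the inequality $f(k) \geq k$ off Definition~\ref{defi:varphi} together with Remark~\ref{rem:aproposdupistollabeling}.(a) — is precisely the bookkeeping the author leaves implicit.
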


For example, the tableau $T_1 \in \T_7$ depicted in Figure \ref{fig:T0} provides the surjective pistol $f_1 = (2,\textbf{6},4,\textbf{8},12,\underline{6},8,\textbf{10},14,\textbf{12},12,\textbf{14},14,\textbf{14}) \in \SP_7$ (whose vector statistic is $\overrightarrow{\ndf}(f_1) = [1,1,0,1,1,1,1]$) depicted in Figure~\ref{fig:f0}.

\begin{figure}[!htbp]

\begin{center}

\begin{tikzpicture}[scale=0.45]

\draw [->] (-3,3.5) -- (-2,3.5);

\begin{scope}[xshift=-11cm,yshift=-3.5cm]
\draw (0,0) grid[step=1] (7,14);
\draw (0,0) -- (7,7);
\draw (0,14) [dashed] -- (7,7);
\draw (0.5,0.5) node[scale=0.9]{$\alpha_0^o$};
\draw (0.5,10.5) node[scale=0.9]{$\beta_2^e$};
\draw (1.5,1.5) node[scale=0.9]{$\alpha_0^o$};
\draw (1.5,3.5) node[scale=0.9]{$\beta_2^e$};
\draw (2.5,2.5) node[scale=0.9]{$\beta_0^e$};
\draw (2.5,5.5) node[scale=0.9]{$\alpha_3^o$};
\draw (3.5,4.5) node[scale=0.9]{$\beta_1^e$};
\draw (3.5,9.5) node[scale=0.9]{$\beta_0^o$};
\draw (4.5,6.5) node[scale=0.9]{$\alpha_2^o$};
\draw (4.5,12.5) node[scale=0.9]{$\alpha_1^e$};
\draw (5.5,8.5) node[scale=0.9]{$\alpha_0^o$};
\draw (5.5,13.5) node[scale=0.9]{$\beta_1^e$};
\draw (6.5,7.5) node[scale=0.9]{$\beta_0^e$};
\draw (6.5,11.5) node[scale=0.9]{$\alpha_0^o$};
\end{scope}

\draw (0,0) grid[step=1] (2,7);
\draw (2,1) grid[step=1] (4,7);
\draw (4,2) grid[step=1] (6,7);
\draw (6,3) grid[step=1] (8,7);
\draw (8,4) grid[step=1] (10,7);
\draw (10,5) grid[step=1] (12,7);
\draw (12,6) grid[step=1] (14,7);

\fill (0.5,0.5) circle (0.2);
\draw (1.5,2.5) node[scale=1]{$\times$};
\fill (2.5,1.5) circle (0.2);
\draw (3.5,3.5) node[scale=1]{$\times$};
\fill (4.5,5.5) circle (0.2);
\fill (5.5,2.5) circle (0.2);
\fill (6.5,3.5) circle (0.2);
\draw (7.5,4.5) node[scale=1]{$\times$};
\fill (8.5,6.5) circle (0.2);
\draw (9.5,5.5) node[scale=1]{$\times$};
\fill (10.5,5.5) circle (0.2);
\draw (11.5,6.5) node[scale=1]{$\times$};
\fill (12.5,6.5) circle (0.2);
\draw (13.5,6.5) node[scale=1]{$\times$};

\draw (-0.5,0.5) node[scale=0.9]{$2$};
\draw (-0.5,1.5) node[scale=0.9]{$4$};
\draw (-0.5,2.5) node[scale=0.9]{$6$};
\draw (-0.5,3.5) node[scale=0.9]{$8$};
\draw (-0.5,4.5) node[scale=0.9]{$10$};
\draw (-0.5,5.5) node[scale=0.9]{$12$};
\draw (-0.5,6.5) node[scale=0.9]{$14$};

\draw (0.5,7.5) node[scale=0.9]{$1$};
\draw (1.5,7.5) node[scale=0.9]{$2$};
\draw (2.5,7.5) node[scale=0.9]{$3$};
\draw (3.5,7.5) node[scale=0.9]{$4$};
\draw (4.5,7.5) node[scale=0.9]{$5$};
\draw (5.5,7.5) node[scale=0.9]{$6$};
\draw (6.5,7.5) node[scale=0.9]{$7$};
\draw (7.5,7.5) node[scale=0.9]{$8$};
\draw (8.5,7.5) node[scale=0.9]{$9$};
\draw (9.5,7.5) node[scale=0.9]{$10$};
\draw (10.5,7.5) node[scale=0.9]{$11$};
\draw (11.5,7.5) node[scale=0.9]{$12$};
\draw (12.5,7.5) node[scale=0.9]{$13$};
\draw (13.5,7.5) node[scale=0.9]{$14$};

\end{tikzpicture}

\end{center}
\caption{The pistol-labeled version of $T_1 \in \Tab_7$ (on the left) is mapped by $\varphi$ to the surjective pistol $f_1 = (2,\textbf{6},4,\textbf{8},12,\underline{6},8,\textbf{10},14,\textbf{12},12,\textbf{14},14,\textbf{14}) \in \SP_7$ (on the right).}
\label{fig:f0}

\end{figure}

We now introduce a vectorial version of the statistic of non-doubled fixed points $\ndf : \DC_n \rightarrow [n]$ through $\overrightarrow{\ndf} : \DC_n \rightarrow \left\{0,1\right\}^n$ defined by $$\overrightarrow{\ndf}(f) = [\ndf_1(f),\ndf_2(f),\hdots,\ndf_n(f)]$$ where $\ndf_i(f) = 1$ if and only if $2i$ is not a doubled fixed point of $f \in \SP_n$.

In the example of $T_1 \in \Tab_7$ and $f_1 = \varphi(T_1) \in \SP_7$, note that $$\overrightarrow{\ndf}(f_1) = [1,1,0,1,1,1,1] \neq [1,1,0,0,1,1,1] = \overrightarrow{\fr}(T_1).$$ In order to define a statistic on tableaux that would be preserved by $\varphi$, we introduce the notion of grounded dots hereafter.

\begin{defi}
Let $T \in \Tab_n$ and $i \in [n]$. We say that the dot $d_{n+i}^T$ is \textit{grounded} if it is not free and if one of the dots of the column $C_i^T$ has the pistol label $\beta_0^e$. Let $\overrightarrow{\ngr}(T) = [\ngr_1(T),\ngr_2(T),\hdots,\ngr_n(T)]$ where $\ngr_i(T) \in \left\{0,1\right\}$ equals $0$ if and only if $d_{n+i}^T$ is grounded for all $i \in [n]$.
\end{defi}

For example, consider the tableau $T_1 \in \Tab_7$ depicted in Figure \ref{fig:T0}, we depict in Figure \ref{fig:T0grounded} the pistol labeling of $T_1$ in which every non-grounded dot has been encircled, which gives $$\overrightarrow{\ngr}(T_1) = [1,1,0,\textbf{1},1,1,1] = \overrightarrow{\ndf}(f_1).$$
Note that in general the dot $d_{2n}^T$, always being free, is never grounded, even though the column $C_n^T$ always has a dot labeled with $\beta_0^e$ (which is similar to $2n$ never being considered as a doubled fixed point of $f \in \SP_n$ even though $f(2n-1) = f(2n) = 2n$).

\begin{figure}[!htbp]

\begin{center}

\begin{tikzpicture}[scale=0.45]

\draw (0,0) grid[step=1] (7,14);
\draw (0,0) -- (7,7);
\draw (0,14) [dashed] -- (7,7);
\draw (0.5,0.5) node[scale=0.8]{$\alpha_0^o$};
\draw (0.5,10.5) node[scale=0.8]{$\beta_2^e$};
\draw (1.5,1.5) node[scale=0.8]{$\alpha_0^o$};
\draw (1.5,3.5) node[scale=0.8]{$\beta_2^e$};
\draw (2.5,2.5) node[scale=0.8]{$\beta_0^e$};
\draw (2.5,5.5) node[scale=0.8]{$\alpha_3^o$};
\draw (3.5,4.5) node[scale=0.8]{$\beta_1^e$};
\draw (3.5,9.5) node[scale=0.8]{$\beta_0^o$};
\draw (4.5,6.5) node[scale=0.8]{$\alpha_2^o$};
\draw (4.5,12.5) node[scale=0.8]{$\alpha_1^e$};
\draw (5.5,8.5) node[scale=0.8]{$\alpha_0^o$};
\draw (5.5,13.5) node[scale=0.8]{$\beta_1^e$};
\draw (6.5,7.5) node[scale=0.8]{$\beta_0^e$};
\draw (6.5,11.5) node[scale=0.8]{$\alpha_0^o$};

\draw (5.5,13.5) circle (0.45);
\draw (4.5,12.5) circle (0.45);
\draw (6.5,11.5) circle (0.45);
\draw (3.5,9.5) circle (0.45);
\draw (5.5,8.5) circle (0.45);
\draw (6.5,7.5) circle (0.45);

\draw (-0.5,0.5) node[scale=0.7]{$1$};
\draw (-0.5,1.5) node[scale=0.7]{$2$};
\draw (-0.5,2.5) node[scale=0.7]{$3$};
\draw (-0.5,3.5) node[scale=0.7]{$4$};
\draw (-0.5,4.5) node[scale=0.7]{$5$};
\draw (-0.5,5.5) node[scale=0.7]{$6$};
\draw (-0.5,6.5) node[scale=0.7]{$7$};
\draw (-0.5,12.5) node[scale=0.7]{$8$};
\draw (-0.5,11.5) node[scale=0.7]{$9$};
\draw (-0.5,10.5) node[scale=0.7]{$10$};
\draw (-0.5,9.5) node[scale=0.7]{$11$};
\draw (-0.5,8.5) node[scale=0.7]{$12$};
\draw (-0.5,7.5) node[scale=0.7]{$13$};
\draw (-0.5,13.5) node[scale=0.7]{$14$};

\draw (7.5,13.5) node[scale=0.7]{$1$};
\draw (7.5,12.5) node[scale=0.7]{$1$};
\draw (7.5,11.5) node[scale=0.7]{$1$};
\draw (7.5,10.5) node[scale=0.7]{$0$};
\draw (7.5,9.5) node[scale=0.7]{$\textbf{1}$};
\draw (7.5,8.5) node[scale=0.7]{$1$};
\draw (7.5,7.5) node[scale=0.7]{$1$};

\end{tikzpicture}

\end{center}

\caption{Pistol labeling of the tableau $T_1 \in \Tab_7$.}
\label{fig:T0grounded}

\end{figure}

\begin{lem}
\label{lem:pointfixesietseulementsibeta0e}
Let $T \in \T_n$ and $f = \varphi(T) \in \SP_n$. For all $i \in [n]$, the integer $2i$ is a fixed point of $f$ if and only if $C_i^T$ has a dot labeled with~$\beta_0^e$.
\end{lem}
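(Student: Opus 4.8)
The plan is to unwind the definition of $f(2i) = \varphi(T)(2i)$ directly from Definition~\ref{defi:varphi} and read off exactly when it equals $2i$. Fix $i \in [n]$ and let $t_e \in \{\alpha,\beta\}$, $h_e \in [0,n-i]$ (respectively $t_o$, $h_o$) be the type and digital labels of the even (respectively odd) dot of $C_i^T$, which exist and are unique by Remark~\ref{rem:aproposdupistollabeling}.(a) and Remark~\ref{rem:aproposdupistollabeling}.(c). Applying Definition~\ref{defi:varphi} with $j = i$, the value $f(2i)$ equals $2(i+h_o)$ in the exceptional branch where $t_e = \alpha$ and $h_e = 0$, and equals $2(i+h_e)$ in the generic branch otherwise.

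First I would treat the generic branch. There $f(2i) = 2(i+h_e)$, so $f(2i) = 2i$ holds if and only if $h_e = 0$; combined with the branch hypothesis (which excludes $t_e=\alpha$ together with $h_e=0$), this forces $t_e = \beta$, i.e. the even dot of $C_i^T$ carries the pistol label $\beta_0^e$. Conversely, if the even dot is labeled $\beta_0^e$, then $h_e = 0$ and $t_e = \beta$, so we are indeed in the generic branch and $f(2i) = 2(i+0) = 2i$. Thus within this branch, $2i$ is a fixed point exactly when the even dot is $\beta_0^e$.

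It then remains to dispose of the exceptional branch, where the even dot of $C_i^T$ is labeled $\alpha_0^e$ and $f(2i) = 2(i+h_o)$. Here I would invoke Remark~\ref{rem:aproposdupistollabeling}.(b): a dot labeled $\alpha_0^e$ forces the other (odd) dot of its column to have pistol label $\alpha_h^o$ with $h \in [n-i]$, whence $h_o \geq 1$ and $f(2i) = 2(i+h_o) \geq 2(i+1) > 2i$. So this branch never produces a fixed point, and combining the two cases shows that $2i$ is a fixed point of $f$ if and only if the even dot of $C_i^T$ is labeled $\beta_0^e$. This single step is the only non-formal part of the argument; everything else is a literal reading of Definition~\ref{defi:varphi}.

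Finally I would translate the condition on the even dot into the statement of the lemma. Any dot bearing the label $\beta_0^e$ has parity label $e$ and is therefore the even dot of its column, and by Remark~\ref{rem:aproposdupistollabeling}.(c) each column possesses exactly one even dot; hence ``the even dot of $C_i^T$ is labeled $\beta_0^e$'' is equivalent to ``$C_i^T$ has a dot labeled $\beta_0^e$''. This gives the claimed equivalence and completes the proof.
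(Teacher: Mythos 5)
Your proof is correct and takes essentially the same approach as the paper's: both unwind Definition~\ref{defi:varphi} at $j=i$ and use Remark~\ref{rem:aproposdupistollabeling}.(b) to rule out the case where the even dot of $C_i^T$ carries the label $\alpha_0^e$. The only difference is organizational --- you split directly into the two branches of the definition, whereas the paper proves the reverse implication by contradiction --- and this does not change the substance of the argument.
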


\begin{proof}
If $C_i^T$ has a dot labeled with $\beta_0^e$, by Definition~\ref{defi:varphi} we have $f(2i) = 2i$. Reciprocally, suppose that no dot of $C_i^T$ has the pistol label $\beta_0^e$, and that $f(2i) = 2i$. If the digital label of the even dot of $C_i^T$ was $h_e > 0$, we would have $f(2i) = 2(i+h_e) > 2i$, so its digital label is necessarily $h_e = 0$, and since its pistol label is not $\beta_0^e$ by hypothesis, then it must be $\alpha_0^e$. In view of Remark~\ref{rem:aproposdupistollabeling}(b), this implies that the other dot of $C_i^T$ has the pistol label $\alpha_{h_o}^o$ for some $h_o >0$. We then have $f(2i) = 2(i+h_o) > 2i$, which is absurd.
\end{proof}

\begin{prop}
\label{prop:egalitedesstats}
Let $f \in \SP_n$ and $T \in \varphi^{-1}(f) \subset \T_n$. We have $\overrightarrow{\ngr}(T) = \overrightarrow{\ndf}(f).$
\end{prop}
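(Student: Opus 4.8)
The plan is to prove the vector identity coordinatewise, i.e.\ to show $\ngr_i(T) = \ndf_i(f)$ for every $i \in [n]$. The coordinate $i = n$ is immediate: the dot $d_{2n}^T$ is always free, hence never grounded, so $\ngr_n(T) = 1$, while $2n$ is by convention never a doubled fixed point, so $\ndf_n(f) = 1$. For $i < n$ I would funnel both sides through the single predicate $f(2i) = 2i$, establishing
\[
\ngr_i(T) = 0 \iff f(2i) = 2i \iff \ndf_i(f) = 0,
\]
after which $\ngr_i(T) = \ndf_i(f)$ follows since both values lie in $\{0,1\}$.

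For the left equivalence, $\ngr_i(T) = 0$ means $d_{n+i}^T$ is grounded, that is, $d_{n+i}^T$ is not free \emph{and} $C_i^T$ carries a dot labelled $\beta_0^e$; by Lemma~\ref{lem:pointfixesietseulementsibeta0e} this last condition is exactly $f(2i) = 2i$. So it suffices to check that as soon as $C_i^T$ has a $\beta_0^e$ dot, $d_{n+i}^T$ is automatically not free. That $\beta_0^e$ dot lies in $C_i^T$ with digital label $0$, hence points to $j' = i$, and Remark~\ref{rem:aproposdupistollabeling}.(f) then forces both twin dots $d_i^T$ and $d_{n+i}^T$ into the columns $C_1^T,\dots,C_i^T$. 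Since the upper rows are numbered in reverse (so that $d_{n+i}^T$ occupies the physical row at height $2n-i$), the box-coordinate definition of a free dot shows $d_{n+i}^T$ is free iff its column index exceeds $i$; a column index $\le i$ therefore rules freeness out, giving $\ngr_i(T) = 0 \iff f(2i) = 2i$.

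For the right equivalence, the implication $\ndf_i(f) = 0 \Rightarrow f(2i) = 2i$ is built into the definition of a doubled fixed point, so the substantive direction is the converse: whenever $f(2i) = 2i$ with $i < n$, the value $2i$ must already have a preimage strictly below $2i$. Writing $d_{i,\min}^T \in C_{j_{\min}}^T$, Lemma~\ref{lem:ouestdmin} furnishes some $k \in \{2 j_{\min}-1,\, 2 j_{\min}\}$ with $f(k) = 2i$, and the argument above shows $j_{\min} \le i$. If $j_{\min} < i$ then $k \le 2 j_{\min} < 2i$ and we are done. If $j_{\min} = i$, then both twins lie in $C_i^T$, so the two dots of $C_i^T$ are exactly $d_i^T$ and $d_{n+i}^T$; since $i \in [i,n]$ and $n+i \in [n+i,2n]$ their $T$-paths are stationary from the start, so both carry digital label $0$, whence $f(2i-1) = 2i$ by Definition~\ref{defi:varphi}, again a preimage below $2i$. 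This yields $f(2i) = 2i \Rightarrow \ndf_i(f) = 0$ and closes the chain of equivalences.

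I expect the main obstacle to be precisely this last implication: it is \emph{false} for a generic surjective pistol, where an even value may well be a non-doubled fixed point, so the proof must genuinely use that $f$ is in the image of $\varphi$—concretely, that a fixed column forces both of its dots to have digital label $0$. A secondary point demanding care is the freeness criterion ``$d_{n+i}^T$ free $\iff$ its column index is $> i$'', which relies on the reversed numbering of the rows $R_{n+1}^T,\dots,R_{2n-1}^T$ and should be reconciled explicitly with the box-coordinate definition of free dots before it is invoked.
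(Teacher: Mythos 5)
Your reduction of the proposition to the single predicate $f(2i)=2i$, via the chain $\ngr_i(T)=0 \iff f(2i)=2i \iff \ndf_i(f)=0$, cannot work: both links of the chain are false, and they fail on tableaux that genuinely occur. Concretely, take $n=2$ and let $T$ be the first tableau of Figure~\ref{fig:T2}, with $d_1^T,d_2^T$ in $C_1^T$ and $d_3^T,d_4^T$ in $C_2^T$. Algorithm~\ref{algo:pistollabeling} labels $d_3^T$ with $\alpha_0^o$, $d_4^T$ with $\beta_0^e$, then $d_2^T$ with $\alpha_1^o$ and $d_1^T$ with $\beta_0^e$, so that $\varphi(T)=(4,2,4,4)$, the pistol $f_1$ of Figure~\ref{fig:SP2}. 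For $i=1<n$ we have $f(2i)=2$, so the middle predicate of your chain holds; yet $d_{n+i}^T=d_3^T$ lies in $C_2^T$ and is free, hence not grounded, giving $\ngr_1(T)=1$, and $2$ is a fixed point of $f$ that is \emph{not} doubled (the only candidate preimage is $1$, but $f(1)=4$), giving $\ndf_1(f)=1$. The proposition survives only because the two sides fail \emph{simultaneously} in this configuration; your argument, which would force each side separately to equal $0$ whenever $f(2i)=2i$, proves something false.

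The two concrete missteps are these. First, to show that a $\beta_0^e$ dot in $C_i^T$ makes $d_{n+i}^T$ non-free, you invoke Remark~\ref{rem:aproposdupistollabeling}(f); but the ``consequently'' clause of that remark applies only when \emph{both} dots of $C_i^T$ are the two preimages $\left(\pi_i^T\right)^{-1}(i)$ and $\left(\pi_i^T\right)^{-1}(n+i)$, i.e.\ share the digital label $0$. Knowing only that one dot carries $\beta_0^e$ gives no control on the twin $d_{n+i}^T$, and in the example above it escapes to a free box. Second, in the other direction you claim that $j_{\min}=i$ forces the two dots of $C_i^T$ to be the twins $d_i^T$ and $d_{n+i}^T$; but $j_{\min}=i$ only places $d_{i,\min}^T$ in $C_i^T$, not its twin (in the example $d_{1,\min}^T=d_1^T\in C_1^T$ while $d_3^T\in C_2^T$), so the second preimage $f(2i-1)=2i$ you deduce from it need not exist. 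This exceptional configuration --- $\beta_0^e$ in $C_i^T$ with $d_{n+i}^T$ free --- is exactly the case to which the paper devotes the second half of its proof: there one shows, via Rule II.2-b) of Algorithm~\ref{algo:pistollabeling} and Lemma~\ref{lem:ouestdmin}, that the other dot of $C_i^T$ is then $\alpha_k^o$ with $k\neq 0$ and that $\min\left\{k\in[2i] : f(k)=2i\right\}=2i$, so $2i$ is a fixed point that is not doubled, matching the non-groundedness of the free dot $d_{n+i}^T$. Any correct proof must keep ``grounded'' tied to ``doubled fixed point'' rather than to ``fixed point'' and must handle this case explicitly.
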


\begin{proof}
Let $i \in [n]$. If $d_{n+i}^T$ is a grounded dot, then in particular the even dot of $C_i^T$ has the pistol label $\beta_0^e$, so $f(2i) = 2i$ by Lemma~\ref{lem:pointfixesietseulementsibeta0e}. Also, the dot $d_{n+i}^T$ is not free, \textit{i.e.}, it is located in a column $C_j^T$ with $j \leq i$. Let also $j' \leq j$ such that $d_{i,\min}^T \in C_{j'}^T$. By Lemma~\ref{lem:ouestdmin}, there exists $k \in \left\{2j'-1,2j'\right\}$ such that $f(2k) = 2i$. Consequently, if $j' < i$, then $2i$ is a doubled fixed point of $f$. Otherwise, we have $j'= j = i$, so the two dots of $C_i^T$ are $d_i^T$ and $d_{n+i}^T$, in which case it is straightforward from Algorithm \ref{algo:pistollabeling} that their pistol labels are respectively $\alpha_0^o$ and $\beta_0^e$, thus $f(2i-1) = f(2i) = 2i$, and $2i$ is still a doubled fixed point of $f$.

Reciprocally, if $d_{n+i}^T$ is not a grounded dot, then either it is free, or the even dot of $C_i^T$ is not labeled with $\beta_0^e$. If the even dot of $C_i^T$ is not labeled with $\beta_0^e$, then $2i$ is not a fixed point of $f$ by Lemma~\ref{lem:pointfixesietseulementsibeta0e}, in particular it is not a doubled fixed point. Assume now that the even dot of $C_i^T$ is labeled with $\beta_0^e$ but that $d_{n+i}^T$ is free (which implies that $d_{i,\min}^T = d_i^T$). We have $f(2i) = 2i$ by Lemma~\ref{lem:pointfixesietseulementsibeta0e}. By Remark~\ref{rem:aproposdupistollabeling}(f), since $d_{n+i}^T$ is free, the dot of $C_i^T$ labeled with $\beta_0^e$ is $d_{\left(\pi_i^T\right)^{-1}(i)}^T$. This forces its label $\beta$ to have been defined by Rule II.2-b) of Algorithm \ref{algo:pistollabeling}, and in view of Remark~\ref{rem:aproposdupistollabeling}(e), it has been defined with precision by Rule II.2-b)i., which implies in this situation that $d_i^T \in C_i^T$. As a summary, the two dots of $C_i^T$ are $d_{i,\min}^T = d_i^T$ (whose pistol label is $\beta_0^e$), and another dot whose pistol label is $\alpha_k^o$ with $k \neq 0$. By Definition~\ref{defi:varphi}, we then have $f(2i) = 2i$ and $f(2i-1) = 2(i+k) > 2i$. Also, since $d_{i,\min}^T = d_i^T$, by Lemma~\ref{lem:ouestdmin} we know that $\min \left\{k \in [2i] : f(k) = 2i\right\}$ belongs to $\left\{2i-1,2i\right\}$, so it is $2i$, which is consequently a fixed point of $f$ but not a doubled fixed point.
\end{proof}

\section{From the surjective pistols to the tableaux}
\label{sec:injection}

\subsection{Insertion labels and $(f,j)$-insertions}

\begin{defi}[Insertion of a dot into a $j$-tableau]
\label{defi:insertiondansuntableau}
Let $f \in \SP_n$, $j \in [n]$ and $T \in \T_n^j$. We consider $i \in [j,n] \sqcup [n+j,2n]$, and $i' = \left(\pi_j^T\right)^{-1}(i) \in [j,2n]$. Following Proposition-Definition~\ref{prop:TPathstationnaire}, the $j-1$ first boxes of $R_{i'}^T$ are empty. Now, if the box $C_j^T \cap R_{i'}^T$ is also empty, we define a new $j$-tableau by plotting a dot in this box. This operation is called the \textit{insertion of a dot into the box $C_j^T \cap R_{i}^T$}.
\end{defi}

For example (in this case $n = 7$ and $j = 4$), the insertion of a dot into the box $C_4^{T_0} \cap R_6^{T_0}$ (respectively the box $C_4^{T_0} \cap R_{14}^{T_0}$) of the $4$-tableau $T_0 \in \T_7^4$ depicted in Figure \ref{fig:T0}, leads to plotting a dot in the box $C_4^{T_0} \cap R_{8}^{T_0}$ (respectively the box $C_4^{T_0} \cap R_{9}^{T_0}$).

\begin{defi}[labeled $j$-tableaux]
\label{defi:labeledjtableaux}
Let $j \in [n]$, we denote by $\TT_n^j$ the set of tableaux $T \in \T_n^j$ whose dots are labeled with the letter $a$ or $b$, whose columns $C_{j+1}^T,\hdots,C_n^T$ are empty, and whose column $C_j^T$ contains at most one dot.
\end{defi}

\begin{defi}[$(f,j)$-insertion of labeled dots into a labeled $j$-tableau]
\label{defi:deltainsertions}
Let $j \in [n]$ and $T \in \TT_n^j$. We consider $l \in \left\{a,b\right\}$ and $h \in [0,n-j]$. The \textit{$(f,j)$-insertion in $T$ of a dot labeled with $l$ at the height $h$} consists of the following. Let $i = j+ h \in [j,n]$.
\begin{enumerate}[label=\arabic*.]
\item Suppose that $i = j$.
\begin{enumerate}
\item If $R_j^T$ is empty, we insert a dot labeled with $l$ in the box $C_j^T \cap R_j^T$.
\item Otherwise, we insert a dot labeled with $l$ in the box $C_j^T \cap R_j^T$ (respectively $C_j^T \cap R_{n+j}^T$) if $l = a$ (respectively $l = b$).
\end{enumerate}
\item Suppose that $i > j$.
\begin{enumerate}
\item If $R_i^T$ is empty,
\begin{enumerate}[label=\roman*.]
\item if $l = b$ and $f(2i) = 2i$, then we insert a dot labeled with $l$ in the box $C_j^T \cap R_{n+i}^T$;
\item otherwise, we insert a dot labeled with $l$ in the box $C_j^T \cap R_i^T$.
\end{enumerate}
\item Otherwise, let $l' \in \left\{a,b\right\}$ be the label of the dot of $R_i^T$. If $l = l'$ (respectively $l \neq l'$), then we insert a dot labeled with $l$ in the box $C_j^T \cap R_i^T$ (respectively $C_j^T \cap R_{n+i}^T$).
\end{enumerate}
\end{enumerate}

\end{defi}

\subsection{A map from the surjective pistols to the tableaux}

Let $f \in \SP_n$, and $T^1 \in \TT_n^1$ be the empty labeled $1$-tableau. For $j$ from $1$ to~$n$, we are going to define (in Algorithm \ref{algo:insertionlabeling}) a labeled $(j+1)$-tableau $T^{j+1} \in \TT_n^{j+1}$ by filling $C_j^{T^j}$ with two dots located above the line $y = x$, and labeled with the letter $a$ or $b$.

\begin{algo}
\label{algo:insertionlabeling}

For $j$ from $1$ to $n$, we consider the induction hypothesis $H(j)$ defined as follows.
\begin{enumerate}[label=(\Alph*)]
\item $T^j \in \TT_n^j$.
\item If the row $R_j^{T^j}$ is empty and $f(2j) > 2j$, then $f(2k) \neq 2j$ for all $k \in [2j-2]$ (hence $f(2j-1) = 2j$ because $f$ is surjective).
\end{enumerate}

Hypothesis $H(1)$ is obviously true and we initiate the following algorithm for $j = 1$. Let $(\delta_o,\delta_e) = (f(2j-1)/2-j,f(2j)/2-j) \in [0,n-j]^2$.

\begin{enumerate}[label=\Roman*.]

\item We define first two labels $l_o$ and $l_e$ as follows.

\begin{enumerate}[label=\arabic* -]

\item If the row $R_j^{T^j}$ is empty, let $(l_o,l_e) = (a,b)$.

\item Otherwise, let $d$ be the dot of $R_j^{T^j}$.
\begin{enumerate}[label=\alph*)]
\item If $d$ is labeled with $a$, let $(l_o,l_e) = (a,b)$.
\item Else,
\begin{enumerate}[label=\roman*.]
\item if $\delta_o < \delta_e$, let $(l_o,l_e) = (b,b)$;
\item if $\delta_o \geq \delta_e$, let $(l_o,l_e) = (a,a)$.
\end{enumerate}
\end{enumerate}

\end{enumerate}

\item Then, we define two heights $(h_o,h_e) \in [0,n-j]^2$ as follows. The height $h_o$ is defined as $\delta_o$. Afterwards,

\begin{enumerate}[label=\arabic* -]

\item if $l_e = a$ and $\delta_o = \delta_e$, we define $h_e$ as $0$;

\item otherwise, we define $h_e$ as $\delta_e$.

\end{enumerate}

\end{enumerate}

We finally define $T^{j+1}$ as the tableau obtained first by $(f,j)$-inserting in $T^j$ a dot labeled with $l_o$ at the height $h_o$, then by $(f,j)$-inserting in the resulted tableau a dot labeled with $l_e$ at the level $h_e$. We prove now that Hypothesis $H(j+1)$ is true.

\begin{enumerate}[label=(\Alph*)]
\item Following the condition (A) of Hypothesis $H(j)$, since $T^{j+1}$ is obtained by plotting two dots in $C_j^{T^j}$ and in two empty rows of $T^j$, we only need to prove that $R_j^{T^{j+1}}$ contains a dot. Either $R_j^{T^j}$ contains a dot, in which case $R_j^{T^{j+1}}$ too, or, following the condition (B) of Hypothesis $H(j)$, we have $\delta_o = 0$ or $\delta_e = 0$, hence $h_o = 0$ or $h_e = 0$, which implies that the box $C_j^{T^{j+1}} \cap R_j^{T^{j+1}}$ contains a dot by Rule 1.(a) of Definition~\ref{defi:deltainsertions}. So $T^{j+1} \in \TT_n^{j+1}$.
\item If $2j+2$ is not a fixed point of $f$ and if there exists $k \in [2j]$ that is mapped to $2j+2$ by $f$, suppose that $k$ is the smallest integer to have that property and let $j' = \lceil k/2 \rceil \leq j$ ; at the $j'$-th step of the algorithm, a dot is $(f,j')$-inserted in $T^{j'}$ at the level $h = j+1-j'$. Since $k$ is minimal, the row $R_{j+1}^{T^{j'}}$ is empty, so the box $C_{j'}^{T^{j'+1}} \cap R_{j+1}^{T^{j'+1}}$ contains a dot by Rule 2.(a)ii. of Definition~\ref{defi:deltainsertions}.
\end{enumerate}

So the above algorithm is well-defined and, following Hypothesis $H(n+1)$, produces a tableau $T^{n+1} \in \TT_n^{n+1}$, in other words, a tableau $T \in \Tab_n$ whose dots are labeled with the letter $a$ or $b$. We define $\Phi(f)$ as this tableau $T \in \Tab_n$. 

\end{algo}

For example, consider the surjective pistol $$f_1 = (2,\textbf{6},4,\textbf{8},12,\underline{6},8,\textbf{10},14,\textbf{12},12,\textbf{14},14,\textbf{14}) \in \SP_7,$$ whose graphical representation is depicted in Figure \ref{fig:f0}. We depict in Figure \ref{fig:Phif0} the insertion-labeled version of the tableau $\Phi(f_1) \in \Tab_7$, which is in fact the tableau $T_1 \in \Tab_7$ depicted in Figure \ref{fig:T0}, mapped to $f_1$ by $\varphi$ (see Figure \ref{fig:f0}). The details of this computation are given in Appendix~\ref{annex:insertionlabelingf1}.

\begin{figure}[!htbp]

\begin{center}

\begin{tikzpicture}[scale=0.4]

\draw (0,0) grid[step=1] (7,14);
\draw (0,0) -- (7,7);
\draw (0,14) [dashed] -- (7,7);
\draw (0.5,0.5) node[scale=1]{$a$};
\draw (0.5,10.5) node[scale=1]{$b$};
\draw (1.5,1.5) node[scale=1]{$a$};
\draw (1.5,3.5) node[scale=1]{$b$};
\draw (2.5,2.5) node[scale=1]{$b$};
\draw (2.5,5.5) node[scale=1]{$a$};
\draw (3.5,4.5) node[scale=1]{$b$};
\draw (3.5,9.5) node[scale=1]{$b$};
\draw (4.5,6.5) node[scale=1]{$a$};
\draw (4.5,12.5) node[scale=1]{$a$};
\draw (5.5,8.5) node[scale=1]{$a$};
\draw (5.5,13.5) node[scale=1]{$b$};
\draw (6.5,7.5) node[scale=1]{$b$};
\draw (6.5,11.5) node[scale=1]{$a$};
\end{tikzpicture}

\end{center}
\caption{The insertion labeling of the tableau $\Phi(f_1) \in \Tab_7$.}
\label{fig:Phif0}

\end{figure}

\section{Connection between $\varphi$ and $\phi$}

\label{sec:connexion}

\begin{lem}
\label{lem:sij1pluspetitquej2alorsaetb}
Let $f \in \SP_n$, $T = \Phi(f) \in \T_n$ and $i \in [n]$. If $d_{i,\min}^T = d_{n+i}^T$, then $f(2i) = 2i$ and the two dots of $C_i^T$ have different insertion labels.
\end{lem}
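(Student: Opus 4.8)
The plan is to trace the construction of $T = \Phi(f)$ by Algorithm~\ref{algo:insertionlabeling} and to use its well-definedness---the fact that every inserted dot lands in an empty row---as a source of contradictions. Write $d_i^T \in C_{j_1}^T$ and $d_{n+i}^T \in C_{j_2}^T$. By Definition~\ref{defi:twindots} the hypothesis $d_{i,\min}^T = d_{n+i}^T$ says exactly that $j_2 < j_1$, and since $d_i^T$ lies in row $R_i^T$ the constraint $j \leq i$ for dots of $\Tab_n$ gives $j_1 \leq i$; hence $j_2 < j_1 \leq i$. In particular $j_2 < i$, so the row $R_{n+i}^T$ is already occupied when step $i$ of the algorithm begins, while $R_i^T$ stays empty until step $j_1$.

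First I would prove $f(2i) = 2i$. The dot $d_{n+i}^T$ sits in the upper row $R_{n+i}^T$, so it was placed at step $j_2$ by an $(f,j_2)$-insertion of height $h = i - j_2 > 0$ (so that $j_2 + h = i$ in the notation of Definition~\ref{defi:deltainsertions}) landing in the upper box; since $i > j_2$ this is a Rule~2 insertion, and at that moment $R_i^T$ is still empty because $d_i^T$ enters only at step $j_1 > j_2$. Among the insertion rules, the only one depositing a dot in the upper row $R_{n+i}^T$ while $R_i^T$ is empty is Rule~2.(a)i., which fires precisely when the label is $b$ and $f(2i) = 2i$. This gives $f(2i) = 2i$ (and incidentally that $d_{n+i}^T$ is labeled $b$).

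Next I would show $j_1 = i$, i.e.\ that $d_i^T$ is actually one of the two dots of $C_i^T$. Since $f(2i)=2i$, a dot can first occupy the empty lower row $R_i^T$ only through Rule~1.(a) (at step $i$) or Rule~2.(a)ii.\ (at some step $< i$): Rules~1.(b) and~2.(b) require $R_i^T$ to be already occupied, and under $f(2i)=2i$ a $b$-labeled dot aimed at level $i$ is diverted to the upper row. Were $d_i^T$ created by a Rule~2.(a)ii.\ insertion at a step $j_1 < i$, then both $R_i^T$ and $R_{n+i}^T$ would be occupied at the start of step $i$; but there $\delta_e = f(2i)/2 - i = 0$, so the even dot is inserted at level $i$ under Rule~1, and with $R_i^T$ occupied Rule~1.(b) would force it into $R_i^T$ or $R_{n+i}^T$, both full---contradicting the well-definedness of Algorithm~\ref{algo:insertionlabeling}. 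Hence $j_1 = i$ and $R_i^T$ is empty at the start of step~$i$.

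Finally I would read off the two labels at step $i$. With $R_i^T$ empty, Rule~I of the algorithm gives $(l_o,l_e) = (a,b)$, while $\delta_e = 0$ forces $h_e = 0$, so the even dot is aimed at level $i$. If $\delta_o = h_o$ were $0$, the odd dot (label $a$) would fill $R_i^T$ by Rule~1.(a) and the even dot (label $b$) would then be pushed by Rule~1.(b) into the occupied $R_{n+i}^T$---again impossible; so $\delta_o > 0$. Then the odd dot is aimed at level $i + \delta_o > i$ and, by Rule~2, lands in a row of index $> i$ distinct from $R_i^T$, leaving $R_i^T$ free, so the even dot is placed in $C_i^T \cap R_i^T$ by Rule~1.(a). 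Thus $d_i^T$ is this even dot, with label $b$, and the other dot of $C_i^T$ is the odd dot, with label $a$; the two labels differ. The delicate part throughout is the bookkeeping of the last two paragraphs: one must repeatedly convert a hypothetical collision of two dots in $R_i^T$ or $R_{n+i}^T$ into a genuine contradiction with well-definedness, which hinges on knowing that $R_{n+i}^T$ is occupied before step $i$ and that $\delta_e = 0$ puts the even-dot insertion under Rule~1.
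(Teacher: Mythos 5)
Your first half is correct and is essentially the paper's own argument: since $j_2<j_1$, the dot $d_{n+i}^T$ is the first of the two twin dots to be placed, so at step $j_2$ the row $R_i^T$ is still empty, the insertion creating $d_{n+i}^T$ therefore falls under the row-empty rules, and the only such rule that deposits a dot in $R_{n+i}^T$ is Rule 2.(a)i.\ of Definition~\ref{defi:deltainsertions}, which forces $f(2i)=2i$.

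The second half has a genuine gap. The intermediate claim $j_1=i$ is false, and the engine you use to establish it --- converting a hypothetical insertion aimed at an occupied row into a contradiction with well-definedness --- rests on a misreading of Definition~\ref{defi:insertiondansuntableau}: an insertion ``into the box $C_j^T\cap R_i^T$'' does \emph{not} plot a dot in row $R_i^T$, but in row $(\pi_j^T)^{-1}(i)$, whose first $j-1$ boxes are empty by Proposition-Definition~\ref{prop:TPathstationnaire}. When the nominal target row is occupied, the $T$-path machinery redirects the dot to an empty row, so no collision can ever occur and no contradiction is available; in particular, Rule 1.(b) applied at step $i$ with $R_i^T$ and $R_{n+i}^T$ both occupied is perfectly legal. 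A concrete counterexample: for $f=(2,6,6,4,6,6)\in\SP_3$, Algorithm~\ref{algo:insertionlabeling} produces $T=\Phi(f)$ with $C_1^T=\left\{d_1^T,d_6^T\right\}$ (labels $a,b$), $C_2^T=\left\{d_2^T,d_3^T\right\}$ (labels $b,a$) and $C_3^T=\left\{d_4^T,d_5^T\right\}$ (labels $b,a$); here $d_{3,\min}^T=d_6^T=d_{n+3}^T$, yet $d_3^T$ lies in $C_2^T$, so $j_1=2<3=i$ and the dots of $C_i^T$ are not the twins at all. This case cannot be excluded; it must be handled, which is exactly what the paper's proof does: when $j_1<i$, the row $R_i^T$ is occupied at step $i$, so the labels of the dots of $C_i^T$ are assigned by Rule I.2- of Algorithm~\ref{algo:insertionlabeling}; the dot $d_i^T$, having been placed in the then-empty row $R_i^T$ at a step $j_1<i$, was necessarily placed by Rule 2.(a)ii., and since $f(2i)=2i$ that rule can only fire with label $a$, whence Rule I.2-a) gives $(l_o,l_e)=(a,b)$. (The same misreading also infects your last paragraph; note that when $j_1=i$ genuinely holds, Rule I.1- immediately gives $(l_o,l_e)=(a,b)$, so none of that bookkeeping is needed.)
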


\begin{proof}
In general, the dot $d_{i,\min}^T$ is, by its definition, always plotted by Rule 1.(a) or Rule 2.(a) of Definition~\ref{defi:deltainsertions}. Now, if $d_{i,\min}^T = d_{n+i}^T$, then with precision it must be plotted by Rule 2.(a)i., following which $f(2i) = 2i$.

Afterwards, if the insertion labels of the dots of $C_i^T$ are defined by Rule I.1- of Algorithm \ref{algo:insertionlabeling}, then they are different by definition. If they are defined by Rule I.2-, then $d_i^T$ belongs to a column $C_j^T$ with $j < i$, which implies that it was plotted by Rule 2.(a)ii. of Definition~\ref{defi:deltainsertions}. Since $f(2i) = 2i$, this implies that its insertion label is $a$, hence the insertion labels of the dots of $C_i^T$ are different by Rule I.2-a) of Algorithm \ref{algo:insertionlabeling}.
\end{proof}

\begin{lem}
\label{lem:insertionlabelsequalpistollabels}
Let $f \in \SP_n$ and $T = \Phi(f) \in \Tab_n$. The type label of a dot of $T$ is $\alpha$ if and only if its insertion label is $a$.
\end{lem}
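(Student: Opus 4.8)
The plan is to fix $f \in \SP_n$, set $T = \Phi(f)$, and prove the equivalence by a downward induction on the column index $j$, following the order in which the pistol labeling (Algorithm~\ref{algo:pistollabeling}) treats $C_n^T, C_{n-1}^T, \ldots, C_1^T$. The tension to keep in mind from the outset is that the two labelings scan the columns in opposite directions: Algorithm~\ref{algo:insertionlabeling} builds $C_1^T$ first and $C_n^T$ last, whereas the pistol labeling types $C_n^T$ first. So the inductive hypothesis (type $=$ insertion on $C_{j+1}^T,\ldots,C_n^T$) controls exactly the columns that the type rule~II of Algorithm~\ref{algo:pistollabeling} inspects, namely the column $C_{j'}^T$ with $j' = j+h \geq j$, but not the column supplying the dot that sits in row $R_j^{T^j}$, which comes from some $C_{j''}^T$ with $j'' < j$ and is read by rule~I of Algorithm~\ref{algo:insertionlabeling}. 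Reconciling these opposite dependencies is the core of the argument.

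First I would record the dictionary between the two algorithms at step $j$. The digital label of a dot of $C_j^T$ equals the height at which it was inserted: by Definition~\ref{defi:insertiondansuntableau} a dot inserted at height $h$ occupies the row $(\pi_j^T)^{-1}(j+h)$ or $(\pi_j^T)^{-1}(n+j+h)$, so its $T$-path arrives at $j+h$ or $n+j+h$, and Part~I of Algorithm~\ref{algo:pistollabeling} reads off the digital label $h$. Consequently the two values $(\delta_o,\delta_e)$ coming from $f(2j-1),f(2j)$ determine the digital labels of the two dots of $C_j^T$, and the choice between a ``lower'' arrival $j+h$ and an ``upper'' arrival $n+j+h$ made by the $(f,j)$-insertion (Definition~\ref{defi:deltainsertions}) is precisely the datum $i' = \pi_j^T(i) \in \{j',n+j'\}$ used by rule~II. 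This pins down what must be matched: the letter $l \in \{a,b\}$ chosen by Algorithm~\ref{algo:insertionlabeling} for a given dot must equal the type letter in $\{\alpha,\beta\}$ that rule~II assigns to that same dot.

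The heart of the proof is then a case analysis carried column by column. For a dot of $C_j^T$ with digital label $h>0$ (equivalently $j'=j+h>j$), its type is fixed by rule~II.1 from the two dots of $C_{j'}^T$, whose type labels coincide with their insertion labels by the inductive hypothesis; I would check that rule~II.1 reproduces, case by case, the choice of $l$ and of lower/upper row made by rule~2 of Definition~\ref{defi:deltainsertions}, treating the sub-case II.1a) separately through the dictionary ``$\beta_0^e$ on $C_{j'}^T \iff f(2j')=2j'$ with the even dot grounded'', which is furnished by Lemma~\ref{lem:pointfixesietseulementsibeta0e} together with Remark~\ref{rem:aproposdupistollabeling}(b),(e). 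For a dot with $h=0$, its type is fixed by rule~II.2 from the other dot of $C_j^T$; here I would exploit the fact that a \emph{relative}-label statement is available on both sides: Lemma~\ref{lem:sij1pluspetitquej2alorsaetb} pins down the relative insertion labels of the two dots of $C_j^T$ exactly in the situation $d_{j,\min}^T = d_{n+j}^T$, while Remark~\ref{rem:aproposdupistollabeling}(g) pins down their relative type labels in the very same dichotomy. Matching these two statements, after anchoring one absolute label through the $\beta_0^e/a$ correspondence, forces type $=$ insertion for both dots.

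The step I expect to be the main obstacle is precisely rule~I.2 of Algorithm~\ref{algo:insertionlabeling}, where the labels of the new dots depend on the label of the dot already occupying $R_j^{T^j}$, a dot lying in a column strictly to the left of $C_j^T$ and hence outside the reach of the downward inductive hypothesis. The resolution I would pursue is to recognise that this occupying dot is the twin $d_j^T$, and that its presence in $R_j^T$ rather than $R_{n+j}^T$ is governed, on the insertion side, by the fixed-point branch rule~2(a)i.\ of Definition~\ref{defi:deltainsertions}, and, on the pistol side, is exactly the configuration $d_{j,\min}^T = d_{n+j}^T$; Lemma~\ref{lem:sij1pluspetitquej2alorsaetb} then converts the leftward information extracted by rule~I.2 into the same-column information used by rule~II, closing the induction. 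The degenerate case $\delta_o = \delta_e$ (in which Algorithm~\ref{algo:insertionlabeling} resets $h_e$ to $0$ and produces an $\alpha_0^e$ dot, cf.\ Remark~\ref{rem:aproposdupistollabeling}(b) and Definition~\ref{defi:varphi}) and the various $\beta_0^e$ sub-cases will each require separate, careful bookkeeping.
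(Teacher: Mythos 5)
Your overall architecture --- downward induction on $j$ in the order Algorithm~\ref{algo:pistollabeling} processes columns, the dictionary between digital labels and insertion heights, and a case split along Rules II.1/II.2 with Lemma~\ref{lem:sij1pluspetitquej2alorsaetb} as the auxiliary tool --- is the paper's own. But the step you use to anchor case II.1-a) is circular as stated: you claim the dictionary ``$\beta_0^e$ on $C_{j'}^T \iff f(2j')=2j'$'' is furnished by Lemma~\ref{lem:pointfixesietseulementsibeta0e}. That lemma relates $\beta_0^e$ dots to fixed points of $\varphi(T)$, not of the input pistol $f$; at this point of the development nothing identifies $\varphi(T)$ with $f$, since $\varphi \circ \Phi = \mathrm{id}$ is exactly Proposition~\ref{prop:varphicircphi}, which is proved \emph{after}, and \emph{by means of}, the lemma you are proving. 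Because the branching of the insertion rules (Rule 2.(a)i.\ versus 2.(a)ii.\ of Definition~\ref{defi:deltainsertions}) is governed by $f$, the dictionary you need must involve $f$, and Lemma~\ref{lem:pointfixesietseulementsibeta0e} cannot supply it without circularity. The paper derives the needed implication directly: by the inductive hypothesis the dots of $C_{j'}^T$ carry insertion labels $a$ and $b$, hence $(l_o,l_e)=(a,b)$ at step $j'$ of Algorithm~\ref{algo:insertionlabeling}, hence the even dot's digital label is $h_e=\delta_e=f(2j')/2-j'$, and the presence of $\beta_0^e$ forces $f(2j')=2j'$. (Your route could be repaired by strengthening the induction to also carry ``$\varphi(T)$ agrees with $f$ on the columns $C_k^T$, $k>j$'' --- essentially folding the column-by-column computation of Proposition~\ref{prop:varphicircphi} into the induction --- but you would have to build that in explicitly; you do not.)

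The second gap is in your resolution of the Rule I.2 obstacle. The claim that the occupying dot of $R_j^{T^j}$ sitting in $R_j$ rather than $R_{n+j}$ ``is exactly the configuration $d_{j,\min}^T=d_{n+j}^T$'' is false: Rule I.2 fires whenever $d_j^T$ lies in a column to the left of $C_j^T$, and this is compatible with both $d_{j,\min}^T=d_j^T$ and $d_{j,\min}^T=d_{n+j}^T$, since the occupancy of $R_j$ says nothing about where $d_{n+j}^T$ sits (if anything, $d_j^T$ being placed early points to $d_{j,\min}^T=d_j^T$). Likewise, Remark~\ref{rem:aproposdupistollabeling}(g) dichotomizes on whether the two dots of $C_j^T$ have equal type labels (under ``no $\beta_0^e$''), which is not the dichotomy $d_{j,\min}^T=d_j^T$ versus $d_{j,\min}^T=d_{n+j}^T$, so the two ``relative'' statements you propose to match do not line up. What actually closes this case in the paper is: (i) the preliminary consequence of the inductive hypothesis and Lemma~\ref{lem:sij1pluspetitquej2alorsaetb} that for every $k>j$ with $d_{k,\min}^T=d_{n+k}^T$ the column $C_k^T$ carries a $\beta_0^e$ dot; and (ii) the observation that the dot literally occupying $R_j$ must have been plotted by Rule 2.(a)ii.\ of Definition~\ref{defi:deltainsertions}, so that whenever $f(2j)=2j$ its insertion label is forced to be $a$; these two facts then feed a sub-case analysis of Rules II.2-a), II.2-b)i.\ and II.2-b)ii.\ that your sketch does not reach.
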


\begin{proof} Assume that the Lemma~is true for the dots of the columns $C_{j+1}^T,C_{j+2}^T, \hdots,C_n^T$ for some $j \in [n]$. First of all, we prove that for all $k \in [j+1,n]$, if $d_{k,\min}^T = d_{n+k}^T$, then the even dot of the column $C_k^T$ is labeled with $\beta_0^e$~: if so, then by Lemma~\ref{lem:sij1pluspetitquej2alorsaetb}, the two dots of $C_k^T$ have different insertion labels, hence different type labels by hypothesis (because $k > j$). With precision, the dot whose type label is $\beta$ has been $(f,j)$-inserted in $T^{k}$ with the label $l_e = b$ at the height $h_e = f(2k)/2-k= 0$, so the pistol label of this dot is $\beta_0^e$ in view of Rule III.1- of Algorithm \ref{algo:pistollabeling}.

Now, let $d_i^T \in C_j^T$, $j'= j + h$ where $h \in [0,n-j]$ is the digital label of $d_i^T$, and $i' = \pi_j^{T}(i) \in \left\{j',n+j'\right\}$.

\begin{itemize}
\item If the type label of $d_i^T$ is defined by Rule II.1-a) of Algorithm~\ref{algo:pistollabeling}, then $f(2j') = 2j'$ : indeed, by Remark~\ref{rem:aproposdupistollabeling}(e), the type label of the odd dot of $C_{j'}^T$ is $\alpha$, so by hypothesis the insertion labels of the dots of $C_{j'}^T$ are $a$ and $b$, which implies that $(l_o,l_e) = (a,b)$ at the $j'$-th step of Algorithm \ref{algo:insertionlabeling}, hence $h_e = \delta_e = f(2j)/2-j$ is the digital label $0$ of the even dot of $C_{j'}^T$. Since $j'>j$, the dot $d_{i'}^T$ has been plotted by Rule 2.(a) of Definition~\ref{defi:deltainsertions}, and since $f(2j') = 2j'$, its insertion label equals $a$ (respectively $b$) if and only if $i' = j'$ (respectively $i' = n+j'$) following Rule 2.(a)ii. (respectively Rule 2.(a)i.) of Definition~\ref{defi:deltainsertions}, hence if and only if its type label is $\alpha$ (respectively $\beta$) in this context.

\item If the type label of $d_i^T$ is defined by Rule II.1-b) of Algorithm \ref{algo:pistollabeling}, let $(c,\bar{c})$ be defined as $(a,b)$ if the two dots of $C_{j'}^T$ have different type labels, as $(b,a)$ otherwise. The aim of this part is to prove that the insertion label of $d_i^T$ is $c$ (respectively $\bar{c}$) if its type label is $\gamma$ (respectively $\bar{\gamma}$), \textit{i.e.}, if $d_{i'}^T = d_{j',\min}^T$ (respectively if $d_{i'}^T \neq d_{j',\min}^T$). Now, if $d_{i'}^T = d_{j',\min}^T$ (respectively $d_{i'}^T \neq d_{j',\min}^T$), then in this context we know that $i'=j'$ (respectively $i'=n+j'$), because we showed at the beginning of the proof that if $d_{j',\min}^T = d_{n+j'}^T$ then one of the dots of $C_{j'}^T$ is labeled with $\beta_0^e$, which is not true by hypothesis. So the insertion label of $d_{i'}^T$ is $c$ (respectively $\bar{c}$) following Rule I.2- of Algorithm \ref{algo:insertionlabeling} (respectively following that very same rule and the fact that $d_{n+j'}^T$ has been plotted by Rule 2.(b) of Definition~\ref{defi:deltainsertions}).

\item If the type label of $d_i^T$ is defined by Rule II.2-a) of Algorithm \ref{algo:pistollabeling}, it is straightforward that the two dots of $C_j^T$ are $d_{i_o}^T$ (labeled with $\alpha_0^o$) and $d_{i_e}^T$ (labeled with $\beta_0^e$) where $i_o = \left(\pi_j^T\right)^{-1}(j)$ and $i_e = \left(\pi_j^T\right)^{-1}(n+j)$. Now, at the $j$-th step of Algorithm \ref{algo:insertionlabeling}, we prove that $(l_o,l_e) = (a,b)$ and that the insertion labels of $d_{i_o}^T$ and $d_{i_e}^T$ are respectively $a$ and $b$.
\begin{itemize}[label=*]
\item If $(l_o,l_e)$ has been defined by Rule I.1- of Algorithm \ref{algo:insertionlabeling}, it is straightforward that it is $(a,b)$, and by Rule 1.(a) (respectively Rule 1.(b)) of Definition~\ref{defi:deltainsertions}, the dot $d_{i_o}^T = d_j^T$ belongs to $C_j^T$ and is labeled with $a$ (respectively the dot $d_{i_e}^T$ was plotted by inserting a dot labeled with $b$ in the box $C_{j}^T \cap R_{n+j}^T$).
\item Else, let $d$ be the dot of $R_j^T$. It has been plotted by Rule 2.(a)ii. of Definition~\ref{defi:deltainsertions}, so, since $f(2j-1) = f(2j) = 2j$ by hypothesis, it implies that its insertion label is $a$, hence $(l_o,l_e) = (a,b)$ by Rule I.2-a) of Algorithm \ref{algo:insertionlabeling}. Consequently, by Rule 1.(b) of Definition~\ref{defi:deltainsertions}, the dot $d_{i_o}^T$ (respectively $d_{i_e}^T$) was plotted by inserting a dot labeled with $a$ (respectively $b$) in the box $C_j^T \cap R_{j}^T$ (respectively $C_j^T \cap R_{n+j}^T$).
\end{itemize}

\item If the type label of $d_i^T$ is defined by Rule II.2-b) of Algorithm \ref{algo:pistollabeling}, the insertion label of the other dot $d$ of $C_j^T$ being $a$ if and only if its type label is $t = \alpha$ has already been proved above.
\begin{itemize}[label=*]
\item If $t = \alpha$, suppose first that the type label of $d_i^T$ is $\alpha$, \textit{i.e.}, that $i \neq i' = j$. The equality $i \neq i'$ implies that $d_i^T$ has been plotting by Rule 1.(b) of Definition~\ref{defi:deltainsertions}, and the equality $i'=j$ then implies that its insertion label is $a$. Afterwards, if the type label of $d_i^T$ is $\beta$, then either $i = i'$ or $i' = n+j$. If $i'=n+j$, then $d_i^T$ has been plotted by Rule 1.(b) of Definition~\ref{defi:deltainsertions} with the insertion label $b$. Assume finally that $i=i'=j$. Then, at the $j$-th step of Algorithm \ref{algo:insertionlabeling}, the pair of labels $(l_o,l_e)$ has been defined by Rule I.1-, \textit{i.e.}, it equals $(a,b)$. Since the insertion label of $d$ is $a$ by hypothesis, then the insertion label of $d_i^T$ is $b$.
\item If $t = \beta$, since the digital label of $d$ is not $0$ by hypothesis, by Rule III. of Algorithm \ref{algo:pistollabeling} no dot of $C_j^T$ is labeled with $\beta_0^e$. Following the beginning of this proof, this implies that $d_{j,\min}^T = d_j^T$. In view of this, the type label of $d_i^T$ is $\alpha$ if and only if $i'=j$. If $i'=j$, either the pair of labels $(l_o,l_e)$ is defined by Rule I.1- of Algorithm \ref{algo:insertionlabeling} hence is $(a,b)$, and the insertion label of $d_i^T$ is $a$ because that of $d$ is $b$ by hypothesis, or $d_i^T$ has been plotted by Rule 1.(b) of Definition~\ref{defi:deltainsertions}, so its insertion label is $a$ because $i'=j$. Assume finally that $i' \neq j$, \textit{i.e.}, that $i'= n+j$ and the type label of $d_i^T$ is $\beta$. It has necessarily been plotted by Rule 1.(b) of Definition~\ref{defi:deltainsertions}, and its insertion label is $b$ because $i'=n+j$.
\end{itemize}
\end{itemize}
So the Lemma~is true by induction.
\end{proof}

\begin{prop}
\label{prop:varphicircphi}
The composition $\varphi \circ \Phi$ is the identity map of $\SP_n$.
\end{prop}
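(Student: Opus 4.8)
The plan is to fix $f \in \SP_n$, set $T = \Phi(f) \in \Tab_n$, and prove the two identities $\varphi(T)(2j-1) = f(2j-1)$ and $\varphi(T)(2j) = f(2j)$ separately for each $j \in [n]$; since both sides are maps $[2n] \to \{2,4,\hdots,2n\}$, this gives $\varphi \circ \Phi = \mathrm{id}_{\SP_n}$. Throughout, I write $(\delta_o,\delta_e) = (f(2j-1)/2 - j,\ f(2j)/2 - j)$ as in Algorithm~\ref{algo:insertionlabeling}, so that the target values are $f(2j-1) = 2(j+\delta_o)$ and $f(2j) = 2(j+\delta_e)$. Everything then reduces to understanding, for each $j$, the type, digital and parity labels of the two dots of $C_j^T$, since these are exactly the data from which Definition~\ref{defi:varphi} reconstructs $\varphi(T)(2j-1)$ and $\varphi(T)(2j)$.

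The first ingredient I would isolate is that the two dots of $C_j^T$ are precisely the two dots produced at the $j$-th step of Algorithm~\ref{algo:insertionlabeling}, namely the dot $(f,j)$-inserted at height $h_o = \delta_o$ and the dot $(f,j)$-inserted at height $h_e$ (with $h_e$ as in Part~II of that algorithm), and that a dot $(f,j)$-inserted at height $h$ carries digital label exactly $h$. Indeed, by construction of the $(f,j)$-insertion in Definition~\ref{defi:deltainsertions} together with the arrival convention of Definition~\ref{defi:insertiondansuntableau} and Proposition-Definition~\ref{prop:TPathstationnaire}, such a dot occupies the row whose $T$-path arrives at $j+h \in [j,n]$ or at $n+j+h \in [n+j,2n]$, and in either case Part~I of Algorithm~\ref{algo:pistollabeling} assigns it the digital label $h$. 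In particular the odd-step dot has digital label $\delta_o$ and the even-step dot has digital label $h_e$.

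The heart of the argument is then to match parity labels, using Lemma~\ref{lem:insertionlabelsequalpistollabels}, which identifies type label $\alpha$ (resp. $\beta$) with insertion label $a$ (resp. $b$). I would show that the dot inserted at the odd step always receives parity label $o$ and the dot inserted at the even step always receives parity label $e$, by running through the branches of Part~I of Algorithm~\ref{algo:insertionlabeling} against Rule~III of Algorithm~\ref{algo:pistollabeling}. When $(l_o,l_e)=(a,b)$ the two type labels differ, so Rule~III.1- puts $o$ on the $\alpha$-dot (the odd one) and $e$ on the $\beta$-dot; when $(l_o,l_e)=(b,b)$ (hence $\delta_o < \delta_e = h_e$) both dots are of type $\beta$ and Rule~III.2-b) puts $o$ on the smaller-digital-label dot, again the odd one; and when $(l_o,l_e)=(a,a)$ (hence $\delta_o \geq \delta_e$, with $h_e = \delta_e$ if $\delta_o > \delta_e$ and $h_e = 0$ if $\delta_o = \delta_e$) both dots are of type $\alpha$ and comparing $h_o = \delta_o$ with $h_e$ against the ordering $h_1 \leq h_2$ of Rule~III.2-a) again puts $o$ on the odd-step dot. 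Granting this matching, the odd dot of $C_j^T$ has digital label $\delta_o$, so Definition~\ref{defi:varphi} gives $\varphi(T)(2j-1) = 2(j+\delta_o) = f(2j-1)$; and for $\varphi(T)(2j)$ I split on whether the even dot is labeled $\alpha_0^e$, which by Part~II of Algorithm~\ref{algo:insertionlabeling} happens exactly in the subcase $l_e = a$, $\delta_o = \delta_e$ (where $h_e = 0$): then Definition~\ref{defi:varphi} yields $\varphi(T)(2j) = 2(j+h_o) = 2(j+\delta_e) = f(2j)$, while in every other case $h_e = \delta_e$ and $\varphi(T)(2j) = 2(j+\delta_e) = f(2j)$.

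The hard part will be the bookkeeping in this case analysis, and in particular confirming that the degenerate configurations which would break the parity matching cannot occur. For instance, I must rule out a single column carrying two type-$\alpha$ dots both of digital label $0$: this would contradict Remark~\ref{rem:aproposdupistollabeling}.(b) and is exactly what forces Rule~III.2- to use the ordering $h_1 \leq h_2$ rather than the insertion order. Here I expect to need Hypothesis $H(j)$ from Algorithm~\ref{algo:insertionlabeling}, Lemma~\ref{lem:sij1pluspetitquej2alorsaetb}, and the routing of fixed-point dots upward in Rule~2.(a)i. of Definition~\ref{defi:deltainsertions} (which prevents a $b$-labeled dot with a fixed-point target from landing in a low row). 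Once the per-column identities hold for all $j$, the equality $\varphi \circ \Phi = \mathrm{id}_{\SP_n}$ follows at once.
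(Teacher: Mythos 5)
Your proposal is correct and takes essentially the same route as the paper's own proof: read off the digital labels $h_o=\delta_o$ and $h_e$ of the two dots of $C_j^T$ from Part II of Algorithm~\ref{algo:insertionlabeling}, use Lemma~\ref{lem:insertionlabelsequalpistollabels} to translate insertion labels into type labels, and then match parities by checking the three branches $(l_o,l_e)\in\left\{(a,b),(b,b),(a,a)\right\}$ against Part III of Algorithm~\ref{algo:pistollabeling} before concluding via Definition~\ref{defi:varphi}. The one degenerate configuration you flag (an even dot labeled $\alpha_0^e$ arising with $\delta_o>\delta_e=0$) is indeed impossible for exactly the reason you give — Rule 2.(a)i. of Definition~\ref{defi:deltainsertions} forces a $b$-labeled dot landing in row $R_j$ to have $f(2j)\neq 2j$ — a verification the paper's proof leaves implicit, so your version is, if anything, slightly more careful on that point.
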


\begin{proof}
Let $f \in \SP_n$, $T = \Phi(f) \in \Tab_n$ and $g = \varphi(T) \in \SP_n$. We want to prove that $g = f$. Let $j \in [n]$. By Part II. of Algorithm \ref{algo:insertionlabeling} and Definition~\ref{defi:deltainsertions}, we know that one of the dots $d_o$ of $C_j^T$ has the digital label $h_o = \delta_o = f(2j-1)/2-j$, and that the other dot $d_e$ of $C_j^T$ has the digital label $h_e$ that has the following property :

\begin{itemize}
\item if $\delta_o = \delta_e$ and $l_e = a$, then $h_e=0$;
\item otherwise $h_e = \delta_e = f(2j)/2-j$.
\end{itemize}

Also, by Lemma~\ref{lem:insertionlabelsequalpistollabels}, the type label of $d_o$ (respectively $d_e$) is $\alpha$ if and only if $l_o = a$ (respectively $l_e = a$).

We prove now that the parity labels of $d_o$ and $d_e$ are $o$ and $e$ respectively, and, at the same time, that $g_{|\left\{2j-1,2j\right\}} = f_{|\left\{2j-1,2j\right\}}$.
\begin{enumerate}[label = \arabic* -]
\item If their type labels are different, we know that their insertion labels are different, and by Part I. of Algorithm \ref{algo:insertionlabeling} this implies that $l_o = a$ and $l_e = b$, hence the type labels of $d_o$ and $d_e$ are $\alpha$ and $\beta$ respectively. As a result, by Part III.1- of Algorithm \ref{algo:pistollabeling}, the parity labels of $d_o$ and $d_e$ are $o$ and $e$. Also, since $l_e \neq a$, the digital labels of $d_o$ and $d_e$ are $\delta_o$ and $\delta_e$ respectively, so, by Definition~\ref{defi:varphi}, we have $g(2j-1) = 2(j+\delta_o) = f(2j-1)$ and $g(2j) = 2(j+\delta_e) = f(2j)$.

\item Otherwise, the insertion labels of $d_o$ and $d_e$ are the same, so they have been defined by Rule I.2-(b) of Algorithm \ref{algo:insertionlabeling}.

\begin{enumerate}[label = \arabic*)]

\item If the type label of $d_o$ and $d_e$ is $\beta$, their insertion label is $b$, so it has been defined by Rule I.2-(b)i. of Algorithm \ref{algo:insertionlabeling}. In particular $\delta_o < \delta_e$, and since in that case $\delta_o$ and $\delta_e$ are the digital labels of $d_o$ and $d_e$ respectively, by Rule III.2-b) of Algorithm \ref{algo:pistollabeling} the parity labels of $d_o$ and $d_e$ are $o$ and $e$ respectively, and by Definition~\ref{defi:varphi}, we have $g(2j-1) = 2(j+\delta_o) = f(2j-1)$ and $g(2j) = 2(j+\delta_e) = f(2j)$.

\item If their type labels are $\alpha$, their insertion label is $a$, which has been defined by Rule I.2-(b)ii. of Algorithm \ref{algo:insertionlabeling}. In particular $\delta_o \geq \delta_e$. Let $h_o$ and $h_e$ be the digital labels of $d_o$ and $d_e$ respectively. Since we are in the context III.2- of Algorithm \ref{algo:pistollabeling}, we know that $h_o \neq h_e$. If $\delta_o = \delta_e$ then $(h_o,h_e) = (\delta_o,0)$, otherwise $(h_o,h_e) = (\delta_o,\delta_e)$, so in any case $h_o > h_e$ and by Rule III.2-a) of Algorithm \ref{algo:pistollabeling} the parity labels of $d_o$ and $d_e$ are $o$ and $e$ respectively. Afterwards,

\begin{itemize}

\item If $\delta_o = \delta_e$, hence $(h_o,h_e) = (\delta_o,0)$, then by Definition~\ref{defi:varphi} we have $g(2j-1) = 2(j+\delta_o) = f(2j-1)$ and $g(2j) = 2(j+\delta_o) = 2(j+\delta_e) = f(2j)$;

\item otherwise $(h_o,h_e) = (\delta_o,\delta_e)$ and, by Definition~\ref{defi:varphi}, we have $g(2j-1) = 2(j+\delta_o) = f(2j-1)$ and $g(2j) = 2(j+\delta_e) = f(2j)$.

\end{itemize}

\end{enumerate}

\end{enumerate}
So $g_{|\left\{2j-1,2j\right\}} = f_{|\left\{2j-1,2j\right\}}$ for all $j \in [n]$.
\end{proof}

Proposition~\ref{prop:varphicircphi} implies that the maps $\phi : \SP_n \rightarrow \T_n$ and $\varphi~:~\T_n \rightarrow \SP_n$ are respectively injective and surjective. We intend now to make the image of $\Phi$ explicit.

\begin{defi}
Let $T \in \T_n$, we define $\mathcal{S}(T) \subset [n]$ as the set of integers $i \in [n]$ such that :
\begin{itemize}
\item the dot $d_{n+i}^T$ is not free;
\item the twin dots $d_i^T$ and $d_{n+i}^T$ are not in the same column;
\item no dot of $C_i^T$ has the pistol label $\beta_0^e$.
\end{itemize}
For all such $i$, we define $\mu_T(i)$ as $1$ if $d_{i,\min}^T = d_i^T$, as $-1$ otherwise.

Afterwards, we define $\mathcal{C}(T) \subset [n-1]$ as the set of integers $j \in [n]$ such that $C_j^T$ contains twin dots, say, the dots $d_i^T$ and $d_{n+i}^T$ for some $i \in [n]$, such that no dot of $C_i^T$ is labeled with $\beta_0^e$. For all such $j$, we define $t_T(j)$ as the type label of $d_i^T$.
\end{defi}

\begin{rem}
\label{rem:sommedefreeplusSplusCegalegrounded}
For all $T \in \T_n$, by Proposition~\ref{prop:egalitedesstats}, we have the formula~$$\fr(T) + \# \mathcal{S}(T) + \# \mathcal{C}(T) = \ngr(T) = \ndf(f)$$
where $f = \varphi(T)$.
\end{rem}

\begin{rem}
\label{rem:detailssurlimagedevarphi}
In the proof of Lemma~\ref{lem:insertionlabelsequalpistollabels}, we showed that for all $T$ of the kind $\phi(f)$ for some $f \in \SP_n$, and for all $i \in [n]$, if $d_{i,\min}^T = d_{n+i}^T$, then one of the dots of $C_i^T$ is labeled with $\beta_0^e$, hence $i \not\in \mathcal{S}(T)$.
\end{rem}

\begin{defi}
\label{defi:soustableaux}
Let $\tilde{\T}_n$ be the subset of $\T_n$ made of the tableaux $T$ that have the following properties : for all $i \in [n]$,
\begin{enumerate}[label=(\alph*)]
\item if $i \in \mathcal{S}(T)$, then $d_{i,\min}^T = d_i^T$;
\item if $j \in \mathcal{C}(T)$, then $t_T(j) = \alpha$.
\end{enumerate}
\end{defi}

\begin{lem}
\label{lem:imagePhiinclus}
The image of $\Phi : \SP_n \rightarrow \Tab_n$ is a subset of $\tilde{\Tab}_n$.
\end{lem}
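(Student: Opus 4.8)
The plan is to check directly that $T = \Phi(f)$ satisfies the two defining conditions (a) and (b) of $\tilde{\Tab}_n$ from Definition~\ref{defi:soustableaux}, treating them separately and exploiting the fact that both dots of any column $C_j^T$ are inserted at the single step $j$ of Algorithm~\ref{algo:insertionlabeling}.

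Condition (a) is essentially already in hand. If $i \in \mathcal{S}(T)$, then by definition no dot of $C_i^T$ is labeled with $\beta_0^e$; but Remark~\ref{rem:detailssurlimagedevarphi} records that whenever $T = \Phi(f)$ and $d_{i,\min}^T = d_{n+i}^T$, one of the dots of $C_i^T$ carries the label $\beta_0^e$. Read contrapositively, $i \in \mathcal{S}(T)$ forces $d_{i,\min}^T \neq d_{n+i}^T$, i.e. $d_{i,\min}^T = d_i^T$, which is exactly (a). So I would dispose of this condition in one line.

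Condition (b) is where the real work lies. Fix $j \in \mathcal{C}(T)$, so that $C_j^T$ contains a pair of twin dots $d_i^T, d_{n+i}^T$ with no dot of $C_i^T$ labeled $\beta_0^e$; I must show that the type label of $d_i^T$ is $\alpha$. Writing $(\delta_o,\delta_e) = (f(2j-1)/2-j, f(2j)/2-j)$, the two dots of $C_j^T$ are inserted at step $j$, the odd one at height $h_o = \delta_o$ and the even one at height $h_e$. The first step is to observe that a pair landing in the twin rows $R_i^T$ and $R_{n+i}^T$ must share the same base index $i = j+h_o = j+h_e$, hence $h_o = h_e$; inspecting the definition of $h_e$ in Part~II of Algorithm~\ref{algo:insertionlabeling}, this equality forces $\delta_o = \delta_e$. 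Next I would rule out the degenerate height $h_o = h_e = 0$: it would give $i = j$ and $\delta_e = 0$, hence $f(2j) = 2j$, and then Lemma~\ref{lem:pointfixesietseulementsibeta0e} would place a $\beta_0^e$ dot in $C_j^T = C_i^T$, contradicting $j \in \mathcal{C}(T)$. So $h := h_o = h_e > 0$ and $i = j+h > j$.

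With $\delta_o = \delta_e$ in hand, Part~I of Algorithm~\ref{algo:insertionlabeling} yields $l_o = a$, since the only branch producing $l_o = b$ requires $\delta_o < \delta_e$. Finally I would run the two $(f,j)$-insertions of Definition~\ref{defi:deltainsertions}: the row $R_i^T$ is still empty at step $j$ (because $d_i^T$ ends up in $C_j^T$), so inserting the odd dot labeled $l_o = a$ at height $h > 0$ falls under Rule~2.(a)ii.\ — Rule~2.(a)i.\ needs $l = b$ — and the odd dot therefore lands in $R_i^T$, forcing the even dot into $R_{n+i}^T$. Thus $d_i^T$ is the odd dot and its insertion label is $a$, so by Lemma~\ref{lem:insertionlabelsequalpistollabels} its type label is $\alpha$, which is $t_T(j) = \alpha$. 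The main obstacle is precisely this bookkeeping for (b): correctly deducing that twin dots in a common column are inserted at equal heights with $\delta_o = \delta_e$, and then threading this back through Part~I (to pin down $l_o = a$) and through Definition~\ref{defi:deltainsertions} (to place the $a$-labeled dot in the lower row); everything else reduces immediately to Remark~\ref{rem:detailssurlimagedevarphi}, Lemma~\ref{lem:pointfixesietseulementsibeta0e} and Lemma~\ref{lem:insertionlabelsequalpistollabels}.
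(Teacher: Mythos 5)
Your proof is correct, and while its endgame coincides with the paper's, the middle of your argument for condition (b) takes a genuinely different route. Both you and the paper dispose of condition (a) of Definition~\ref{defi:soustableaux} by quoting Remark~\ref{rem:detailssurlimagedevarphi}, and both finish (b) identically: show that the dot inserted with label $a$ at step $j$ lands in the lower twin row $R_i^T$, then convert insertion label to type label via Lemma~\ref{lem:insertionlabelsequalpistollabels}. The difference is how the pivotal fact $l_o = a$ is obtained. The paper works on the pistol-labelling side: the twin dots of $C_j^T$ share the digital label $i-j$, so Part~III of Algorithm~\ref{algo:pistollabeling} forces their type labels to differ, and Lemma~\ref{lem:insertionlabelsequalpistollabels} (used from type labels to insertion labels) gives $(l_o,l_e)=(a,b)$; this treats $i=j$ and $i>j$ uniformly through Rules~1.(a) and 2.(a)ii.\ of Definition~\ref{defi:deltainsertions}. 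You instead stay entirely inside Algorithm~\ref{algo:insertionlabeling}: twin rows force $h_o=h_e$, Part~II converts this into $\delta_o=\delta_e$, and Part~I then rules out $l_o=b$; the price is a separate exclusion of the case $h=0$, which you handle via Lemma~\ref{lem:pointfixesietseulementsibeta0e} and the definition of $\mathcal{C}(T)$ --- note that this step tacitly uses $\varphi(T)=f$, i.e.\ Proposition~\ref{prop:varphicircphi}, which is legitimately available at this point but deserves an explicit citation. Your version never touches the parity rules of Part~III and invokes Lemma~\ref{lem:insertionlabelsequalpistollabels} only once, in the forward direction, so its dependency chain is somewhat cleaner; the paper's is shorter because the uniform treatment of $i=j$ spares the extra case. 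One half-sentence would strengthen both write-ups equally: the claim that an insertion at height $h$ lands in row $j+h$ or $n+j+h$ silently uses that $i$ and $n+i$ are fixed points of $\pi_j^{T^j}$ (their first $j-1$ boxes being empty, since $d_i^T$ and $d_{n+i}^T$ end up in $C_j^T$), which is what identifies the plotted rows with the target indices of Definition~\ref{defi:deltainsertions}.
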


\begin{proof}
Let $f \in \SP_n$ and $T = \Phi(f) \in \Tab_n$. The tableau $T$ having the property (a) of Definition~\ref{defi:soustableaux} comes from Remark~\ref{rem:detailssurlimagedevarphi}. Now, let $j \in \mathcal{C}(T)$ and $i \geq j$ such that $C_j^T$ contains the twin dots $d_i^T$ and $d_{n+i}^T$. They both have the same digital label $i-j$ (which implies that $h_o = h_e = i-j$ at the $j$-th step of Algorithm \ref{algo:insertionlabeling}), so by Part III. of Algorithm~\ref{algo:pistollabeling} their type labels are different. In view of Lemma~\ref{lem:insertionlabelsequalpistollabels}, this implies that their insertion labels are $l_o = a$ and $l_e = b$. Now, at the beginning of the $j$-th step of Algorithm \ref{algo:insertionlabeling}, the row $R_i^{T^j}$ is empty, so the $(f,j)$-insertion in $T^j$ of a dot labeled with $l_o = a$ at the height $h_o = i-j$ leads to plotting a dot labeled with $a$ in the box $C_j^{T^j} \cap R_i^{T^j}$ following Rule 1.(a) or Rule 2.(a)ii. of Definition~\ref{defi:deltainsertions}. So $d_i^T$ is the dot of $C_j^T$ whose insertion label is $a$, and its type label is $\alpha$ by Lemma~\ref{lem:insertionlabelsequalpistollabels}, hence $T$ has the property (b) of Definition~\ref{defi:soustableaux}.
\end{proof}

\begin{defi}
\label{defi:epsilon}
For all $T \in \T_n$ and $j \in [n]$, we define $\epsilon_j^T$ as the set of the pistol labels of the dots of $C_j^T$.
\end{defi}

\begin{lem}
\label{lem:uniquefacondechangerdetypelabel}
Let $f \in \SP_n$ and $(T,T') \in \varphi^{-1}(f)^2$. Let $j \in [n]$ such that $\epsilon_j^T \neq \epsilon_j^{T'}$. There exists $k \in \mathcal{C}(T) \cap \mathcal{C}(T') \cap [j-1]$ such that $t_T(k) \neq t_{T'}(k)$.
\end{lem}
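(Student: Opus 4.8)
The plan is to run the pistol labeling (Algorithm~\ref{algo:pistollabeling}) on $T$ and $T'$ in parallel and to exploit that both runs terminate on the same $f$.

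\emph{Reduction to a single type flip.} Since $f=\varphi(T)=\varphi(T')$, Definition~\ref{defi:varphi} forces the odd dot of $C_j$ in each tableau to carry the digital label $\delta_o:=f(2j-1)/2-j$, and the even dot to carry either $\delta_e:=f(2j)/2-j$ or (only when its pistol label is $\alpha_0^e$) the digital label $0$. Because the parity labels are recovered from the type and digital labels through Part III.\ of the algorithm, a short inspection of the admissible label pairs compatible with $(\delta_o,\delta_e)$ shows that $\epsilon_j^T\neq\epsilon_j^{T'}$ can occur only through a single type label switching between $\alpha$ and $\beta$: that of the even dot when $\delta_o\geq\delta_e$, that of the odd dot when $\delta_o<\delta_e$. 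Write $d$ for this dot, let $h\in[0,n-j]$ be its digital label and set $j^\star:=j+h$; by Remark~\ref{rem:aproposdupistollabeling}(f) the twin dots at rows $j^\star$ and $n+j^\star$ of each tableau lie in the columns $C_1,\dots,C_j$.

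\emph{Locating the cause.} The type of $d$ was assigned in Part II.: by Rule II.1 when $h>0$ and by Rule II.2 when $h=0$. First I would dispose of the rigid branch, in which a dot of $C_{j^\star}$ is labeled $\beta_0^e$: there the type of $d$ is determined by the arrival $i'=\pi_j^T(\cdot)\in\{j^\star,n+j^\star\}$ of its $T$-path alone, and combining Lemma~\ref{lem:pointfixesietseulementsibeta0e} with $\varphi(T)=\varphi(T')$ pins down the relevant fixed-point data so that no flip is possible. Hence we may assume the $(\gamma,\bar\gamma)$ branch, where $(\gamma,\bar\gamma)=(\alpha,\beta)$ or $(\beta,\alpha)$ according as the two dots of $C_{j^\star}$ carry distinct or equal types, and the type of $d$ equals $\gamma$ exactly when its $T$-path reaches $d_{j^\star,\min}$ (Definition~\ref{defi:twindots}).

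\emph{Transferring the flip to a $\mathcal{C}$-column.} The engine of the argument is the identity furnished by Remark~\ref{rem:aproposdupistollabeling}(g): when no dot of $C_{j^\star}$ is $\beta_0^e$, the type of $d_{j^\star,\min}$ is itself $\gamma$. Thus, if the twins at rows $j^\star,n+j^\star$ occupy a common column $C_k$ in both $T$ and $T'$, then necessarily $k<j$ (they cannot both fill $C_j$, as this would make both digital labels equal to $h$ in both tableaux, contradicting $\epsilon_j^T\neq\epsilon_j^{T'}$), one has $k\in\mathcal{C}(T)\cap\mathcal{C}(T')$ and $d_{j^\star,\min}=d_{j^\star}$, so $t_T(k)=\gamma_T$ and $t_{T'}(k)=\gamma_{T'}$. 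A parity count then shows that when the two $T$-paths arrive on the same side, the flip of $d$'s type forces $\gamma_T\neq\gamma_{T'}$, that is $t_T(k)\neq t_{T'}(k)$, which is the desired conclusion.

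\emph{Main obstacle.} What remains, and is the real content, is to reach this favourable configuration. I expect the difficulty to concentrate in two places: showing that one may assume the two $T$-paths arrive on the same side (otherwise the discrepancy is a \emph{reach}-discrepancy, which must be traced to a strictly earlier column), and showing that the twins at rows $j^\star,n+j^\star$ may be assumed to share a column rather than be split between distinct columns $C_a,C_b$. I would treat both by a descent anchored at $C_n$, whose labels are always $\alpha_0^o,\beta_0^e$ by Remark~\ref{rem:aproposdupistollabeling}(d), feeding a split or a reach-discrepancy back to a column where $\epsilon^T$ and $\epsilon^{T'}$ already disagree, supplemented by the exclusion of the degenerate situations $d_{i,\min}=d_{n+i}$ (in the spirit of Remark~\ref{rem:detailssurlimagedevarphi}) and a direct treatment of the $h=0$ case through Rule II.2. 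The delicate point is to orchestrate this casework so that the column produced is genuinely smaller than $j$ and simultaneously lies in $\mathcal{C}(T)\cap\mathcal{C}(T')$, and to make sure the descent terminates.
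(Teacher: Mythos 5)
Your opening reduction is sound and coincides with the paper's first move: using Definition~\ref{defi:varphi} and Part III of Algorithm~\ref{algo:pistollabeling}, one checks that for a fixed $f$ the column $C_j$ admits exactly one mixed-type labeling and at most one uniform-type labeling compatible with $(\delta_o,\delta_e)$, so $\epsilon_j^T \neq \epsilon_j^{T'}$ forces, up to swapping $T$ and $T'$, that $C_j^T$ is mixed and $C_j^{T'}$ is uniform. But from there your proposal has a genuine gap, and you say so yourself: the entire mechanism that produces the column $k$ --- the cases where the twins $d_{j^\star}^T$, $d_{n+j^\star}^T$ do not share a column, or where the two $T$-paths arrive on opposite sides --- is deferred to an unspecified ``descent anchored at $C_n$'' whose invariant, direction and termination are never set up. That deferred part is not a technicality; it is the proof. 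Moreover, your dismissal of the ``rigid branch'' (a dot of $C_{j^\star}$ labeled $\beta_0^e$) is unjustified: Lemma~\ref{lem:pointfixesietseulementsibeta0e} does transfer the fixed point $f(2j^\star)=2j^\star$ from $T$ to $T'$, but Rule II.1-a) then ties the type of the flipping dot to the arrival of its $T$-path, and nothing in your sketch prevents that arrival from being $j^\star$ in $T$ and $n+j^\star$ in $T'$; excluding this requires the same missing machinery.

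The paper closes both gaps with one device that your setup narrowly misses: it never traces the flipping dot's type through Rules II.1/II.2 at all, and it works with the twins of index $j$ itself ($d_j$ and $d_{n+j}$) rather than of $j^\star = j+h$. Remark~\ref{rem:aproposdupistollabeling}(g), applied to column $j$, converts ``mixed in $T$, uniform in $T'$'' directly into a type discrepancy on a single dot: $d_{j,\min}^T$ has type $\alpha$ while $d_{j,\min}^{T'}$ has type $\beta$ (this uses that neither column contains a dot labeled $\beta_0^e$, which follows from Remark~\ref{rem:aproposdupistollabeling}(e) together with Lemma~\ref{lem:pointfixesietseulementsibeta0e}). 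Lemma~\ref{lem:ouestdmin} then places $d_{j,\min}^T$ and $d_{j,\min}^{T'}$ in a common column $C_{j'}$, and a short argument with Rule II.2-b) shows $j'<j$. Now either $j' \in \mathcal{C}(T)$, in which case the twins of $C_{j'}$ are forced to be $d_j$, $d_{n+j}$ in both tableaux and $t_T(j')=\alpha \neq \beta = t_{T'}(j')$, which is the conclusion with $k=j'$; or $j' \notin \mathcal{C}(T)$, in which case one shows $\epsilon_{j'}^T \neq \epsilon_{j'}^{T'}$ and repeats the whole argument with $j$ replaced by $j'$. The descent variable is just this strictly decreasing column index, so termination is automatic --- exactly the bookkeeping your ``main obstacle'' paragraph leaves open.
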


\begin{proof}
Suppose first that the dots of $C_j^T$ (respectively $C_j^{T'}$) have the same type label. If, with precision, the four dots of $C_j^T$ and $C_j^{T'}$ have the same type label, since $\varphi(T) = \varphi(T') = f$, in view of Definition~\ref{defi:varphi} the set of the digital labels of the dots of $C_j^T$ equals the set of the digital labels of the dots of $C_j^{T'}$, and by Part III.2- of Algorithm \ref{algo:pistollabeling} we have in fact $\epsilon_j^T = \epsilon_j^{T'}$, which is false by hypothesis. So, should $T$ and $T'$ be transposed, we can suppose that the dots of $C_j^T$ (respectively $C_j^{T'}$) have the type label $\alpha$ (respectively $\beta$). By Part III.2- of Algorithm \ref{algo:pistollabeling}, we have $\epsilon_j^T = \left\{\alpha_{h_o}^o, \alpha_{h_e}^e\right\}$ with $h_o > h_e$, and $\epsilon_j^{T'} = \left\{\beta_{h'_o}^o,\beta_{h'_e}^e\right\}$ with $h'_o < h'_e$. Following Definition~\ref{defi:varphi}, this implies that $f(2j-1) \geq f(2j)$ and $f(2j-1) < f(2j)$, which is absurd.

So, should $T$ and $T'$ be transposed, we can suppose that the dots of $C_j^T$ have different type labels, and by Part III.1- of Algorithm \ref{algo:pistollabeling}, we have $\epsilon_j^T = \left\{\alpha_{h_o}^o, \beta_{h_e}^e\right\}$ for some $(h_o,h_e)\in [0,n-j]^2$, which, following Definition~\ref{defi:varphi}, implies that $f(2j-1) = 2(j+h_o)$ and $f(2j) = 2(j+h_e)$. Now, if the dots of $C_j^{T'}$ had different type labels, then by Part III.1- of Algorithm \ref{algo:pistollabeling} and Definition~\ref{defi:varphi} we would have $\epsilon_j^T = \epsilon_j^{T'}$, so it is necessary that the dots of $C_j^{T'}$ have the same type label. This implies several things, enumerated hereafter.

\begin{enumerate}[label=(\Alph*)]

\item Since no dot of $C_j^{T'}$ is labeled with $\beta_0^e$ in view of Remark~\ref{rem:aproposdupistollabeling}(e), then by Lemma~\ref{lem:pointfixesietseulementsibeta0e} the integer $2j$ is not a fixed point of $f$, hence no dot of $C_j^T$ is labeled with $\beta_0^e$ (\textit{i.e.}, we have $h_e > 0$).

\item Suppose that $d_{j,\min}^{T'} \in C_j^{T'}$ (hence $d_{j,\min}^{T'} = d_j^{T'}$). Then its type label is defined by Rule II.2- of Algorithm \ref{algo:pistollabeling}. Since no dot of $C_j^{T'}$ has the pistol label $\beta_0^e$, then it is with precision defined by Rule II.2-b). Since it is $d_{j,\min}^{T'}$, whether it is defined by Rule II.2-b)i. or Rule II.2-b)ii., the type labels of the two dots of $C_j^{T'}$ are different, which is absurd. So $d_{j,\min}^{T'}$ belongs to a column $C_{j'}^{T'}$ with $j'<j$.

\item Consequently, by Lemma~\ref{lem:ouestdmin}, we know that $d_{j,\min}^T \in C_{j'}^T$.

\item Following Remark~\ref{rem:aproposdupistollabeling}(e), the type label of $d_{j,\min}^T$ is $\alpha$, whereas the type label of $d_{j,\min}^{T'}$ is $\beta$.

\end{enumerate}

Now, if $j' \in \mathcal{C}(T)$, then with precision $d_{j,\min}^T = d_j^T$ because the other dot of $C_{j'}^T$ is $d_{n+j}^T$ (in particular $t_{j'}(T) = \alpha$ following (D)), also $\epsilon_{j'}^T = \left\{\alpha_{j-j'}^o, \beta_{j-j'}^e\right\}$ in view of Rule II.1-b) and Rule III.1- of Algorithm \ref{algo:pistollabeling}, and by Definition~\ref{defi:varphi} we have $f(2j'-1) = f(2j') = 2j$. Still by Definition~\ref{defi:varphi}, since one of the dots of $C_{j'}^{T'}$ (the dot $d_{j,\min}^{T'}$ by (D)) doesn't have the type label $\alpha$, then the two dots of $C_{j'}^{T'}$ have the same digital label $j-j'$ (incidentally, by Part III.2- of Algorithm \ref{algo:pistollabeling} this implies that they have different type labels, \textit{i.e.}, that the dot of $C_{j'}^{T'}$ that is not $d_{j,\min}^{T'}$ has the type label $\alpha$). Let $d$ be the twin dot of $d_{j,\min}^{T'}$, \textit{i.e.}, the dot defined as $d_j^{T'}$ if $d_{j,\min}^{T'} = d_{n+j}^{T'}$, as $d_{n+j}^{T'}$ if $d_{j,\min}^{T'} = d_{j}^{T'}$. Since $d_{j,\min}^{T'} \in C_{j'}^{T'}$ and the other dot of $C_{j'}^{T'}$ has the digital label $j-j'$, by Definition~\ref{defi:TPath} this implies that $d$ is located in a column $C_{j''}^{T'}$ with $j'' \leq j'$. By Definition~of $d_{j,\min}^{T'}$, we have $j'' \geq j'$ hence $j'' = j'$. In other words, the integer $j'$ belongs to $\mathcal{C}(T')$ and $d_{j,\min}^{T'} = d_j^{T'}$ has the type label $\beta$ in view of (D), hence $t_{j'}(T') = \beta \neq t_{j'}(T)$, which is exactly the statement of the lemma.

Otherwise (if $j' \not\in \mathcal{C}(T)$), suppose that $\epsilon_{j'}^T = \epsilon_{j'}^{T'}$. Since the type labels of $d_{j,\min}^T \in C_{j'}^T$ and $d_{j,\min}^{T'} \in C_{j'}^{T'}$ are respectively $\alpha$ and $\beta$ and their digital label $j-j'$, in view of Part III. of Algorithm \ref{algo:pistollabeling} this implies that $\epsilon_{j'}^T = \epsilon_{j'}^{T'} = \left\{\alpha_{j-j'}^o, \beta_{j-j'}^e\right\}$. In particular, the two dots of $C_j^T$ have the same digital label $j-j'$. By Definition~\ref{defi:TPath}, this means that both the twin dots $d_j^T$ and $d_{n+j}^T$ are located in columns $C_{j''}^T$ with $j'' \leq j'$. By Definition~of $d_{j,\min}^T \in C_{j'}^T$, this forces those two dots to be the dots of $C_{j'}^T$, which contradicts $j' \not\in \mathcal{C}(T)$ since no dot of $C_j^T$ is labeled with $\beta_0^e$ in view of (A). So, necessarily $\epsilon_{j'}^T \neq \epsilon_{j'}^{T'}$ and we are in the situation of the beginning of the proof with $j$ being replaced by $j'$. This produces some integer $j^{(2)} \in [j'-1]$ such that $d_{j',\min}^T \in C_{j^{(2)}}^T$ and $d_{j',\min}^{T'} \in C_{j^{(2)}}^{T'}$ do not have the same type label. If the statement of the Lemma~is false, then it in fact produces a strictly decreasing sequence of integers $(j^{(2)},j^{(3)},\hdots) \in [n]^{\mathbb{N}}$, which is absurd. So the Lemma~is true.
\end{proof}

\begin{prop}
\label{prop:auplusunelementdeTtilde}
The map $\varphi_{|\tilde{T}_n}$ is injective.
\end{prop}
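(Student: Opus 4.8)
The plan is to take two tableaux $T, T' \in \tilde{\T}_n$ with $\varphi(T) = \varphi(T') = f$ and to show $T = T'$ in two stages: first that they carry the same pistol labels in every column, i.e.\ $\epsilon_j^T = \epsilon_j^{T'}$ for all $j \in [n]$, and then that this common labeling forces the dots into the same boxes.

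The first stage is immediate from the preceding lemma. Suppose, for contradiction, that $\epsilon_j^T \neq \epsilon_j^{T'}$ for some $j$. Since $T$ and $T'$ both lie in $\varphi^{-1}(f)$, Lemma~\ref{lem:uniquefacondechangerdetypelabel} yields an index $k \in \mathcal{C}(T) \cap \mathcal{C}(T') \cap [j-1]$ with $t_T(k) \neq t_{T'}(k)$. But property~(b) of Definition~\ref{defi:soustableaux}, applied to both $T$ and $T'$, forces $t_T(k) = \alpha = t_{T'}(k)$, a contradiction. Hence $\epsilon_j^T = \epsilon_j^{T'}$ for every $j \in [n]$.

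For the second stage I would induct on $j$ from $1$ to $n$, proving $C_j^T = C_j^{T'}$. Assuming $C_1^T = C_1^{T'}, \dots, C_{j-1}^T = C_{j-1}^{T'}$, the bijection $\pi_j := \pi_j^T = \pi_j^{T'}$ is the same in both tableaux, since by Proposition-Definition~\ref{prop:TPathstationnaire} it depends only on the first $j-1$ columns. Reading the common set $\epsilon_j := \epsilon_j^T = \epsilon_j^{T'}$, each of the two dots of $C_j$ acquires a digital label $h$, hence a target $j' = j+h$, and its row must be $\pi_j^{-1}(j')$ (the \emph{low} arrival) or $\pi_j^{-1}(n+j')$ (the \emph{high} arrival). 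If the two dots share the same digital label, then, $\pi_j$ being a bijection and the dots being distinct, they must occupy precisely the boxes $C_j^T \cap R_{\pi_j^{-1}(j')}^T$ and $C_j^T \cap R_{\pi_j^{-1}(n+j')}^T$ in both tableaux, so $C_j^T = C_j^{T'}$ at once. When the digital labels differ, I must instead decide, from the type label alone, whether each dot takes its low or its high arrival.

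The heart of the argument, and the step I expect to be the main obstacle, is this conversion of type labels into arrivals. Tracking Rule~II of Algorithm~\ref{algo:pistollabeling}, the interpretation of a type label depends only on data shared by $T$ and $T'$ --- namely whether $C_{j'}$ carries a $\beta_0^e$ dot and whether its two dots have equal type labels, both of which are read off from the equal sets $\epsilon_{j'}^T = \epsilon_{j'}^{T'}$, together with whether the leftmost twin $d_{j',\min}$ is the low or the high twin. By Lemma~\ref{lem:ouestdmin} the column of $d_{j',\min}^T$ is determined by $f$ alone and is at most $j$; when it is strictly less than $j$ the twin sits in an already-reconstructed column and its low/high status is common to $T$ and $T'$, while the remaining case places $d_{j',\min}^T$ in the very column being built. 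It is precisely here, and in the boundary cases of Rule~II.2 where the type label does not by itself separate the low from the high arrival, that the defining properties~(a) and~(b) of $\tilde{\T}_n$ must be invoked to single out the same arrival in both tableaux. Carrying out this case analysis --- verifying that membership in $\tilde{\T}_n$ removes every residual ambiguity, so that $T \mapsto (\epsilon_j^T)_{j}$ is injective on $\tilde{\T}_n$ --- completes the induction, giving $C_j^T = C_j^{T'}$ for all $j$ and hence $T = T'$.
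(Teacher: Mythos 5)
Your proposal reproduces the paper's own proof: stage one (deriving $\epsilon_j^T = \epsilon_j^{T'}$ for all $j$ from Lemma~\ref{lem:uniquefacondechangerdetypelabel} together with property (b) of Definition~\ref{defi:soustableaux}) and stage two (column-by-column induction giving $\pi_j^T = \pi_j^{T'}$, then matching dots by pistol label and using property (a), i.e.\ $d_{j',\min} = d_{j'}$, to force equal arrivals) are exactly the paper's two steps, invoking the same lemmas. The case analysis you defer at the end is precisely the part the paper itself dispatches as ``straightforward in each case in view of $d_{j',\min}^{T_1} = d_{j'}^{T_1}$ and $d_{j',\min}^{T_2} = d_{j'}^{T_2}$,'' so your write-up is at the same level of completeness as the published argument.
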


\begin{proof}
Let $(T_1,T_2) \in \left( \tilde{T}_n \right)^2$ such that $\varphi(T_1) = \varphi(T_2) =: f \in \SP_n$. By Lemma~\ref{lem:uniquefacondechangerdetypelabel} and the property (b) of Definition~\ref{defi:soustableaux}, we know that $\epsilon_j^{T_1} = \epsilon_j^{T_2}$ for all $j \in [n]$. Let $\mathcal{H}(j)$ be the induction hypothesis that for all $k \in [j-1]$ and $i \in [k,2n]$, the dot $d_i^{T_1}$ belongs to $C_k^{T_1}$ only if $d_i^{T_2}$ belongs to $C_k^{T_2}$. Hypothesis $\mathcal{H}(1)$ is obviously true. Suppose that Hypothesis $\mathcal{H}(j)$ is true for some $j \in [n-1]$. The fact that the dots of $T_1$ and $T_2$ are located at the same levels in their $j-1$ first columns implies that $\pi_j^{T_1} = \pi_j^{T_2} =: \pi_j$. Let $i_1 \in [j,n]$ such that $d^1 = d_{i_1}^{T_1} \in C_j^{T_1}$, we consider the digital label $h \in [0,n-j]$ of $d^1$, $j'=j+h$, and $i'_1 = \pi_j(i_1) \in \left\{j',n+j'\right\}$. We denote by $d^2 = d_{i_2}^{T_2}$ the dot of $C_j^{T_2}$ that has the same pistol label as $d^1$. We intend to prove that $d^2 = d_i^{T_2}$, \textit{i.e.}, that $i'_1 = i'_2$ because $\pi_j$ is injective. Since $\epsilon_j^{T_1} = \epsilon_j^{T_2}$ and the column $C_j^{T_1}$ has a dot labeled with $\beta_0^e$ if and only if the column $C_j^{T_2}$ has a dot labeled with $\beta_0^e$ (in view of Lemma~\ref{lem:pointfixesietseulementsibeta0e}), then the type label of $d^1$ and $d^2$ is defined by the same rule among Rules II.1-a),II.1-b),II.2-a),II.2-b)i. and II.2-b)ii. of Algorithm \ref{algo:pistollabeling}. The equality $i'_1 = i'_2$ is then straightforward in each case in view of $d_{j',\min}^{T_1} = d_{j'}^{T_1}$ and $d_{j',\min}^{T_2} = d_{j'}^{T_2}$ following the condition (a) of Definition~\ref{defi:soustableaux}. So Hypothesis $\mathcal{H}(j)$ is true. By induction, Hypothesis $\mathcal{H}(n)$ is true, thence $T_1 = T_2$.
\end{proof}

\begin{cor}
\label{cor:Phicircvarphi}
The map $\Phi \circ \varphi_{|\tilde{T}_n}$ is the identity map of $\tilde{T}_n$. (In view of Lemma~\ref{lem:imagePhiinclus}, it implies that the image of $\Phi : \SP_n \rightarrow \Tab_n$ is exactly $\tilde{\Tab}_n$.)
\end{cor}

\begin{proof}
Let $T \in \tilde{T}_n$, $f = \varphi(T) \in \SP_n$ and $T'=\phi(f) \in \tilde{T}_n$. By Proposition~\ref{prop:varphicircphi}, we know that $\varphi(T') = f$, so $T = T'$ in view of Proposition~\ref{prop:auplusunelementdeTtilde}. 
\end{proof}

\section{Proof of Theorem~\ref{theo:bigeni}}

\label{sec:proof}

We now know that the injection $\phi : \SP_n \hookrightarrow \T_n$ induces a bijection from $\SP_n$ to $\tilde{\T}_n \subset \T_n$, whose inverse map is $\varphi_{|\tilde{\T}_n}$, and which maps the statistic $\overrightarrow{\ndf}$ to the statistic $\overrightarrow{\ngr}$ in view of Proposition~\ref{prop:egalitedesstats}. To finish the proof of Theorem~\ref{theo:bigeni}, it remains to show Formula (\ref{eq:lasommesurlesTdunmemeF}) for all $f \in \SP_n$, which we do in this section with the help of Algorithm \ref{algo:switch} and Algorithm \ref{algo:mute}, which compute $\varphi^{-1}(f)$.
\begin{defi}
\label{defi:ensemblesjumeaux}
Let $f \in \SP_n$ and $j \in [n]$. We define $\T_f(j)$ as the set of the tableaux $T \in \varphi^{-1}(f)$ such that $j \in \mathcal{C}(T)$. Let $T_0 \in \T_f(j)$. We define $\T(T_0,j)$ as the set of tableaux $T \in \varphi^{-1}(f)$ such that $\mathcal{C}(T) \cap [j~-~1] = \mathcal{C}(T_0) \cap [j-1]$ and $t_T(k) = t_{T_0}(k)$ for all $k \in \mathcal{C}(T) \cap [j-1]$ (this set is not empty because it contains $T_0$).
\end{defi}

\begin{lem}
\label{lem:motivationdefplusprecise}
Using the notations of Definition~\ref{defi:ensemblesjumeaux}, if $T \in \T(T_0,j)$, then the dots of $C_j^T$ have the same levels as the dots of $C_j^{T_0}$, and $j \in \mathcal{C}(T)$.
\end{lem}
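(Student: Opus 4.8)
The plan is to show that the two dots of $C_j^T$ are forced to be exactly the twin dots $d_i^T$ and $d_{n+i}^T$, where $d_i^{T_0}$ and $d_{n+i}^{T_0}$ are the twin dots filling $C_j^{T_0}$ (these exist because $j \in \mathcal{C}(T_0)$). First I would observe that the defining conditions of $\T(T_0,j)$, namely $\mathcal{C}(T) \cap [j-1] = \mathcal{C}(T_0) \cap [j-1]$ together with $t_T(k) = t_{T_0}(k)$ on this set, obstruct the conclusion of Lemma~\ref{lem:uniquefacondechangerdetypelabel}: were $\epsilon_j^T \neq \epsilon_j^{T_0}$, that lemma would produce some $k \in \mathcal{C}(T) \cap \mathcal{C}(T_0) \cap [j-1]$ with $t_T(k) \neq t_{T_0}(k)$, contradicting the hypothesis. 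Hence $\epsilon_j^T = \epsilon_j^{T_0}$.

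Next I would extract the arithmetic content of $j \in \mathcal{C}(T_0)$. Since $d_i^{T_0}$ and $d_{n+i}^{T_0}$ both sit in $C_j^{T_0}$ at rows $i$ and $n+i$ with $i \in [j,n]$, the $T_0$-paths from their boxes are stationary by Rule~(1) of Definition~\ref{defi:TPath}, so both dots share the common digital label $h_0 = i-j$; that is, $\epsilon_j^{T_0}$, and therefore $\epsilon_j^T$, is a pair of pistol labels of equal digital label $h_0$, with $j' = j+h_0 = i$. Applying the ``consequently'' part of Remark~\ref{rem:aproposdupistollabeling}(f) to $C_j^T$, whose two dots then necessarily have arrivals $i$ and $n+i$, I obtain that the twin dots $d_i^T$ and $d_{n+i}^T$ lie in the columns $C_1^T,\hdots,C_j^T$, i.e. in columns of index at most $j$.

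For the matching lower bound I would invoke Lemma~\ref{lem:ouestdmin}: the index of the column containing $d_{i,\min}$ is an invariant of $f = \varphi(T) = \varphi(T_0)$, equal to $\lceil k_{\min}/2 \rceil$ with $k_{\min} = \min\{k : f(k)=2i\}$. In $T_0$ both twins are in $C_j^{T_0}$, so $d_{i,\min}^{T_0} \in C_j^{T_0}$ and this invariant equals $j$; hence $d_{i,\min}^T \in C_j^T$ as well, which forces both $d_i^T$ and $d_{n+i}^T$ into columns of index at least $j$. Combining the two bounds yields $C_j^T = \{d_i^T, d_{n+i}^T\}$, so its dots occupy rows $i$ and $n+i$, exactly the levels of $C_j^{T_0}$. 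Finally, since $f(2i) \neq 2i$ (as $j \in \mathcal{C}(T_0)$), Lemma~\ref{lem:pointfixesietseulementsibeta0e} guarantees that no dot of $C_i^T$ carries the label $\beta_0^e$, so $C_j^T$ holds the twin pair $d_i^T, d_{n+i}^T$ with no $\beta_0^e$ in $C_i^T$, i.e. $j \in \mathcal{C}(T)$.

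The main obstacle is the risk that, even knowing $\epsilon_j^T = \epsilon_j^{T_0}$, the twin dots of $T$ could drift into columns strictly to the left of $j$ (which would shift the levels and might destroy $j \in \mathcal{C}(T)$), because the pistol labels alone do not a priori pin down the rows when $T$ and $T_0$ may differ in earlier columns. This is precisely what the $f$-invariance of the column of $d_{i,\min}$ from Lemma~\ref{lem:ouestdmin} rules out: it nails the leftmost twin to column $j$, closing the sandwich against the upper bound coming from Remark~\ref{rem:aproposdupistollabeling}(f). The payoff of this argument is that it avoids the delicate arrival-by-arrival matching needed in Proposition~\ref{prop:auplusunelementdeTtilde}, since both dots of $C_j^T$ carry the same digital label and their arrivals are automatically $\{i,n+i\}$.
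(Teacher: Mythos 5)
Your proof is correct, and at the decisive step it takes a genuinely more direct route than the paper. The paper never applies Lemma~\ref{lem:uniquefacondechangerdetypelabel} at the column $j$ itself: it argues by contradiction on the type label of $d_{j,\min}^T$ (the leftmost of the twin dots $d_j^T$ and $d_{n+j}^T$), locates that dot in a column $C_{j'}^T$ with $j'<j$ via Lemma~\ref{lem:ouestdmin}, applies Lemma~\ref{lem:uniquefacondechangerdetypelabel} at $j'$ (the ``$\epsilon_j$'' written there is evidently a typo for $\epsilon_{j'}$), and shows that a flipped type label would place $j'$ in $\mathcal{C}(T)\cap\mathcal{C}(T_0)\cap[j-1]$ with $t_T(j')\neq t_{T_0}(j')$, contradicting the definition of $\T(T_0,j)$; only after this detour does it recover the equality of digital labels in $C_j^T$ through Definition~\ref{defi:varphi}, using that the two dots of $C_j^T$ do not both have type label $\alpha$. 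You get that equality in one stroke: the defining conditions of $\T(T_0,j)$ rule out the conclusion of Lemma~\ref{lem:uniquefacondechangerdetypelabel} applied at column $j$, so $\epsilon_j^T=\epsilon_j^{T_0}$ outright and both dots of $C_j^T$ carry the digital label $i-j$. From there the two arguments coincide in substance: Remark~\ref{rem:aproposdupistollabeling}(f) forces the twins $d_i^T$ and $d_{n+i}^T$ into columns of index at most $j$, Lemma~\ref{lem:ouestdmin} (the $f$-invariance of the column of $d_{i,\min}$) forces index at least $j$, and Lemma~\ref{lem:pointfixesietseulementsibeta0e} transfers the absence of a $\beta_0^e$ label in $C_i$ from $T_0$ to $T$, giving $j\in\mathcal{C}(T)$. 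Your version is shorter, dispenses with the type-label case analysis entirely, and makes explicit the two-sided sandwich that the paper compresses into its closing appeal to Remark~\ref{rem:aproposdupistollabeling}(f); the only extra information the paper's longer route produces along the way (the type label of $d_{j,\min}^T$ is $\alpha$, hence the dots of $C_j^T$ have distinct type labels) is not needed for the statement itself, since the distinctness of type labels already follows from Part III.2- of Algorithm~\ref{algo:pistollabeling} once the digital labels are known to be equal.
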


\begin{proof}
Let $i \in [j,n]$ such that the dots of $C_j^{T_0}$ are the twin dots $d_i^{T_0}$ and $d_{n+i}^{T_0}$.
Since $j \in \mathcal{C}(T_0)$, we have $i > j$ (otherwise the dot $d_{n+i}^{T_0}$ would have the pistol label $\beta_0^e$). Consequently, the type labels of the twin dots of $C_j^{T_0}$ are both defined by Rule II.1-b) of Algorithm \ref{algo:pistollabeling}, so they are different. Let $j' \leq j$ such that $d_{j,\min}^{T_0} \in C_{j'}^{T_0}$. With precision, we have $j'<j$ : otherwise, we would have $d_{j,\min}^{T_0} = d_j^{T_0} \in C_j^{T_0}$ and $i$ would equal $j$, which is false. The type label of $d_{j,\min}^{T_0}$ is $\alpha$ following Rule II.1-b) of Algorithm \ref{algo:pistollabeling}. Now, if $T \in \T(T_0,j)$, suppose that the type label of $d_{j,\min}^T$ is $\beta$. By Lemma~\ref{lem:ouestdmin}, we know that $d_{j,\min}^T \in C_{j'}^T$. Also, by hypothesis and Lemma~\ref{lem:uniquefacondechangerdetypelabel}, it is necessary that $\epsilon_j^{T_0} = \epsilon_j^T$. Since $d_{j,\min}^{T_0}$ is labeled with $\alpha_{j-j'}$ and $d_{j,\min}^T$ with $\beta_{j-j'}$, it is necessary that $\epsilon_j^{T_0} = \epsilon_j^T = \left\{\alpha_{j-j'}^o,\beta_{j-j'}^e\right\}$. In particular, by Remark~\ref{rem:aproposdupistollabeling}(f), the dots of $C_{j'}^{T_0}$ are $d_{\left(\pi_{j'}^{T_0}\right)^{-1}(j)}^{T_0}$ and $d_{\left(\pi_{j'}^{T_0}\right)^{-1}(n+j)}^{T_0}$, and the dots of $C_{j'}^{T}$ are $d_{\left(\pi_{j'}^{T}\right)^{-1}(j)}^{T}$ and $d_{\left(\pi_{j'}^{T}\right)^{-1}(n+j)}^{T}$. Now, by Definition~\ref{defi:twindots} of $d_{j,\min}^{T_0} \in C_{j'}^{T_0}$ and $d_{j,\min}^{T} \in C_{j'}^{T}$, these two dots are $d_j^{T_0}$ and $d_{n+j}^{T_0}$ (respectively $d_j^{T}$ and $d_{n+j}^{T}$). In other words, since no dot of $C_j^{T_0}$ is labeled with $\beta_0^e$ (hence no dot of $C_j^{T}$ is labeled with $\beta_0^e$ in view of Lemma~\ref{lem:pointfixesietseulementsibeta0e}), we have $j' \in \mathcal{C}(T_0) \cap \mathcal{C}(T)$ and $t_{T_0}(j') = \alpha \neq \beta = t_T(j')$, which is absurd by hypothesis. So the type label of $d_{j,\min}^T$ is $\alpha$, hence the two dots of $C_j^T$ have different type labels. Afterwards, since $\varphi(T_0) = \varphi(T) = f$ and the type labels of the dots of $C_j^{T_0}$ and $C_j^T$ are not both $\alpha$, by Definition~\ref{defi:varphi} the digital label of the odd dot (respectively even dot) of $C_j^T$ is the same as the digital label of the odd dot (respectively even dot) of $C_j^{T_0}$. Since the dots of $C_j^{T_0}$ are twins, they have the same digital label $i-j$, and so do the dots of $C_j^T$. Consequently, by Remark~\ref{rem:aproposdupistollabeling}(f), the dots of $C_j^T$ are $d_{\left(\pi_j^T\right)^{-1}(i)}^T = d_i^T$ and $d_{\left(\pi_j^T\right)^{-1}(n+i)}^T = d_{n+i}^T$.

Finally, let $f = \varphi(T_0)$. By hypothesis $j \in \mathcal{C}(T_0)$, so $f(2j) > 2j$ in view of Lemma~\ref{lem:pointfixesietseulementsibeta0e}. Since $f = \varphi(T)$, then Lemma~\ref{lem:pointfixesietseulementsibeta0e} also implies that $j \in \mathcal{C}(T)$.
\end{proof}

\begin{defi}
\label{defi:sousensemblesjumeaux}
Using the notations of Definition~\ref{defi:ensemblesjumeaux}, by Lemma~\ref{lem:motivationdefplusprecise} we can decompose $\T(T_0,j)$ into the disjoint union $\T(T_0,j,\alpha) \sqcup \T(T_0,j,\beta)$ where, for all $\gamma \in \left\{\alpha,\beta\right\}$, the subset $\T(T_0,j,\gamma)$ is the set of the tableaux $T \in \T(T_0,j)$ such that $t_T(j) = \gamma$.  
\end{defi}

\subsection{An operation on $\mathcal{S}(T)$}

\begin{defi}
\label{defi:deuxpuissancem}
Let $T \in \T_n$ and $\left\{i_1,i_2,\hdots,i_m\right\}_{<} = \mathcal{S}(T)$. For all $k \in [m]$, we define $\mu_k^T$ as $1$ if $d_{i_k,\min}^T = d_{i_k}^T$, as $-1$ otherwise.
\end{defi}

\begin{algo}
\label{algo:switch}
Let $T \in \T_n$ and $\left\{i_1,i_2,\hdots,i_m\right\}_{<} = \mathcal{S}(T)$. We consider $\mu = (\mu_1,\mu_2,\hdots,\mu_m) \in \left\{-1,1\right\}^m$, and we define a tableau $S_{\mu}(T)$ as follows. Let $T'_1$ be the empty 1-tableau. For $j$ from $1$ to $n$, suppose that $T'_j$ is a $j$-tableau (which is true for $j=1$). In particular the map $\pi_j^{T'_j}$ is defined. Let $(r_1,r_2) \in [j,2n]^2$ such that $d_{r_1}^T$ and $d_{r_2}^T$ are the two dots of $C_j^T$. For all $p \in \left\{1,2\right\}$, we consider the integer $r'_p=\pi_j^T(r_p) \in [j,n] \sqcup [n+j,2n]$.

\begin{enumerate}[label =\arabic* -]

\item If $r'_p \in  \left\{i_k,n+i_k\right\}$ for some $k \in [m]$, let $(r_{\gamma},r_{\bar{\gamma}})$ be the pair $(i_k,n+i_k)$ if $\mu_k = 1$, or the pair $(n+i_k,i_k)$ if $\mu_k = -1$. We define the integer $r''_p$ as $\left(\pi_j^{T'_j}\right)^{-1}(r_{\gamma})$ if the type label of $d_{r_p}^T$ is the same as that of $d_{i_k,\min}^T$, as $\left(\pi_j^{T'_j}\right)^{-1}(r_{\bar{\gamma}})$ otherwise.

\item Otherwise, we define the integer $r''_p$ as $\left(\pi_j^{T'_j}\right)^{-1}(r'_p)$.

\end{enumerate}

Since $\pi_j^{T'_j}$ is bijective, the integers $r''_1$ and $r''_2$ are different, and by definition~the rows $R_{r''_1}^{T'_j}$ and $R_{r''_2}^{T'_j}$ are empty. We then define the $(j+1)$-tableau $T'_{j+1}$ by plotting two dots in the boxes $C_{j}^{T'_j} \cap R_{r''_1}^{T'_j}$ and $C_{j}^{T'_j} \cap R_{r''_2}^{T'_j}$. This algorithm produces a $(n+1)$-tableau $T'_n$ which we denote by $S_{\mu}(T)$, and which belongs to $\T_n$ as a $(n+1)$-tableau.

\end{algo}

For example, in Figure \ref{fig:undes4}, we consider a tableau $T \in \T_7$ such that $\mathcal{S}(T) = \left\{3,5\right\}$ and $(\mu_1^T,\mu_2^T) = (-1,1)$. In this figure, the tableau $T$ is depicted with its pistol labeling.

\begin{figure}[!htbp]
    \centering

\begin{tikzpicture}[scale=0.5]

\begin{scope}[xshift=1cm,yshift=-31cm]
\draw (0,0) grid[step=1] (7,14);
\draw (0,0) -- (7,7);
\draw (0,14) [dashed] -- (7,7);
\draw [black] (0.5,0.5) node[scale=1]{$\beta_0^e$};
\draw [black] (0.5,10.5) node[scale=1]{$\alpha_2^o$};
\draw [black] (1.5,1.5) node[scale=1]{$\alpha_0^o$};
\draw [black] (1.5,2.5) node[scale=1]{$\beta_1^e$};
\draw [black] (2.5,3.5) node[scale=1]{$\beta_1^e$};
\draw [black] (2.5,9.5) node[scale=1]{$\alpha_1^o$};
\draw [black] (3.5,5.5) node[scale=1]{$\alpha_2^e$};
\draw [black] (3.5,6.5) node[scale=1]{$\alpha_3^o$};
\draw [black] (4.5,4.5) node[scale=1]{$\alpha_0^o$};
\draw [black] (4.5,7.5) node[scale=1]{$\beta_1^e$};
\draw [black] (5.5,12.5) node[scale=1]{$\alpha_1^o$};
\draw [black] (5.5,13.5) node[scale=1]{$\beta_1^e$};
\draw [black] (6.5,11.5) node[scale=1]{$\beta_0^e$};
\draw [black] (6.5,8.5) node[scale=1]{$\alpha_0^o$};

\draw [fill=gray,opacity=0.25] (0,12) rectangle (1,13);
\draw [fill=gray,opacity=0.25] (1,11) rectangle (2,12);
\draw [fill=gray,opacity=0.25] (2,10) rectangle (3,11);
\draw [fill=gray,opacity=0.25] (3,9) rectangle (4,10);
\draw [fill=gray,opacity=0.25] (4,8) rectangle (5,9);
\draw [fill=gray,opacity=0.25] (5,7) rectangle (6,8);

\draw [fill=gray,opacity=0.25] (0,0) rectangle (1,1);
\draw [fill=gray,opacity=0.25] (1,1) rectangle (2,2);
\draw [fill=gray,opacity=0.25] (2,2) rectangle (3,3);
\draw [fill=gray,opacity=0.25] (3,3) rectangle (4,4);
\draw [fill=gray,opacity=0.25] (4,4) rectangle (5,5);
\draw [fill=gray,opacity=0.25] (5,5) rectangle (6,6);

\draw (-0.5,2.5) node[scale=0.8]{$3$};
\draw (-0.5,10.5) node[scale=0.8]{$10$};
\draw (7.5,5.5) node[scale=0.8]{$5$};
\draw (7.5,7.5) node[scale=0.8]{$12$};
\end{scope}

\end{tikzpicture}

 \caption{Tableau $T \in \T_7$ such that $\mathcal{S}(T) = \left\{3,5\right\}$.}
 \label{fig:undes4}

\end{figure}

In Figure \ref{fig:les4}, we represent the pistol-labeled versions of the tableaux $S_{\mu}(T)$ for all $\mu \in \left\{-1,1\right\}^2$.

\begin{figure}[!htbp]
    \centering

\begin{tikzpicture}[scale=0.5]

\draw (0,-15.5) -- (18,-15.5);
\draw (9,1) -- (9,-32);

\begin{scope}[xshift=1cm,yshift=-14cm]
\draw (0,0) grid[step=1] (7,14);
\draw (0,0) -- (7,7);
\draw (0,14) [dashed] -- (7,7);
\draw [black] (0.5,0.5) node[scale=1]{$\beta_0^e$};
\draw [black] (0.5,2.5) node[scale=1]{$\alpha_2^o$};
\draw [black] (1.5,1.5) node[scale=1]{$\alpha_0^o$};
\draw [black] (1.5,10.5) node[scale=1]{$\beta_1^e$};
\draw [black] (2.5,3.5) node[scale=1]{$\beta_1^e$};
\draw [black] (2.5,9.5) node[scale=1]{$\alpha_1^o$};
\draw [black] (3.5,5.5) node[scale=1]{$\alpha_2^e$};
\draw [black] (3.5,6.5) node[scale=1]{$\alpha_3^o$};
\draw [black] (4.5,4.5) node[scale=1]{$\alpha_0^o$};
\draw [black] (4.5,7.5) node[scale=1]{$\beta_1^e$};
\draw [black] (5.5,11.5) node[scale=1]{$\alpha_1^o$};
\draw [black] (5.5,13.5) node[scale=1]{$\beta_1^e$};
\draw [black] (6.5,8.5) node[scale=1]{$\beta_0^e$};
\draw [black] (6.5,12.5) node[scale=1]{$\alpha_0^o$};

\draw [fill=gray,opacity=0.25] (0,12) rectangle (1,13);
\draw [fill=gray,opacity=0.25] (1,11) rectangle (2,12);
\draw [fill=gray,opacity=0.25] (2,10) rectangle (3,11);
\draw [fill=gray,opacity=0.25] (3,9) rectangle (4,10);
\draw [fill=gray,opacity=0.25] (4,8) rectangle (5,9);
\draw [fill=gray,opacity=0.25] (5,7) rectangle (6,8);

\draw [fill=gray,opacity=0.25] (0,0) rectangle (1,1);
\draw [fill=gray,opacity=0.25] (1,1) rectangle (2,2);
\draw [fill=gray,opacity=0.25] (2,2) rectangle (3,3);
\draw [fill=gray,opacity=0.25] (3,3) rectangle (4,4);
\draw [fill=gray,opacity=0.25] (4,4) rectangle (5,5);
\draw [fill=gray,opacity=0.25] (5,5) rectangle (6,6);

\draw (3.5,-1) node[scale=1]{$S_{(1,1)}(T)$};

\draw (-0.5,2.5) node[scale=0.8]{$3$};
\draw (-0.5,10.5) node[scale=0.8]{$10$};
\draw (7.5,5.5) node[scale=0.8]{$5$};
\draw (7.5,7.5) node[scale=0.8]{$12$};
\end{scope}

\begin{scope}[xshift=10cm,yshift=-14cm]
\draw (0,0) grid[step=1] (7,14);
\draw (0,0) -- (7,7);
\draw (0,14) [dashed] -- (7,7);
\draw [black] (0.5,0.5) node[scale=1]{$\beta_0^e$};
\draw [black] (0.5,2.5) node[scale=1]{$\alpha_2^o$};
\draw [black] (1.5,1.5) node[scale=1]{$\alpha_0^o$};
\draw [black] (1.5,10.5) node[scale=1]{$\beta_1^e$};
\draw [black] (2.5,3.5) node[scale=1]{$\beta_1^e$};
\draw [black] (2.5,9.5) node[scale=1]{$\alpha_1^o$};
\draw [black] (3.5,7.5) node[scale=1]{$\alpha_2^e$};
\draw [black] (3.5,6.5) node[scale=1]{$\alpha_3^o$};
\draw [black] (4.5,4.5) node[scale=1]{$\alpha_0^o$};
\draw [black] (4.5,5.5) node[scale=1]{$\beta_1^e$};
\draw [black] (5.5,12.5) node[scale=1]{$\alpha_1^o$};
\draw [black] (5.5,13.5) node[scale=1]{$\beta_1^e$};
\draw [black] (6.5,11.5) node[scale=1]{$\beta_0^e$};
\draw [black] (6.5,8.5) node[scale=1]{$\alpha_0^o$};

\draw [fill=gray,opacity=0.25] (0,12) rectangle (1,13);
\draw [fill=gray,opacity=0.25] (1,11) rectangle (2,12);
\draw [fill=gray,opacity=0.25] (2,10) rectangle (3,11);
\draw [fill=gray,opacity=0.25] (3,9) rectangle (4,10);
\draw [fill=gray,opacity=0.25] (4,8) rectangle (5,9);
\draw [fill=gray,opacity=0.25] (5,7) rectangle (6,8);

\draw [fill=gray,opacity=0.25] (0,0) rectangle (1,1);
\draw [fill=gray,opacity=0.25] (1,1) rectangle (2,2);
\draw [fill=gray,opacity=0.25] (2,2) rectangle (3,3);
\draw [fill=gray,opacity=0.25] (3,3) rectangle (4,4);
\draw [fill=gray,opacity=0.25] (4,4) rectangle (5,5);
\draw [fill=gray,opacity=0.25] (5,5) rectangle (6,6);

\draw (3.5,-1) node[scale=1]{$S_{(1,-1)}(T)$};

\draw (-0.5,2.5) node[scale=0.8]{$3$};
\draw (-0.5,10.5) node[scale=0.8]{$10$};
\draw (7.5,5.5) node[scale=0.8]{$5$};
\draw (7.5,7.5) node[scale=0.8]{$12$};
\end{scope}

\begin{scope}[xshift=1cm,yshift=-31cm]
\draw (0,0) grid[step=1] (7,14);
\draw (0,0) -- (7,7);
\draw (0,14) [dashed] -- (7,7);
\draw [black] (0.5,0.5) node[scale=1]{$\beta_0^e$};
\draw [black] (0.5,10.5) node[scale=1]{$\alpha_2^o$};
\draw [black] (1.5,1.5) node[scale=1]{$\alpha_0^o$};
\draw [black] (1.5,2.5) node[scale=1]{$\beta_1^e$};
\draw [black] (2.5,3.5) node[scale=1]{$\beta_1^e$};
\draw [black] (2.5,9.5) node[scale=1]{$\alpha_1^o$};
\draw [black] (3.5,5.5) node[scale=1]{$\alpha_2^e$};
\draw [black] (3.5,6.5) node[scale=1]{$\alpha_3^o$};
\draw [black] (4.5,4.5) node[scale=1]{$\alpha_0^o$};
\draw [black] (4.5,7.5) node[scale=1]{$\beta_1^e$};
\draw [black] (5.5,12.5) node[scale=1]{$\alpha_1^o$};
\draw [black] (5.5,13.5) node[scale=1]{$\beta_1^e$};
\draw [black] (6.5,11.5) node[scale=1]{$\beta_0^e$};
\draw [black] (6.5,8.5) node[scale=1]{$\alpha_0^o$};

\draw [fill=gray,opacity=0.25] (0,12) rectangle (1,13);
\draw [fill=gray,opacity=0.25] (1,11) rectangle (2,12);
\draw [fill=gray,opacity=0.25] (2,10) rectangle (3,11);
\draw [fill=gray,opacity=0.25] (3,9) rectangle (4,10);
\draw [fill=gray,opacity=0.25] (4,8) rectangle (5,9);
\draw [fill=gray,opacity=0.25] (5,7) rectangle (6,8);

\draw [fill=gray,opacity=0.25] (0,0) rectangle (1,1);
\draw [fill=gray,opacity=0.25] (1,1) rectangle (2,2);
\draw [fill=gray,opacity=0.25] (2,2) rectangle (3,3);
\draw [fill=gray,opacity=0.25] (3,3) rectangle (4,4);
\draw [fill=gray,opacity=0.25] (4,4) rectangle (5,5);
\draw [fill=gray,opacity=0.25] (5,5) rectangle (6,6);

\draw (3.5,-1) node[scale=1]{$S_{(-1,1)}(T)$};

\draw (-0.5,2.5) node[scale=0.8]{$3$};
\draw (-0.5,10.5) node[scale=0.8]{$10$};
\draw (7.5,5.5) node[scale=0.8]{$5$};
\draw (7.5,7.5) node[scale=0.8]{$12$};
\end{scope}

\begin{scope}[xshift=10cm,yshift=-31cm]
\draw (0,0) grid[step=1] (7,14);
\draw (0,0) -- (7,7);
\draw (0,14) [dashed] -- (7,7);
\draw [black] (0.5,0.5) node[scale=1]{$\beta_0^e$};
\draw [black] (0.5,10.5) node[scale=1]{$\alpha_2^o$};
\draw [black] (1.5,1.5) node[scale=1]{$\alpha_0^o$};
\draw [black] (1.5,2.5) node[scale=1]{$\beta_1^e$};
\draw [black] (2.5,3.5) node[scale=1]{$\beta_1^e$};
\draw [black] (2.5,9.5) node[scale=1]{$\alpha_1^o$};
\draw [black] (3.5,7.5) node[scale=1]{$\alpha_2^e$};
\draw [black] (3.5,6.5) node[scale=1]{$\alpha_3^o$};
\draw [black] (4.5,4.5) node[scale=1]{$\alpha_0^o$};
\draw [black] (4.5,5.5) node[scale=1]{$\beta_1^e$};
\draw [black] (5.5,11.5) node[scale=1]{$\alpha_1^o$};
\draw [black] (5.5,13.5) node[scale=1]{$\beta_1^e$};
\draw [black] (6.5,8.5) node[scale=1]{$\beta_0^e$};
\draw [black] (6.5,12.5) node[scale=1]{$\alpha_0^o$};

\draw [fill=gray,opacity=0.25] (0,12) rectangle (1,13);
\draw [fill=gray,opacity=0.25] (1,11) rectangle (2,12);
\draw [fill=gray,opacity=0.25] (2,10) rectangle (3,11);
\draw [fill=gray,opacity=0.25] (3,9) rectangle (4,10);
\draw [fill=gray,opacity=0.25] (4,8) rectangle (5,9);
\draw [fill=gray,opacity=0.25] (5,7) rectangle (6,8);

\draw [fill=gray,opacity=0.25] (0,0) rectangle (1,1);
\draw [fill=gray,opacity=0.25] (1,1) rectangle (2,2);
\draw [fill=gray,opacity=0.25] (2,2) rectangle (3,3);
\draw [fill=gray,opacity=0.25] (3,3) rectangle (4,4);
\draw [fill=gray,opacity=0.25] (4,4) rectangle (5,5);
\draw [fill=gray,opacity=0.25] (5,5) rectangle (6,6);

\draw (3.5,-1) node[scale=1]{$S_{(-1,-1)}(T)$};

\draw (-0.5,2.5) node[scale=0.8]{$3$};
\draw (-0.5,10.5) node[scale=0.8]{$10$};
\draw (7.5,5.5) node[scale=0.8]{$5$};
\draw (7.5,7.5) node[scale=0.8]{$12$};
\end{scope}

\end{tikzpicture}

 \caption{The tableaux $S_{\mu}(T)$ for all $\mu \in \left\{-1,1\right\}^2$.}
 \label{fig:les4}

\end{figure}

Note that if $\mu_0 = (\mu_1^T,\mu_2^T)$ ($=(-1,1)$), then $S_{\mu_0}(T) = T$ (in the bottom left-hand corner in Figure \ref{fig:les4}). Afterwards, for all $\mu \in \left\{-1,1\right\}^2$ and $j \in [7]$, we have $\epsilon_j^{S_{\mu}(T)} = \epsilon_j^T$, consequently $\varphi(S_{\mu}(T)) = \varphi(T)$ ; also, we have $\mathcal{C}(S_{\mu}(T)) = \mathcal{C}(T) = \left\{3\right\}$, and $t_{S_{\mu}(T)}(3) = \beta = t_T(3)$. All these remarks are generalized in the easy following result.

\begin{prop}
\label{prop:existenceetuniciteduswitch}
Let $T \in \T_n$, $f = \varphi(T) \in \SP_n$ and $\left\{i_1,i_2,\hdots,i_m\right\}_{<} = \mathcal{S}(T)$. For all $\mu = (\mu_1,\mu_2,\hdots,\mu_m) \in \left\{-1,1\right\}^m$, the tableau $S_{\mu}(T)$ is the unique tableau $T' \in \varphi^{-1}(f)$ such that :
\begin{itemize}
\item $\mathcal{S}(T') = \mathcal{S}(T)$ and for all $k \in [m]$, we have $\mu_k^{T'} = \mu_k$;
\item $\mathcal{C}(T') = \mathcal{C}(T)$ and for all $j \in \mathcal{C}(T)$, we have $t_T(j) = t_{t'}(j)$.
\end{itemize}
\end{prop}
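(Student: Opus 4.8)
The plan is to treat existence and uniqueness separately, the hinge of both being the single claim that the switch of Algorithm~\ref{algo:switch} preserves the pistol labels column by column, i.e. $\epsilon_j^{S_\mu(T)} = \epsilon_j^T$ for every $j \in [n]$ (notation of Definition~\ref{defi:epsilon}). Granting this, Definition~\ref{defi:varphi} reads $\varphi$ off exactly these labels, so that $\varphi(S_\mu(T)) = \varphi(T) = f$ and $S_\mu(T) \in \varphi^{-1}(f)$; the remainder of the first bullet and the whole second bullet then follow with little extra effort.

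First I would prove the preservation claim by induction on $j$ in the order Algorithm~\ref{algo:switch} processes the columns. The digital labels are immediate: in Case~1 the original arrival $r'_p$ and the switched arrival (whichever of $r_\gamma, r_{\bar{\gamma}}$ is chosen) all lie in $\{i_k, n+i_k\}$, and Part~I of Algorithm~\ref{algo:pistollabeling} turns each of these into the same digital label $i_k - j$; in Case~2 the arrival is literally unchanged. The delicate point is the type labels, because Rule~II of Algorithm~\ref{algo:pistollabeling} is applied in the \emph{opposite} column order (from $C_n$ down to $C_1$) and consults the already-labeled column $C_{j'}^{T'}$ with $j' = j+h$ together with the predicate ``is this dot $d_{j',\min}$?''. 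Here the design of Case~1 is exactly what is needed: selecting $r_\gamma$ (arrival $i_k$ precisely when $\mu_k = 1$) in the case where the type label of $d_{r_p}^T$ equals that of $d_{i_k,\min}^T$ keeps the min/non-min relationship, hence the type label, in agreement with $T$. I expect the bulk of the work to be a downward induction on $j$ that assumes $\epsilon_{j'}^{S_\mu(T)} = \epsilon_{j'}^T$ for all $j' > j$ and then runs through the cases II.1-a), II.1-b), II.2-a), II.2-b) of Algorithm~\ref{algo:pistollabeling}; reconciling the two opposite column orders is the main obstacle of the whole proof.

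Once the pistol labels are known to be preserved, the two bullet points are short. Because $\mathcal{C}(T)$ and the labels $t_T$ are defined only through pistol labels and through the event ``$C_j$ contains twin dots'', and the twins moved by $S_\mu$ are precisely those of $\mathcal{S}(T)$ (which by definition are \emph{not} in a common column, so the $\mathcal{C}$-columns are rebuilt verbatim under Case~2), we get $\mathcal{C}(S_\mu(T)) = \mathcal{C}(T)$ and $t_{S_\mu(T)} = t_T$ on $\mathcal{C}(T)$. For the first bullet, Case~1 places the two twins of each $i_k$ so that $d_{i_k,\min}^{S_\mu(T)} = d_{i_k}^{S_\mu(T)}$ exactly when $\mu_k = 1$, i.e. $\mu_k^{S_\mu(T)} = \mu_k$; moreover the three defining conditions of $\mathcal{S}$ survive, the only nontrivial one being non-freeness of $d_{n+i_k}$ --- here one notes that the lower twin sits in a column $\le i_k$ by the tableau condition and the upper twin sits in a column $\le i_k$ because it is non-free in $T$, so after the switch $d_{n+i_k}^{S_\mu(T)}$ still occupies a column $\le i_k$ and stays non-free. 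Hence $\mathcal{S}(S_\mu(T)) = \mathcal{S}(T)$. As a sanity check, $\mu = (\mu_1^T, \ldots, \mu_m^T)$ moves no dot and gives $S_\mu(T) = T$.

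For uniqueness, let $T' \in \varphi^{-1}(f)$ carry the same data $(\mathcal{S}, \mu, \mathcal{C}, t|_{\mathcal{C}})$ as $S_\mu(T)$. Applying Lemma~\ref{lem:uniquefacondechangerdetypelabel} to the pair $(T', S_\mu(T))$, the equalities $\mathcal{C}(T') = \mathcal{C}(S_\mu(T))$ and $t_{T'} = t_{S_\mu(T)}$ on this set leave no room for the conclusion of that lemma, so by contraposition $\epsilon_j^{T'} = \epsilon_j^{S_\mu(T)}$ for all $j$. With all pistol labels agreeing, I would finish by an induction on $j$ parallel to the proof of Proposition~\ref{prop:auplusunelementdeTtilde}: once the first $j-1$ columns coincide we have $\pi_j^{T'} = \pi_j^{S_\mu(T)}$, and for each dot of $C_j$ the relevant case of Rule~II of Algorithm~\ref{algo:pistollabeling}, fed with the position of $d_{j',\min}$ now dictated by $(\mathcal{S}, \mu)$ and by $t|_{\mathcal{C}}$, pins down its arrival and hence, by bijectivity of $\pi_j$, its row. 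Therefore $T' = S_\mu(T)$.
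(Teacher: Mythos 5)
Your proposal cannot really be compared step-by-step with the paper's proof, because the paper does not prove this proposition at all: it is stated immediately after the example of Figures~\ref{fig:undes4} and~\ref{fig:les4} with the sentence ``All these remarks are generalized in the easy following result'', so the paper's entire justification is the observation, checked on that example, that $\epsilon_j^{S_{\mu}(T)} = \epsilon_j^{T}$ for all $j$ (whence $\varphi(S_{\mu}(T)) = f$), together with the preservation of $\mathcal{C}$ and $t$ and the prescribed values of $\mu$. Your existence half makes precisely this $\epsilon$-preservation the hinge and sketches the induction that the paper omits, so there you agree with (and are more explicit than) the paper. Your uniqueness half is a genuine addition: the paper never argues uniqueness, and your route --- contraposing Lemma~\ref{lem:uniquefacondechangerdetypelabel} to force $\epsilon_j^{T'} = \epsilon_j^{S_{\mu}(T)}$ for every $j$, then a column-by-column induction modeled on Proposition~\ref{prop:auplusunelementdeTtilde} --- is exactly the mechanism the paper itself uses to prove injectivity of $\varphi$ on $\tilde{\T}_n$, so it is the natural and correct tool here. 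Two details would need care in a full write-up. First, the type-label half of the $\epsilon$-preservation induction, which you explicitly defer, really is the crux (the switch builds columns left to right while Algorithm~\ref{algo:pistollabeling} labels them right to left), so the hardest verification is left open rather than wrong. Second, in the uniqueness induction the position of $d_{j',\min}$ is \emph{not} always dictated by $(\mathcal{S},\mu)$ and $t|_{\mathcal{C}}$: the case where $d_{n+j'}$ is free falls under neither, and there one needs the observation that freeness of $d_{n+j'}$ means exactly that its column exceeds $j'$, which forces $d_{j',\min} = d_{j'}$, plus the fact that this freeness transfers from $S_{\mu}(T)$ to $T'$ (via the assumed equality of the $\mathcal{S}$-data, or via Lemma~\ref{cor:sifreechezlunetpaschezlautrealorschangementlabel}); Lemma~\ref{lem:ouestdmin} pins down the column of $d_{j',\min}$ from $f$ alone, but not which of the two twins occupies it. Neither point breaks your plan; they are simply the places where the ``easy'' result actually requires work.
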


\subsection{An operation on $\mathcal{C}(T)$}

\begin{algo}
\label{algo:mute}
Let $T \in \varphi^{-1}(f)$ for some given $f \in \SP_n$, $j_0 \in \mathcal{C}(T)$ and $\gamma \in \left\{\alpha,\beta\right\}$. We define a tableau $M_{j_0,\gamma}(T)$ as follows. First of all, let $\left\{i_1,i_2,\hdots,i_m\right\}_< = \mathcal{S}(T)$, we define $\tilde{T} \in \varphi^{-1}(f)$ as $S_{\mu}(T)$ where $\mu$ is the sequence $(1,1,\hdots,1) \in \left\{-1,1\right\}^m$. Afterward, we define $T^{j_0+1} \in \mathfrak{T}_n^{j_0+1}$ as follows. Let $(c,\bar{c})$ be defined as $(a,b)$ if $\gamma = \alpha$, as $(b,a)$ otherwise.
\begin{itemize}

\item For all $j < j_0$ and $i \in [j,2n]$, if the column $C_j^{\tilde{T}}$ contains the dot $d_i^{\tilde{T}}$ whose type label is $\alpha$ (respectively $\beta$), then the column $C_j^{T^{j_0+1}}$ contains the dot $d_i^{T^{j_0+1}}$ labeled with the letter $a$ (respectively $b$).

\item If $d_i^T$ and $d_{n+i}^T$ are the twin dots of $C_{j_0}^{\tilde{T}}$, then the column $C_{j_0}^{T^{j_0+1}}$ contains the twin dots $d_i^{T^{j_0+1}}$ and $d_{n+i}^{T^{j_0+1}}$ labeled with the letters $c$ and $\bar{c}$ respectively.

\end{itemize}

Afterwards, we define $M_{j_0,\gamma}(T) \in \T_n$ as the tableau produced by the restriction of Algorithm \ref{algo:insertionlabeling} from step $j_0+1$ (using $T^{j_0+1}$) to step $n$.
\end{algo}

\begin{rem}
\label{rem:touslesmusontenvoyesurlememe}
With the notations of Algorithm \ref{algo:mute}, for all $\mu \in \left\{-1,1\right\}^m$, we have the equality $M_{j_0,\gamma}(T) = M_{j_0,\gamma}(S_{\mu}(T))$.
\end{rem}

For example, consider the tableau $T \in \T_7$ of Figure \ref{fig:undes4}, with $\mathcal{S}(T) = \left\{3,5\right\}$ and $\mathcal{C}(T) = \left\{3\right\}$. To compute $M_{3,\alpha}(T)$ and $M_{3,\beta}(T)$, we first need to make $f = \varphi(T)$ explicit, which can be read from the pistol labeling of $T$ in Figure \ref{fig:undes4} (or from any pistol labeling of the tableaux depicted in Figure \ref{fig:les4} for that matter) : $$f = (6,\textbf{2},4,\textbf{6},8,\textbf{8},14,\textbf{12},10,\textbf{12},14,\textbf{14},14,\textbf{14}) \in \SP_7,$$
whose graphical representation is depicted in Figure \ref{fig:f}.

\begin{figure}[!htbp]

\begin{center}

\begin{tikzpicture}[scale=0.4]

\draw (0,0) grid[step=1] (2,7);
\draw (2,1) grid[step=1] (4,7);
\draw (4,2) grid[step=1] (6,7);
\draw (6,3) grid[step=1] (8,7);
\draw (8,4) grid[step=1] (10,7);
\draw (10,5) grid[step=1] (12,7);
\draw (12,6) grid[step=1] (14,7);

\fill (0.5,2.5) circle (0.2);
\draw (1.5,0.5) node[scale=0.8]{$\times$};
\fill (2.5,1.5) circle (0.2);
\draw (3.5,2.5) node[scale=0.8]{$\times$};
\fill (4.5,3.5) circle (0.2);
\fill (5.5,3.5) node[scale=0.8]{$\times$};
\fill (6.5,6.5) circle (0.2);
\draw (7.5,5.5) node[scale=0.8]{$\times$};
\fill (8.5,4.5) circle (0.2);
\draw (9.5,5.5) node[scale=0.8]{$\times$};
\fill (10.5,6.5) circle (0.2);
\draw (11.5,6.5) node[scale=0.8]{$\times$};
\fill (12.5,6.5) circle (0.2);
\draw (13.5,6.5) node[scale=0.8]{$\times$};

\draw (-0.5,0.5) node[scale=0.8]{$2$};
\draw (-0.5,1.5) node[scale=0.8]{$4$};
\draw (-0.5,2.5) node[scale=0.8]{$6$};
\draw (-0.5,3.5) node[scale=0.8]{$8$};
\draw (-0.5,4.5) node[scale=0.8]{$10$};
\draw (-0.5,5.5) node[scale=0.8]{$12$};
\draw (-0.5,6.5) node[scale=0.8]{$14$};

\draw (0.5,7.5) node[scale=0.8]{$1$};
\draw (1.5,7.5) node[scale=0.8]{$2$};
\draw (2.5,7.5) node[scale=0.8]{$3$};
\draw (3.5,7.5) node[scale=0.8]{$4$};
\draw (4.5,7.5) node[scale=0.8]{$5$};
\draw (5.5,7.5) node[scale=0.8]{$6$};
\draw (6.5,7.5) node[scale=0.8]{$7$};
\draw (7.5,7.5) node[scale=0.8]{$8$};
\draw (8.5,7.5) node[scale=0.8]{$9$};
\draw (9.5,7.5) node[scale=0.8]{$10$};
\draw (10.5,7.5) node[scale=0.8]{$11$};
\draw (11.5,7.5) node[scale=0.8]{$12$};
\draw (12.5,7.5) node[scale=0.8]{$13$};
\draw (13.5,7.5) node[scale=0.8]{$14$};

\end{tikzpicture}

\end{center}
\caption{The surjective pistol $f = \varphi(T) \in \SP_7$.}
\label{fig:f}

\end{figure}

 Following the notations of Algorithm \ref{algo:mute}, we have $\tilde{T} = S_{(1,1)}(T) \in \T_7$, which is represented at the top left-hand corner of Figure \ref{fig:les4}. We then use $\tilde{T}$ to compute the insertion labeled versions of $M_{3,\alpha}(T)$ and $M_{3,\beta}(T)$ in Figure \ref{fig:les2}.

\begin{figure}[!htbp]
    \centering

\begin{tikzpicture}[scale=0.4]

\begin{scope}[xshift=1cm,yshift=-14cm]
\draw (0,0) grid[step=1] (7,14);
\draw (0,0) -- (7,7);
\draw (0,14) [dashed] -- (7,7);
\draw (0.5,0.5) node[scale=1]{$b$};
\draw (0.5,2.5) node[scale=1]{$a$};
\draw (1.5,1.5) node[scale=1]{$a$};
\draw (1.5,10.5) node[scale=1]{$b$};
\draw (2.5,3.5) node[scale=1]{$a$};
\draw (2.5,9.5) node[scale=1]{$b$};
\draw (3.5,5.5) node[scale=1]{$b$};
\draw (3.5,6.5) node[scale=1]{$a$};
\draw (4.5,4.5) node[scale=1]{$a$};
\draw (4.5,7.5) node[scale=1]{$b$};
\draw (5.5,11.5) node[scale=1]{$a$};
\draw (5.5,12.5) node[scale=1]{$a$};
\draw (6.5,8.5) node[scale=1]{$a$};
\draw (6.5,13.5) node[scale=1]{$b$};

\draw [fill=gray,opacity=0.25] (0,12) rectangle (1,13);
\draw [fill=gray,opacity=0.25] (1,11) rectangle (2,12);
\draw [fill=gray,opacity=0.25] (2,10) rectangle (3,11);
\draw [fill=gray,opacity=0.25] (3,9) rectangle (4,10);
\draw [fill=gray,opacity=0.25] (4,8) rectangle (5,9);
\draw [fill=gray,opacity=0.25] (5,7) rectangle (6,8);

\draw [fill=gray,opacity=0.25] (0,0) rectangle (1,1);
\draw [fill=gray,opacity=0.25] (1,1) rectangle (2,2);
\draw [fill=gray,opacity=0.25] (2,2) rectangle (3,3);
\draw [fill=gray,opacity=0.25] (3,3) rectangle (4,4);
\draw [fill=gray,opacity=0.25] (4,4) rectangle (5,5);
\draw [fill=gray,opacity=0.25] (5,5) rectangle (6,6);

\draw (3.5,-1) node[scale=1]{$M_{3,\alpha}(T)$};
\end{scope}

\begin{scope}[xshift=9cm,yshift=-14cm]
\draw (0,0) grid[step=1] (7,14);
\draw (0,0) -- (7,7);
\draw (0,14) [dashed] -- (7,7);
\draw (0.5,0.5) node[scale=1]{$b$};
\draw (0.5,2.5) node[scale=1]{$a$};
\draw (1.5,1.5) node[scale=1]{$a$};
\draw (1.5,10.5) node[scale=1]{$b$};
\draw (2.5,3.5) node[scale=1]{$b$};
\draw (2.5,9.5) node[scale=1]{$a$};
\draw (3.5,5.5) node[scale=1]{$a$};
\draw (3.5,6.5) node[scale=1]{$a$};
\draw (4.5,4.5) node[scale=1]{$a$};
\draw (4.5,7.5) node[scale=1]{$b$};
\draw (5.5,11.5) node[scale=1]{$a$};
\draw (5.5,13.5) node[scale=1]{$b$};
\draw (6.5,8.5) node[scale=1]{$b$};
\draw (6.5,12.5) node[scale=1]{$a$};

\draw [fill=gray,opacity=0.25] (0,12) rectangle (1,13);
\draw [fill=gray,opacity=0.25] (1,11) rectangle (2,12);
\draw [fill=gray,opacity=0.25] (2,10) rectangle (3,11);
\draw [fill=gray,opacity=0.25] (3,9) rectangle (4,10);
\draw [fill=gray,opacity=0.25] (4,8) rectangle (5,9);
\draw [fill=gray,opacity=0.25] (5,7) rectangle (6,8);

\draw [fill=gray,opacity=0.25] (0,0) rectangle (1,1);
\draw [fill=gray,opacity=0.25] (1,1) rectangle (2,2);
\draw [fill=gray,opacity=0.25] (2,2) rectangle (3,3);
\draw [fill=gray,opacity=0.25] (3,3) rectangle (4,4);
\draw [fill=gray,opacity=0.25] (4,4) rectangle (5,5);
\draw [fill=gray,opacity=0.25] (5,5) rectangle (6,6);

\draw (3.5,-1) node[scale=1]{$M_{3,\beta}(T)$};
\end{scope}

\end{tikzpicture}

 \caption{The insertion-labeled versions of the tableaux $M_{3,\alpha}(T)$ and $M_{3,\beta}(T)$.}
 \label{fig:les2}

\end{figure}

\begin{lem}
\label{lem:sij1pluspetitquej2alorsaetbBIS}
With the notations of Algorithm \ref{algo:mute}, let $T' = M_{j_0,\gamma}(T)$ and $i \in [n]$. If $d_{i,\min}^{T'} = d_{n+i}^{T'}$, then $f(2i) = 2i$ and the two dots of $C_i^{T'}$ have different insertion labels.
\end{lem}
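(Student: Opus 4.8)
The plan is to follow, almost verbatim, the proof of Lemma~\ref{lem:sij1pluspetitquej2alorsaetb}, since the present statement is its exact analogue with $\Phi(f)$ replaced by $T' = M_{j_0,\gamma}(T)$. The structural difference is that $T'$ is produced by running the restriction of Algorithm~\ref{algo:insertionlabeling} from step $j_0+1$ onward, starting from the seed tableau $T^{j_0+1}$ whose columns $C_1^{T'},\dots,C_{j_0-1}^{T'}$ coincide with those of $\tilde T = S_{(1,\dots,1)}(T)$ and whose column $C_{j_0}^{T'}$ carries the twin dots of $C_{j_0}^{\tilde T}$ relabelled according to $\gamma$. I would prove the two assertions separately, letting $j'$ denote the column of $d_{i,\min}^{T'}$ (so that $j' \le i$) for the first, and splitting on the position of $i$ relative to $j_0$ for the second.

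For the assertion $f(2i) = 2i$ I would distinguish $j' > j_0$ from $j' \le j_0$. If $j' > j_0$ (hence $i \ge j' > j_0$ as well), the dot $d_{i,\min}^{T'}$ is plotted by the restricted insertion, so by its definition it is placed by Rule~1.(a) or Rule~2.(a) of Definition~\ref{defi:deltainsertions}; being equal to $d_{n+i}^{T'}$ it occupies an upper row, so it must have been placed by Rule~2.(a)i., which forces $f(2i) = 2i$ exactly as in Lemma~\ref{lem:sij1pluspetitquej2alorsaetb}. If $j' \le j_0$, then $j' = j_0$ is impossible (the two dots of $C_{j_0}^{T'}$ form a twin pair, forcing $d_{i,\min}^{T'} = d_i^{T'}$), so $j' < j_0$; here $d_{i,\min}^{T'}$ is a seed dot and, comparing the columns $< j_0$ common to $T'$ and $\tilde T$, one gets $d_{i,\min}^{\tilde T} = d_{n+i}^{\tilde T}$. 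Since $\tilde T = S_{(1,\dots,1)}(T)$ has $\mathcal{S}(\tilde T) = \mathcal{S}(T)$ and all $\mu_k = 1$ by Proposition~\ref{prop:existenceetuniciteduswitch}, it satisfies condition (a) of Definition~\ref{defi:soustableaux}, so $d_{i,\min}^{\tilde T} = d_{n+i}^{\tilde T}$ forces $i \notin \mathcal{S}(\tilde T)$; together with the twins lying in distinct columns this leaves only the possibility that a dot of $C_i^{\tilde T}$ carries $\beta_0^e$, whence $f(2i) = 2i$ by Lemma~\ref{lem:pointfixesietseulementsibeta0e}.

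For the assertion that the two dots of $C_i^{T'}$ have different insertion labels I would split on $i$. If $i > j_0$, the column $C_i^{T'}$ is filled at step $i$ of the restricted insertion, and using $f(2i) = 2i$ I repeat the argument of Lemma~\ref{lem:sij1pluspetitquej2alorsaetb}: if $R_i$ is empty at the start of step $i$ the labels are set to $(a,b)$ by Rule~I.1-, while otherwise the dot already in $R_i$ is the lower twin $d_i^{T'}$, which must carry the label $a$ (a $b$-labelled dot would have been sent to $R_{n+i}$ by Rule~2.(a)i.\ since $f(2i)=2i$), so Rule~I.2-a) again yields different labels. If $i = j_0$, the two dots of $C_{j_0}^{T'}$ are the twin pair carrying $c$ and $\bar c$, hence differ. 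If $i < j_0$, then $C_i^{T'} = C_i^{\tilde T}$ carries a $\beta_0^e$ by the seed analysis above, so by Remark~\ref{rem:aproposdupistollabeling}(e) its two dots have type labels $\alpha$ and $\beta$, i.e.\ insertion labels $a$ and $b$.

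The main obstacle is the seed reduction of the second paragraph, namely the step establishing that $d_{i,\min}^{\tilde T} = d_{n+i}^{\tilde T}$ forces a $\beta_0^e$ in $C_i^{\tilde T}$; equivalently, one must rule out that $i \notin \mathcal{S}(\tilde T)$ merely because $d_{n+i}^{\tilde T}$ is free while $C_i^{\tilde T}$ carries no $\beta_0^e$. I expect this to require the interplay of Lemma~\ref{lem:ouestdmin}, which locates $d_{i,\min}$ in the column $\lceil k_{\min}/2\rceil$ with $k_{\min} = \min\{k : f(k) = 2i\}$, with the forcing of parity and type labels in Parts~II and III of Algorithm~\ref{algo:pistollabeling}: whenever the upper twin is the leftmost one, the column $C_i$ is compelled to receive both of its dots at digital label $0$ (value $2i$), so that its even dot is necessarily $\beta_0^e$. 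This delicate label computation is what makes the seed case go through, and it mirrors the opening paragraph of the proof of Lemma~\ref{lem:insertionlabelsequalpistollabels}.
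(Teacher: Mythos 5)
Your proposal reproduces the paper's proof almost step for step: the same split on the column $j'$ of $d_{i,\min}^{T'}$ relative to $j_0$ for the fixed-point claim (Rule 2.(a)i.\ of Definition~\ref{defi:deltainsertions} when $j'>j_0$, impossibility of $j'=j_0$, reduction to $\tilde T$ and $i\notin\mathcal{S}(\tilde T)$ when $j'<j_0$), and the same split on $i$ relative to $j_0$ for the label claim; your direct handling of $i=j_0$ via the seed labels $c,\bar c$ is a harmless variant of the paper's observation that this case cannot occur. However, your proof is incomplete exactly at the step you yourself call the main obstacle, and the mechanism you sketch to close it would fail.

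Specifically, the claim that ``whenever the upper twin is the leftmost one, the column $C_i$ is compelled to receive both of its dots at digital label $0$'' is false. The paper's own running example refutes it: in the tableau $T_1$ of Figure~\ref{fig:T0grounded} (which satisfies condition (a) of Definition~\ref{defi:soustableaux}, being $\Phi(f_1)$, so it is a legitimate instance), one has $d_{3,\min}^{T_1}=d_{10}^{T_1}=d_{n+3}^{T_1}$ since $d_{10}^{T_1}\in C_1^{T_1}$ while $d_3^{T_1}\in C_3^{T_1}$, and yet the pistol labels of $C_3^{T_1}$ are $\beta_0^e$ and $\alpha_3^o$ --- the odd dot has digital label $3$, not $0$. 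An argument ``mirroring the opening paragraph of Lemma~\ref{lem:insertionlabelsequalpistollabels}'' is also unavailable, because that paragraph runs on insertion labels, and $\tilde T$ need not lie in the image of $\Phi$ (it need not satisfy condition (b) of Definition~\ref{defi:soustableaux}). The step actually has a purely positional, label-free proof, which is what the paper's phrase ``since $d_{n+i}^{\tilde T}$ is not free'' is appealing to: $d_{i,\min}^{\tilde T}=d_{n+i}^{\tilde T}$ means, by Definition~\ref{defi:twindots}, that the column of $d_{n+i}^{\tilde T}$ is strictly smaller than that of $d_i^{\tilde T}$, which is at most $i$; and since $R_{n+i}$ is the $(2n-i)$-th row of the tableau counted from the bottom (for $i<n$), the dot $d_{n+i}^{\tilde T}$ is free if and only if its column index is at least $i+1$. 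A column $<i$ therefore makes it non-free; combined with the twins lying in distinct columns and $i\notin\mathcal{S}(\tilde T)$, the only remaining possibility is a dot of $C_i^{\tilde T}$ labeled $\beta_0^e$, whence $f(2i)=2i$ by Lemma~\ref{lem:pointfixesietseulementsibeta0e}. (The exceptional case $i=n$, where $d_{2n}$ is always free, is harmless: $C_n^{\tilde T}$ always carries a $\beta_0^e$ dot by Remark~\ref{rem:aproposdupistollabeling}(d), and $f(2n)=2n$ trivially.) With this substitution in place of your sketched label computation, your argument coincides with the paper's proof.
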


\begin{proof}
Let $j \in [n]$ such that $C_j^{T'}$ contains $d_{n+i}^{T'}$. Since $C_{j_0}^{T'}$ contains twin dots, we know that $j \neq j_0$. If $j < j_0$, we also have $d_{n+i}^{\tilde{T}} = d_{i,\min}^{\tilde{T}}$. By Definition~of $\tilde{T}$, this implies that $i \not\in \mathcal{S}(\tilde{T})$. Since $d_{n+i}^{\tilde{T}}$ is not free, it is then necessary that $C_i^{\tilde{T}}$ contains a dot labeled with $\beta_0^e$, hence $f(2i) = 2i$ by Lemma~\ref{lem:pointfixesietseulementsibeta0e}. If $j > j_0$, the proof of $f(2i) = 2i$ is the same as in the proof of Lemma~\ref{lem:sij1pluspetitquej2alorsaetb}.

Afterwards, since $f(2i) = 2i$ and $f(2 j_0) \neq 2j_0$ because $j_0 \in \mathcal{C}(\tilde{T})$, we have $i \neq j_0$. If $i < j_0$, then by Lemma~\ref{lem:pointfixesietseulementsibeta0e} and Remark~\ref{rem:aproposdupistollabeling}(e) the type labels of the dots of $C_i^{\tilde{T}}$ are different, hence the insertion labels of the dots of $T'$ are different by definition. If $i > j_0$, the proof of the insertion labels of the dots of $T'$ being different is the same as in the proof of Lemma~\ref{lem:sij1pluspetitquej2alorsaetb}.
\end{proof}

\begin{lem}
\label{lem:assialpha}
With the notations of Algorithm \ref{algo:switch}, the type label of a dot of the tableau $T'= M_{j_0,\gamma_0}(T)$ is $\alpha$ if and only if its insertion label is $a$.
\end{lem}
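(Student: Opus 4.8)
The plan is to reproduce the right-to-left induction used in the proof of Lemma~\ref{lem:insertionlabelsequalpistollabels}, proving that a dot of $T' = M_{j_0,\gamma_0}(T)$ has type label $\alpha$ if and only if its insertion label is $a$, assuming the statement already holds for the columns $C_{j+1}^{T'},\dots,C_n^{T'}$. Write $\tilde T = S_{(1,\dots,1)}(T)$ as in Algorithm~\ref{algo:mute}, so that $\varphi(\tilde T)=f$ by Proposition~\ref{prop:existenceetuniciteduswitch}, and recall the three regimes that build $T'$: the columns $C_j^{T'}$ with $j>j_0$ are produced by the restriction of Algorithm~\ref{algo:insertionlabeling} to the steps $j_0+1,\dots,n$; the column $C_{j_0}^{T'}$ carries the hand-set twin dots $d_i^{T'},d_{n+i}^{T'}$ (labeled $c$ and $\bar c$); and the columns $C_j^{T'}$ with $j<j_0$ copy the dot positions of $\tilde T$ with insertion labels chosen to match its type labels. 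For $j>j_0$ the argument is verbatim the one of Lemma~\ref{lem:insertionlabelsequalpistollabels}, with Lemma~\ref{lem:sij1pluspetitquej2alorsaetbBIS} playing the role of Lemma~\ref{lem:sij1pluspetitquej2alorsaetb}; in particular the preliminary claim that $d_{k,\min}^{T'}=d_{n+k}^{T'}$ forces a $\beta_0^e$ dot in $C_k^{T'}$ carries over unchanged.

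The genuinely new case is the twin column $C_{j_0}^{T'}$, and I expect it to be the main obstacle. Since $j_0\in\mathcal{C}(\tilde T)$, its twin dots $d_i^{T'},d_{n+i}^{T'}$ satisfy $i>j_0$ and, by Lemma~\ref{lem:pointfixesietseulementsibeta0e} applied to $\tilde T$, $f(2i)\neq 2i$. Both twins have digital label $i-j_0$ and respective arrivals $i$ and $n+i$, so their type labels are computed by Rule~II.1-b) of Algorithm~\ref{algo:pistollabeling} from $C_i^{T'}$, with $d_i^{T'}=d_{i,\min}^{T'}$ receiving $\gamma_{\mathrm{II}}$ and $d_{n+i}^{T'}$ receiving $\bar\gamma_{\mathrm{II}}$, where $\gamma_{\mathrm{II}}=\alpha$ exactly when the two dots of $C_i^{T'}$ have different type labels. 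The crux is to identify $\gamma_{\mathrm{II}}$ with $\gamma_0$: at step $i$ of the insertion the row $R_i$ is already occupied by $d_i^{T'}\in C_{j_0}^{T'}$, whose label is $c$, so Rule~I.2 of Algorithm~\ref{algo:insertionlabeling} forces $(l_o,l_e)=(a,b)$ when $c=a$ (i.e. $\gamma_0=\alpha$) and $(l_o,l_e)\in\{(a,a),(b,b)\}$ when $c=b$ (i.e. $\gamma_0=\beta$); the induction hypothesis on $C_i^{T'}$ then turns ``distinct insertion labels'' into ``distinct type labels'', giving $\gamma_{\mathrm{II}}=\gamma_0$, hence type $\gamma_0$ for $d_i^{T'}$ (matching $c$) and type $\bar\gamma_0$ for $d_{n+i}^{T'}$ (matching $\bar c$). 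I must also check that Rule~II.1-a) does not apply, i.e. that $C_i^{T'}$ carries no $\beta_0^e$ dot; this follows from $f(2i)\neq 2i$ by the same step-$i$ inspection (a $\beta_0^e$ even dot would require insertion height $0$ with label $b$, hence $\delta_e=0$), and it crucially avoids invoking $\varphi(T')=f$, which is not yet available at this point of the paper.

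Finally, for the columns $C_j^{T'}$ with $j<j_0$ I would show that the pistol labeling of $T'$ restricted to $C_1^{T'},\dots,C_{j_0-1}^{T'}$ coincides with that of $\tilde T$; since the insertion labels there are defined to be $a$ exactly on the $\alpha$-dots of $\tilde T$, this yields the claim. The digital and parity labels agree at once, because these columns carry the same dots as in $\tilde T$ and $\pi_j$ depends only on the columns to the left. The delicate point is the type labels, which via Rule~II read off a column $C_{j'}^{T'}$ with possibly $j'>j_0$, that is, in the rebuilt region; I would argue that the only data Rule~II extracts from $C_{j'}$ — whether it carries a $\beta_0^e$ dot, whether its two dots share their type label, and which twin is $d_{j',\min}$ — is preserved from $\tilde T$ to $T'$. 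The first two are governed by $f$ (via the step-$j'$ inspection of the insertion, as above), and $d_{j',\min}^{T'}=d_{j'}^{T'}$ holds throughout the rebuilt region because $\tilde T$ uses all $\mu_k=1$ and the insertion phase outputs, on the columns right of $j_0$, a configuration of the type described in Lemma~\ref{lem:imagePhiinclus}. Reconciling these invariants across the seam at column $j_0$ is the part that needs the most care.
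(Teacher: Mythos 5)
Your proposal follows the paper's proof essentially step for step: the columns to the right of $j_0$ are handled by rerunning the proof of Lemma~\ref{lem:insertionlabelsequalpistollabels} with Lemma~\ref{lem:sij1pluspetitquej2alorsaetbBIS} in place of Lemma~\ref{lem:sij1pluspetitquej2alorsaetb}; the twin column $C_{j_0}^{T'}$ is treated exactly as in the paper (excluding a $\beta_0^e$ dot in $C_i^{T'}$ via the step-$i$ inspection of Algorithm~\ref{algo:insertionlabeling} together with Lemma~\ref{lem:pointfixesietseulementsibeta0e} applied to $\tilde{T}$ rather than $T'$, then matching type and insertion labels through Rule II.1-b) of Algorithm~\ref{algo:pistollabeling}, the tie-break $d_{i,\min}^{T'}=d_i^{T'}$, and the inductive hypothesis on $C_i^{T'}$); and the columns to the left of $j_0$ are reduced, as in the paper, to showing their type labels coincide with those of $\tilde{T}$. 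Your sketch of that last part, which itemizes the data Rule II actually reads off the rebuilt region ($\beta_0^e$ presence, equality of type labels, location of $d_{j',\min}$), is in fact more explicit than the paper's own one-line appeal to ``a thorough analysis of the rules''.
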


\begin{proof}
The proof of the Lemma~for the dots of $C_{j_0+1}^{T'},C_{j_0+2}^{T'},\hdots,C_n^{T'}$ is the same as that of Lemma~\ref{lem:insertionlabelsequalpistollabels} where Lemma~\ref{lem:sij1pluspetitquej2alorsaetbBIS} plays the role of Lemma~\ref{lem:sij1pluspetitquej2alorsaetb}.

Now, let $i \in [j_0,2n]$ such that $C_{j_0}^{T'}$ contains the twin dots $d_i^{T'}$ and $d_{n+i}^{T'}$. Since $j_0 \in \mathcal{C}(\tilde{T})$, we know that $i > j_0$. Let $(c,\bar{c})$ and $\bar{\gamma_0}$ be defined as $(a,b)$ and $\beta$ respectively if $\gamma_0 = \alpha$, as $(b,a)$ and $\alpha$ otherwise. By Definition~the insertion labels of $d_i^{T'}$ and $d_{n+i}^{T'}$ are $c$ and $\bar{c}$ respectively. Afterwards, suppose that $C_i^{T'}$ contains a dot $d_e$ labeled with $\beta_0^e$. By Remark~\ref{rem:aproposdupistollabeling}(e), the other dot $d_o$ of $C_i^{T'}$ has the type label $\alpha$. Since $i > j_0$, we know that the insertion labels of $d_o$ and $d_e$ are $a$ and $b$ respectively, so these insertion labels have been defined following Rule I.2-a) of Algorithm \ref{algo:insertionlabeling}. Since the digital label of $d_e$ is $0 = h_e = f(2i)/2-i$ in this situation, by Lemma~\ref{lem:pointfixesietseulementsibeta0e} the column $C_i^{\tilde{T}}$ contains a dot labeled with $\beta_0^e$, which contradicts $j_0 \in \mathcal{C}(\tilde{T})$. So no dot of $C_i^{T'}$ is labeled with $\beta_0^e$, and the type labels of $d_i^{T'}$ and $d_{n+i}^{T'}$ are defined by Rule II.1-b) of Algorithm \ref{algo:pistollabeling}. Since the insertion labels of the dots of $C_i^{T'}$ are defined by Rule I.2- of Algorithm \ref{algo:insertionlabeling}, and since they are different labels if and only if these two dots of different type labels because $i > j_0$, then by Rule I.1-b) of Algorithm \ref{algo:pistollabeling}, the type labels of $d_i^{T'}$ and $d_{n+i}^{T'}$ are respectively $\alpha$ and $\beta$ if $c_0 = a$ (\textit{i.e.}, if $\gamma_0 = \alpha$), as $\beta$ and $\alpha$ otherwise (if $\gamma_0 = \beta$), in other words their type labels are respectively $\gamma_0$ and $\bar{\gamma_0}$, and the Lemma~is true for these two dots.

Finally, a thorough analysis of the rules of Algorithm \ref{algo:pistollabeling} following which the type labels of the dots of $C_1^{T'},C_2^{T'},\hdots,C_{j_0-1}^{T'}$ are defined show that these type labels are the same as in $\tilde{T}$, hence that the type label of each dot is $\alpha$ if and only if its insertion label is $a$ by definition.
\end{proof}

\begin{prop}
\label{prop:lemuteverifielesconditions}
With the notations of Algorithm \ref{algo:switch}, the tableau $T'= M_{j_0,\gamma}(T)$ is an element of $\T(T,j_0,\gamma)$.
\end{prop}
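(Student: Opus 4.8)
The plan is to check directly the four conditions that membership in $\T(T,j_0,\gamma)$ requires: that $\varphi(T') = f$ (so $T' \in \varphi^{-1}(f)$); that $\mathcal{C}(T') \cap [j_0-1] = \mathcal{C}(T) \cap [j_0-1]$ with $t_{T'}(k) = t_T(k)$ for the common indices; that $j_0 \in \mathcal{C}(T')$; and that $t_{T'}(j_0) = \gamma$. The whole argument would be organized around the reference tableau $\tilde{T} = S_{(1,\dots,1)}(T)$ of Algorithm \ref{algo:mute}, for which Proposition~\ref{prop:existenceetuniciteduswitch} gives $\varphi(\tilde{T}) = f$, $\mathcal{C}(\tilde{T}) = \mathcal{C}(T)$, and $t_{\tilde{T}}(k) = t_T(k)$ for all $k \in \mathcal{C}(T)$. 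By construction, the columns $C_1^{T'},\dots,C_{j_0}^{T'}$ carry the same dots as $C_1^{\tilde{T}},\dots,C_{j_0}^{\tilde{T}}$ (with the twins of $C_{j_0}$ relabeled $c,\bar{c}$), while $C_{j_0+1}^{T'},\dots,C_n^{T'}$ come from the restriction of Algorithm~\ref{algo:insertionlabeling}; Lemma~\ref{lem:assialpha} supplies the equivalence ``type label $\alpha$ $\iff$ insertion label $a$'' for $T'$, and its proof already shows that the type labels of $C_1^{T'},\dots,C_{j_0-1}^{T'}$ coincide with those of $\tilde{T}$.

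First I would prove $\varphi(T') = f$ column by column. For $j < j_0$, the dot positions of $C_j^{T'}$ coincide with those of $C_j^{\tilde{T}}$, so $\pi_j^{T'} = \pi_j^{\tilde{T}}$ and the digital labels agree; together with the agreement of type labels and Rule III.\ of Algorithm~\ref{algo:pistollabeling} this makes the full pistol labels of $C_j^{T'}$ and $C_j^{\tilde{T}}$ identical, whence $\varphi(T')|_{\{2j-1,2j\}} = \varphi(\tilde{T})|_{\{2j-1,2j\}} = f|_{\{2j-1,2j\}}$. For $j = j_0$, the column holds twins $d_i^{T'},d_{n+i}^{T'}$ with $i > j_0$, both of digital label $i-j_0 > 0$, and Definition~\ref{defi:varphi} forces $\varphi(T')(2j_0-1) = \varphi(T')(2j_0) = 2i$, equal to $f(2j_0-1) = f(2j_0)$ because $C_{j_0}^{\tilde{T}}$ holds the same twins. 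For $j > j_0$, the columns are produced by the insertion algorithm run with $f$, so the local case analysis of Proposition~\ref{prop:varphicircphi} applies verbatim, using Lemma~\ref{lem:assialpha} in place of Lemma~\ref{lem:insertionlabelsequalpistollabels}. This yields $T' \in \varphi^{-1}(f)$.

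Next I would compare $\mathcal{C}(T')$ and $\mathcal{C}(T)$ below $j_0$. For $j < j_0$, whether $C_j^{T'}$ contains twins $d_i,d_{n+i}$, and with which level $i$, is read off from the shared first $j_0-1$ columns and so is the same in $T'$ and $\tilde{T}$. The remaining clause ``no dot of $C_i$ is labeled $\beta_0^e$'' is delicate because $C_i$ may have index $\geq j_0$ and thus differ between $T'$ and $\tilde{T}$; but by Lemma~\ref{lem:pointfixesietseulementsibeta0e} this clause is equivalent to $f(2i) \neq 2i$, and since $\varphi(T') = f = \varphi(\tilde{T})$ it holds in $T'$ exactly when it holds in $\tilde{T}$. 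Hence $\mathcal{C}(T') \cap [j_0-1] = \mathcal{C}(\tilde{T}) \cap [j_0-1] = \mathcal{C}(T) \cap [j_0-1]$, and for such $j$ the label $t_{T'}(j)$, being the type label of $d_i^{T'}$, equals that of $d_i^{\tilde{T}}$ (identical pistol labels), i.e.\ $t_{\tilde{T}}(j) = t_T(j)$.

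Having established $T' \in \T(T,j_0)$, Lemma~\ref{lem:motivationdefplusprecise} gives $j_0 \in \mathcal{C}(T')$ (this is also visible directly, since $C_{j_0}^{T'}$ holds twins and, as noted inside the proof of Lemma~\ref{lem:assialpha}, no dot of the associated column is labeled $\beta_0^e$). Finally I would identify $t_{T'}(j_0)$ as the type label of $d_i^{T'}$: this dot carries insertion label $c$ (which is $a$ iff $\gamma = \alpha$), so Lemma~\ref{lem:assialpha} makes its type label $\gamma$, whence $t_{T'}(j_0) = \gamma$ and $T' \in \T(T,j_0,\gamma)$. I expect the main obstacle to be the equality $\varphi(T') = f$ on the columns beyond $j_0$: one must confirm that the induction hypotheses $H(j)$ needed to rerun Algorithm~\ref{algo:insertionlabeling} from step $j_0+1$ on the initial data $T^{j_0+1}$ hold, which I would deduce from $\varphi(\tilde{T}) = f$ and the agreement of the first $j_0$ columns; the second delicate point, where Lemma~\ref{lem:pointfixesietseulementsibeta0e} is indispensable, is the $\mathcal{C}$ comparison where the level $i$ may exceed $j_0$.
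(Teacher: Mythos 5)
Your proposal is correct and follows essentially the same route as the paper's own proof: both organize the argument around $\tilde{T} = S_{(1,\dots,1)}(T)$, establish $\varphi(T') = f$ by reusing the case analysis of Proposition~\ref{prop:varphicircphi} with Lemma~\ref{lem:assialpha} in place of Lemma~\ref{lem:insertionlabelsequalpistollabels} (together with $\epsilon_j^{T'} = \epsilon_j^{\tilde{T}}$ for $j \leq j_0$), compare $\mathcal{C}(T')$ with $\mathcal{C}(\tilde{T})$ below $j_0$ via Lemma~\ref{lem:pointfixesietseulementsibeta0e}, transfer type labels back to $T$ via Proposition~\ref{prop:existenceetuniciteduswitch}, and read off $t_{T'}(j_0) = \gamma$ from the insertion label $c$ through Lemma~\ref{lem:assialpha}. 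Your explicit three-way split of the verification of $\varphi(T') = f$ (columns $j < j_0$, $j = j_0$, $j > j_0$) and the remark about checking the hypotheses $H(j)$ merely spell out details the paper leaves implicit.
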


\begin{proof}
Let $g = \varphi(T')$. The proof of $g_{|[2j_0+1,2n]} = f_{|[2j_0+1,2n]}$ is the same as in the proof of Proposition~\ref{prop:varphicircphi} where Lemma~\ref{lem:assialpha} plays the role of Lemma~\ref{lem:pointfixesietseulementsibeta0e}. Also, in the proof of Lemma~\ref{lem:assialpha}, we show with precision that $\epsilon_j^{T'} = \epsilon_j^{\tilde{T}}$ for all $j \leq j_0$, so $g = f$.

Afterwards, for all $(i,j) \in [j_0]^2$, the twin dots $d_i^{T'}$ and $d_{n+i}^{T'}$ are by definition the two dots of $C_j^{T'}$ if and only if $d_i^{\tilde{T}}$ and $d_{n+i}^{\tilde{T}}$ are  the two dots of $C_j^{\tilde{T}}$. In that case, the integer $j$ belongs to $\mathcal{C}(T')$ if and only if no dot of $C_i^{T'}$ is labeled with $\beta_0^e$, which, in view of Lemma~\ref{lem:pointfixesietseulementsibeta0e}, is equivalent with $f(2i) > 2i$ and no dot of $C_i^{\tilde{T}}$ being labeled with $\beta_0^e$, hence with $j \in \mathcal{C}(\tilde{T})$. So $\mathcal{C}(T') \cap [j_0] = \mathcal{C}(\tilde{T}) \cap [j_0] = \mathcal{C}(T) \cap [j_0]$.

Finally, if $j < j_0$, the type label of $d_i^{T'}$ being $\alpha$ is equivalent with its insertion label being $a$ (by Lemma~\ref{lem:assialpha}), hence with the type label of $d_i^{\tilde{T}}$ being $\alpha$ by definition. By Proposition~\ref{prop:existenceetuniciteduswitch}, this is also equivalent with the type label of $d_i^T$ being $\alpha$. In other words $t_{T'}(j) = t_{T}(j)$, and $T' \in \T(T,j_0)$. With precision, by Part I.2- of Algorithm \ref{algo:mute}, the insertion label of the lower dot of $C_{j_0}^{T'}$ is $c$ defined as $a$ if $\gamma_0 = \alpha$, as $b$ otherwise. So its type label is $\gamma_0$ by Lemma~\ref{lem:assialpha}, and $T' \in \T(T,j_0,\gamma)$.
\end{proof}

\begin{rem}
\label{rem:siTnonvidealors}
Proposition~\ref{prop:lemuteverifielesconditions} implies that for all $T \in \T_n$, $j \in \mathcal{C}(T)$ and $\gamma \in \left\{\alpha,\beta\right\}$, the set $\T(T,j,\gamma)$ is not empty.
\end{rem}

\begin{rem}
\label{rem:transitif}
For all $f \in \SP_n$, we can now construct every element of $\varphi^{-1}(f)$. Indeed, every two elements $T$ and $T'$ of $\varphi^{-1}(f)$ are linked by a finite numbers of applications of the kind $S_{\mu}$ and $M_{j_0,\gamma_0}$. To prove it, it is enough to show that we can obtain $\phi(f) \in \varphi^{-1}(f)$ by applying a finite number of these applications to any element $T \in \varphi^{-1}(f)$. Recall that $\phi(f)$ is the unique element of $\tilde{T}_n$ in $\varphi^{-1}(f)$ because $\varphi_{|\tilde{T}_n}$ is injective by Proposition~\ref{prop:auplusunelementdeTtilde}, so we only need to show that $T$ is mapped to an element of $\tilde{T}_n$ by a finite number of these applications. We do that as follows.

If $\mathcal{C}(T)$ is not empty, let $j_0$ be its minimal element. We define $T_1$ as $M_{j_0,\alpha}(T)$. Afterwards, if $\mathcal{C}(T_1) \cap [j_0+1,n]$ is not empty, we set $j_1$ as its minimal element, and we define $T_2$ as $M_{j_1,\alpha}(T_1)$. Clearly, by induction, we define a finite sequence $(T=T_0,T_1,T_2,\hdots,T_k)$ (for some $k \geq 0$, where the case $k = 0$ corresponds to $\mathcal{C}(T)$ being empty) such that $t_{T_k}(j) = \alpha$ for all $j \in \mathcal{C}(T_k)$ in view of Proposition~\ref{prop:lemuteverifielesconditions}. Finally, let $m = \# \mathcal{S}(T_k) \in [0,n]$. If $m = 0$, then obviously $T_k \in \tilde{T}_n$. Otherwise, let $\mu = (1,1,\hdots,1) \in \left\{-1,1\right\}^m$, then $S_{\mu}(T_k) \in \tilde{T}_n$ in view of Proposition~\ref{prop:existenceetuniciteduswitch}.
\end{rem}

\subsection{Proof of Formula (\ref{eq:lasommesurlesTdunmemeF})}

\begin{lem}
\label{lem:sijalorsminpourtoutlemonde}
Let $f \in \SP_n$, $T_0 \in \varphi^{-1}(f)$ and $k \in \left\{0\right\} \sqcup \mathcal{C}(T_0)$. We consider $T \in \T(T_0,k)$ (where $\T(T_0,0)$ is defined as $\varphi^{-1}(f)$). If $\mathcal{C}(T_0) \cap [k+1,n] = \emptyset$, then $\mathcal{C}(T) \cap [k+1,n] = \emptyset$. Otherwise, we have $$\min \mathcal{C}(T_0) \cap [k+1,n] =  \min \mathcal{C}(T) \cap [k+1,n].$$
\end{lem}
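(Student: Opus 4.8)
The statement I need to prove is Lemma~\ref{lem:sijalorsminpourtoutlemonde}, which concerns how the minimal element of $\mathcal{C}$ above a certain threshold $k$ is preserved within the equivalence class $\T(T_0,k)$. Let me understand what $\T(T_0,k)$ means: it's the set of $T \in \varphi^{-1}(f)$ agreeing with $T_0$ on $\mathcal{C} \cap [k-1]$ and on the type labels there.

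Let me think about the structure. We have $T_0 \in \varphi^{-1}(f)$ and $k \in \{0\} \sqcup \mathcal{C}(T_0)$. The set $\T(T_0,k)$ consists of tableaux $T \in \varphi^{-1}(f)$ that "agree with $T_0$ below $k$" in the sense of $\mathcal{C}$-data. I want to show the minimal element of $\mathcal{C}$ in $[k+1,n]$ is the same for $T$ as for $T_0$.

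Let me work out the proof plan.
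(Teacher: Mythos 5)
Your proposal stops before the proof begins: after restating the definition of $\T(T_0,k)$ and announcing a plan, no argument at all is given. There is no identification of what would go wrong if the two minima differed, no use of the hypothesis $\varphi(T_0)=\varphi(T)=f$, and no use of the agreement of the $\mathcal{C}$-data on $[k-1]$. As it stands this is not a partial proof with a fixable gap; it is only the setup.

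For comparison, the missing content is the following contradiction argument (the paper's route). Set $j_0$ and $j$ to be the minima of $\mathcal{C}(T_0)\cap[k+1,n]$ and $\mathcal{C}(T)\cap[k+1,n]$ respectively (or $n+1$ when the set is empty), assume $j_0\neq j$, and reduce by symmetry (using $\T(T_0,k)=\T(T,k)$, which relies on Lemma~\ref{lem:motivationdefplusprecise}) to the case $j_0>j$. Then $j\in\mathcal{C}(T)$ forces the dots of $C_j^{T}$ to be twin dots $d_i^{T}$ and $d_{n+i}^{T}$ with $f(2j-1)=f(2j)=2i$, hence $\epsilon_j^{T}=\left\{\alpha_{i-j}^o,\beta_{i-j}^e\right\}$ and $d_{i,\min}^{T}=d_i^{T}\in C_j^{T}$. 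Lemma~\ref{lem:uniquefacondechangerdetypelabel}, combined with the agreement of $\mathcal{C}$-data and type labels below $j$, then forces $\epsilon_j^{T_0}=\epsilon_j^{T}$. Finally, since $j\notin\mathcal{C}(T_0)$ and no dot of $C_i^{T_0}$ is labeled $\beta_0^e$ (via Lemma~\ref{lem:pointfixesietseulementsibeta0e}), Remark~\ref{rem:aproposdupistollabeling}(f) shows the dots of $C_j^{T_0}$ are $d_{\left(\pi_j^{T_0}\right)^{-1}(i)}^{T_0}$ and $d_{\left(\pi_j^{T_0}\right)^{-1}(n+i)}^{T_0}$ yet are not twins, so one of $d_i^{T_0}$, $d_{n+i}^{T_0}$ lies in a strictly earlier column and $d_{i,\min}^{T_0}\notin C_j^{T_0}$ --- contradicting Lemma~\ref{lem:ouestdmin}, which places $d_{i,\min}^{T_0}$ and $d_{i,\min}^{T}$ in the same column because $\varphi(T_0)=\varphi(T)$. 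None of these steps, nor any alternative to them, appears in your proposal.
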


\begin{proof}
Let $j_0$ (respectively $j$) be defined as $n+1$ if $\mathcal{C}(T_0) \cap [k+1,n] = \emptyset$ (respectively $\mathcal{C}(T) \cap [k+1,n] = \emptyset$), as $\min \mathcal{C}(T_0) \cap [k+1,n]$ (respectively $\min \mathcal{C}(T) \cap [k+1,n]$) otherwise. The proof of the Lemma~consists in proving the equality $j_0 = j$. Assume that $j_0 \neq j$. Since $\T(T_0,j) = \T(T,j)$, should $(T_0,T)$ be replaced with $(T,T_0)$, we can suppose that $j_0 > j$ (which implies that $\mathcal{C}(T) \neq \emptyset$ and $j \in [n]$). Since $f = \varphi(T)$, by Definition~\ref{defi:varphi} we know that $f(2j-1) = f(2j) = 2i$ where the dots of $C_{j}^{T}$ are the twin dots $d_i^{T}$ and $d_{n+i}^{T}$. Also, by Part III. of Algorithm \ref{algo:pistollabeling}, since $d_i^{T}$ and $d_{n+i}^{T}$ have the same digital label $i-j$, then they have different type labels, and we obtain $\epsilon_{j}^{T} = \left\{\alpha_{i-j}^o,\beta_{i-j}^e\right\}$. Also, in this situation $d_{i,\min}^{T} = d_i^{T} \in C_{j}^{T}$. But $f$ is also $\varphi(T_0)$, so by Lemma~\ref{lem:uniquefacondechangerdetypelabel} it is necessary that $\epsilon_{j}^{T_0} = \epsilon_{j}^{T} = \left\{\alpha_{i-j}^o,\beta_{i-j}^e\right\}$. Now, since $j \not\in \mathcal{C}(T_0)$ and $C_i^{T_0}$ has no dot labeled with $\beta_0^e$ (otherwise, by Lemma~\ref{lem:pointfixesietseulementsibeta0e} it would imply that $f(2i) = 2i$ and that $C_i^{T}$ also contains a dot labeled with $\beta_0^e$, which would contradict $j \in \mathcal{C}(T)$), this implies that the dots of $C_{j}^{T_0}$ are not twin dots. Still, since $\epsilon_{j}^{T_0} = \left\{\alpha_{i-j}^o,\beta_{i-j}^e\right\}$, by Remark~\ref{rem:aproposdupistollabeling}(f) the dots of $C_{j}^{T_0}$ are $d_{\left(\pi_{j}^{T_0}\right)^{-1}(i)}^{T_0}$ and $d_{\left(\pi_{j}^{T_0}\right)^{-1}(n+i)}^{T_0}$ ; since they are not the twin dots $d_i^{T_0}$ and $d_{n+i}^{T_0}$, this implies that either $d_i^{T_0}$ or $d_{n+i}^{T_0}$ belongs to a column $C_{j'}^{T_0}$ with $j'<j'$, hence $d_{i,\min}^{T_0} \not\in C_{j}^{T_0}$, which is absurd in view of Lemma~\ref{lem:ouestdmin} and the fact that $d_{i,\min}^{T} \in C_{j}^{T}$. So $j_0 = j$.
\end{proof}

\begin{lem}
\label{cor:sifreechezlunetpaschezlautrealorschangementlabel}
Let $f \in \SP_n$ and $(T,T') \in \varphi^{-1}(f)^2$. Let $i \in [n]$ such that $d_{n+i}^T$ is not free and $d_{n+i}^{T'}$ is free. Let also $(j_1,j_2) \in [i]^2$ such that $d_{i,\min}^T \in C_{j_1}^T$ and $d \in C_{j_2}^T$ where $d$ is the twin dot of $d_{i,\min}^T$. Then, there exists $k \in \mathcal{C}(T) \cap \mathcal{C}(T') \in [j_2-1]$ such that $t_T(k) \neq t_{T'}(k)$.
\end{lem}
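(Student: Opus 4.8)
The plan is to exhibit a single column $j \le j_2$ at which the pistol-label sets $\epsilon_j^T$ and $\epsilon_j^{T'}$ differ, and then to invoke Lemma~\ref{lem:uniquefacondechangerdetypelabel}, which turns such a discrepancy into the desired $k \in \mathcal{C}(T) \cap \mathcal{C}(T') \cap [j-1] \subseteq \mathcal{C}(T) \cap \mathcal{C}(T') \cap [j_2-1]$ with $t_T(k) \neq t_{T'}(k)$. Before that I would record three preliminary facts. First, since $d_{n+i}^{T'}$ is free, Proposition~\ref{prop:egalitedesstats} gives $\ndf_i(f) = \ngr_i(T') = 1$, so $2i$ is neither a fixed point nor a doubled fixed point of $f$; the same proposition applied to $T$ then shows $d_{n+i}^T$ is not grounded, whence by Lemma~\ref{lem:pointfixesietseulementsibeta0e} neither $C_i^T$ nor $C_i^{T'}$ carries a $\beta_0^e$ dot. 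Second, by Lemma~\ref{lem:ouestdmin} the column $j_1$ containing $d_{i,\min}$ depends only on $f$, so $d_{i,\min}^{T'} \in C_{j_1}^{T'}$ as well, and because $d_{n+i}^{T'}$ lies in a column $> i \ge j_1$ we get $d_{i,\min}^{T'} = d_i^{T'}$ (in particular $j_1 < i$, since $j_1 = i$ would force both twins into $C_i^T$ and make $2i$ a doubled fixed point). Third --- the \emph{freeness obstruction}, which I will use repeatedly --- any dot of $C_j^{T'}$ with $j \le i$ whose digital label equals $i-j$ must have $T'$-path arrival $i$, not $n+i$: an arrival $n+i$ would, by reversing the rules of Definition~\ref{defi:TPath}, place $d_{n+i}^{T'}$ in a column $\le j \le i$, contradicting its freeness.

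Next I would compute the relevant type labels. Write $\gamma$ (resp.\ $\gamma'$) for the type label of $d_{i,\min}^T$ (resp.\ $d_{i,\min}^{T'}$); by Remark~\ref{rem:aproposdupistollabeling}(g) this is exactly the ``$\gamma$'' attached by Rule II.1-b) of Algorithm~\ref{algo:pistollabeling} to $C_i^T$ (resp.\ $C_i^{T'}$). The dot $d \in C_{j_2}^T$ is the twin of $d_{i,\min}^T$, hence is \emph{not} $d_{i,\min}^T$; it has digital label $i-j_2$ and its arrival is its own row (a fixed point of $\pi_{j_2}^T$), so assuming $j_2 < i$, Rule II.1-b) assigns it the type $\bar\gamma$, uniformly in the two cases $d=d_i^T$ and $d=d_{n+i}^T$. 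On the other side, since $\varphi(T)=\varphi(T')=f$ and $d$ points to $2i$, the column $C_{j_2}^{T'}$ contains a dot $d^{\circ}$ of digital label $i-j_2$; by the freeness obstruction its arrival is $i$, so its arrival-dot is $d_i^{T'}=d_{i,\min}^{T'}$ and Rule II.1-b) gives $d^{\circ}$ the type $\gamma'$.

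I would then conclude by a dichotomy. If $\gamma=\gamma'$, then any dot of $C_{j_2}^{T'}$ of digital label $i-j_2$ carries type $\gamma'=\bar{\bar\gamma}$, so none of them has the pistol label of $d$ (whose type is $\bar\gamma=\bar\gamma'$); hence $\epsilon_{j_2}^T \neq \epsilon_{j_2}^{T'}$. If instead $\gamma \neq \gamma'$, then $d_{i,\min}^T$ and $d_{i,\min}^{T'}$ lie in the same column $C_{j_1}$ with the same digital label $i-j_1$ but different types, and the same reasoning (a matching dot of $C_{j_1}^{T'}$ would, by the freeness obstruction, be $d_{i,\min}^{T'}$ of type $\gamma'\neq\gamma$) yields $\epsilon_{j_1}^T \neq \epsilon_{j_1}^{T'}$ with $j_1 \le j_2$. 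In either case a column $j \le j_2$ with $\epsilon_j^T \neq \epsilon_j^{T'}$ has been produced, and Lemma~\ref{lem:uniquefacondechangerdetypelabel} finishes the argument.

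The step I expect to be the main obstacle is the type computation of $d$ and $d^{\circ}$ through Rule II.1-b), and specifically the boundary case $j_2=i$ (digital label $0$), where the labels are governed by Rule II.2 rather than II.1-b). There I would have to re-derive the opposition of types by hand: using that $2i$ is not a fixed point, so $C_i$ has no $\beta_0^e$ dot and $d$ cannot be labeled $\alpha_0^e$ by Remark~\ref{rem:aproposdupistollabeling}(b), one forces $d$ to be the odd dot of $C_i^T$, and then a case check on the second dot of $C_i$ against Rules II.2-a) and II.2-b) should again give $d$ and $d^{\circ}$ opposite types. The freeness obstruction is what makes each of these sub-cases close, so the bulk of the care goes into verifying it in the degenerate digital-label-$0$ situation.
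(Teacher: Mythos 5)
Your proposal follows the paper's own proof quite closely in its skeleton: the same reduction (produce a column at which the pistol-label sets $\epsilon_j^T$ and $\epsilon_j^{T'}$ differ, then invoke Lemma~\ref{lem:uniquefacondechangerdetypelabel}), the same three preliminary facts obtained from the same sources (Proposition~\ref{prop:egalitedesstats} and Lemma~\ref{lem:pointfixesietseulementsibeta0e} for the absence of $\beta_0^e$ in $C_i^T$ and $C_i^{T'}$; Lemma~\ref{lem:ouestdmin} plus freeness for $d_{i,\min}^{T'}=d_i^{T'}$; path reversal and Remark~\ref{rem:aproposdupistollabeling}(f) for the fact that $C_{j_2}^{T'}$ has exactly one dot of digital label $i-j_2$, with arrival $i$), and the same type computation through Rule II.1-b). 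Where you deviate, you actually improve on the paper. In the Rule II.1-b) case the paper asserts flatly that $d$ and $d'$ ``have different type labels''; but the type of $d$ is $\bar\gamma$ computed with respect to $T$'s pair $(\gamma,\bar\gamma)$, while the type of $d'$ is $\gamma'$ computed with respect to $T'$'s pair, so the paper's assertion silently assumes $\gamma=\gamma'$, i.e.\ that $d_{i,\min}^T$ and $d_{i,\min}^{T'}$ have the same type label --- which is exactly what is not known a priori. Your dichotomy supplies the missing case: when $\gamma=\gamma'$ you get $\epsilon_{j_2}^T\neq\epsilon_{j_2}^{T'}$ as in the paper, and when $\gamma\neq\gamma'$ you get $\epsilon_{j_1}^T\neq\epsilon_{j_1}^{T'}$ instead, which still yields $k\in[j_1-1]\subseteq[j_2-1]$ through Lemma~\ref{lem:uniquefacondechangerdetypelabel}. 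That branch is sound (the freeness obstruction is valid at $j_1\le i$, and the unique digital-label-$(i-j_1)$ dot of $C_{j_1}^{T'}$ is $d_i^{T'}$ itself, of type $\gamma'$).

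The genuine gap is the boundary case $j_2=i$, which you defer with ``should again give opposite types.'' This case is not marginal: it is permitted by the hypotheses ($(j_1,j_2)\in[i]^2$), and the paper devotes a full paragraph to it, arguing via Rule II.2 that the digital-label-$0$ dot $d$ of $C_i^T$ has type $\beta$ while the digital-label-$0$ dot of $C_i^{T'}$ (whose arrival is $i$ and whose arrival dot is $d_i^{T'}=d_{i,\min}^{T'}$) must have type $\alpha$, so that $\epsilon_i^T\neq\epsilon_i^{T'}$. Your sketch is on the right track and is completable --- one can check that $d$ receives type $\beta$ under every applicable sub-rule (II.2-a) is excluded since it would create a $\beta_0^e$ dot in $C_i^T$, and both II.2-b)i.\ and II.2-b)ii.\ give $\beta$, whether $d=d_i^T$ or $d=d_{n+i}^T$), while both sub-rules give $d^{\circ}$ the type $\alpha$; note this route even avoids the paper's own unjustified claim that $d=d_{n+i}^T$ in this case. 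But as written your treatment of $j_2=i$ is a conjecture, not an argument, and since the whole lemma rests on a delicate reading of Algorithm~\ref{algo:pistollabeling}'s rules, this is the one place where the proposal falls short of a proof. (A smaller slip: you infer ``$2i$ is not a fixed point'' directly from $\ndf_i(f)=1$, which only excludes a \emph{doubled} fixed point; the correct deduction, which you do give immediately afterwards, is via non-groundedness of $d_{n+i}^T$ and Lemma~\ref{lem:pointfixesietseulementsibeta0e}.)
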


\begin{proof}
By Lemma~\ref{lem:uniquefacondechangerdetypelabel}, it suffices that show that $\epsilon_{j_2}^T \neq \epsilon_{j_2}^{T'}$. Since $d_{n+i}^{T'}$ is free, and in view of Lemma~\ref{lem:ouestdmin}, we know that $d_{i,\min}^{T'} = d_i^{T'} \in C_{j_1}^{T'}$. Also, since $d_{n+i}^{T'}$ is in particular non grounded, Proposition~\ref{prop:egalitedesstats} and Lemma~\ref{lem:pointfixesietseulementsibeta0e} imply that no dot of $C_i^T$ or $C_i^{T'}$ is labeled with $\beta_0^e$. Now, by Rule I. of Algorithm \ref{algo:pistollabeling}, the digital label of $d$ is $i-j_2$. By Definition~\ref{defi:varphi}, this implies that either $f(2j_2-1) = 2i$ or $f(2j_2) = 2i$, hence at least one of the dots of $C_{j_2}^{T'}$ has the digital label $i-j_2$. In fact, since $d_{n+i}^T$ is free, by Remark~\ref{rem:aproposdupistollabeling}(f) there exists exactly one such dot : the dot $d'= d_{\left(\pi_j^{T'}\right)^{-1}(i)}^{T'}$. Now, the type labels of $d$ and $d'$ are defined by the same rule of Algorithm \ref{algo:pistollabeling}, and this rule is either Rule II.1-b) or Rule~II.2-.

Suppose that the type labels of $d$ and $d'$ are defined by Rule II.1-b). Since $d \in \left\{d_i^T,d_{n+i}^T\right\} \backslash \left\{d_{i,\min}^T\right\}$ but $d'= d_{\left(\pi_j^{T'}\right)^{-1}(i)}^{T'}$ where $d_i^{T'} = d_{i,\min}^{T'}$, the type labels of $d$ and $d'$ are different. Assume now that $\epsilon_{j_2}^T = \epsilon_{j_2}^{T'}$. Then these two sets equal $\left\{\alpha_{i-j_2}^o,\beta_{i-j_2}^e\right\}$, which contradicts $d'$ being the only dot of $C_{j_2}^{T'}$ that has the digital label $i-j_2$. So $\epsilon_{j_2}^T \neq \epsilon_{j_2}^{T'}$.

Suppose finally that the type labels of $d$ and $d'$ are defined by Rule II.2-. In this situation, since $i = j_2$, we know that $d = d_{n+i}^T$. Whether its type label is defined by Rule II.2-a), Rule II.2-b)i. or Rule II.2-b)ii., it equals $\beta$. Afterwards, since the dots of $C_{j_2}^{T'}$ (among which is $d'$) have different digital labels, the type label of $d'$ is defined by Rule II.2-b), and whether it follows Rule II.2-b)i. or Rule II.2-b)ii., the dots of $C_{j_2}^{T'}$ have the same type label. Consequently, if we suppose that $\epsilon_{j_2}^T = \epsilon_{j_2}^{T'}$, then they must have the type label $\beta$ following Rule II.2-b)ii., which is absurd because it implies that $d_i^{T'} \neq d_{i,\min}^{T'}$. So $\epsilon_{j_2}^T \neq \epsilon_{j_2}^{T'}$.
\end{proof}

\begin{prop}
\label{prop:egalitetableauxpistolets}
For all $f \in \SP_n$, we have
\begin{equation}
\label{eq:equationdulemme}
\sum_{T \in \varphi^{-1}(f)} 2^{\fr(T)} = 2^{\ndf(f)}.
\end{equation}
\end{prop}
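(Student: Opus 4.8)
The plan is to start from the identity $\fr(T) = \ndf(f) - \#\mathcal{S}(T) - \#\mathcal{C}(T)$ supplied by Remark~\ref{rem:sommedefreeplusSplusCegalegrounded}, which rewrites the desired equality \eqref{eq:equationdulemme} as the purely combinatorial statement $\sum_{T \in \varphi^{-1}(f)} 2^{-\#\mathcal{S}(T) - \#\mathcal{C}(T)} = 1$. The whole argument then reduces to evaluating this weighted count of the fiber $\varphi^{-1}(f)$, and I would organize it around the two operations already constructed: Algorithm~\ref{algo:switch} (which acts on the $\mathcal{S}$-data) and Algorithm~\ref{algo:mute} (which acts on the $\mathcal{C}$-data).

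First I would record a collapsing lemma for the switch operation: if $X \subseteq \varphi^{-1}(f)$ is stable under all the $S_\mu$ and every element of $X$ shares the same set $\mathcal{C}$ and the same type function $t$ on it, then $X$ is a single $S_\mu$-orbit, whence $\sum_{T \in X} 2^{-\#\mathcal{S}(T)} = 2^{\#\mathcal{S}} \cdot 2^{-\#\mathcal{S}} = 1$. The orbit size $2^{\#\mathcal{S}}$ and the $S_\mu$-stability are immediate from Proposition~\ref{prop:existenceetuniciteduswitch}; the uniqueness of the reduced element (the one with all $\mu_k = 1$, i.e. satisfying condition (a) of Definition~\ref{defi:soustableaux}) follows by feeding the equality of the type functions into Lemma~\ref{lem:uniquefacondechangerdetypelabel} to force $\epsilon_j^{T} = \epsilon_j^{T'}$ for every $j$, and then running the column-by-column reconstruction of Proposition~\ref{prop:auplusunelementdeTtilde}, which uses only the equality of the $\epsilon_j$ together with condition (a).

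The core is then an induction that peels off the elements of $\mathcal{C}$ one position at a time. For $T_0 \in \varphi^{-1}(f)$ and $k \in \{0\} \sqcup \mathcal{C}(T_0)$ I would set $L(T_0,k) = \sum_{T} 2^{-\#\mathcal{S}(T) - \#(\mathcal{C}(T) \cap [k,n])}$, the sum running over $\T(T_0,k)$ with the convention $\T(T_0,0) := \varphi^{-1}(f)$, and prove $L(T_0,k) = 1$ by strong induction on $n-k$; the case $k=0$ then yields exactly $\sum_{T} 2^{-\#\mathcal{S}(T)-\#\mathcal{C}(T)} = 1$ (reducing, when $\mathcal{C}(T_0)\neq\emptyset$, to $k=\min\mathcal{C}(T_0)$ via $\T(T_0,0)=\T(T_0,\min\mathcal{C}(T_0))$). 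When $\mathcal{C}(T_0) \cap [k+1,n] = \emptyset$, Lemma~\ref{lem:sijalorsminpourtoutlemonde} freezes $\mathcal{C}$ above $k$, so the collapsing lemma applied to the fully determined classes $\T(T_0,k,\alpha)$ and $\T(T_0,k,\beta)$ (nonempty by Remark~\ref{rem:siTnonvidealors}) gives $L(T_0,k) = \tfrac12 + \tfrac12 = 1$; when $\mathcal{C}(T_0)=\emptyset$ and $k=0$, the whole fiber is one orbit and $L=1$. Otherwise let $j_1 = \min \mathcal{C}(T_0) \cap [k+1,n]$, which by Lemma~\ref{lem:sijalorsminpourtoutlemonde} is common to the entire class; splitting $\T(T_0,k) = \T(T_0,k,\alpha) \sqcup \T(T_0,k,\beta)$ along $t(k)$ and identifying each piece $\T(T_0,k,\gamma)$ with $\T(T_\gamma, j_1)$ for a representative $T_\gamma$ (so that exactly one $\mathcal{C}$-position, namely $k$, is shed) reduces each half to $\tfrac12 L(T_\gamma, j_1) = \tfrac12$ by the induction hypothesis, legitimate since $n - j_1 < n - k$; hence $L(T_0,k) = 1$ again.

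The main obstacle is the bookkeeping of this recursion: verifying the set-theoretic identity $\T(T_0,k,\gamma) = \T(T_\gamma, j_1)$ together with the accompanying exponent accounting $\#(\mathcal{C}(T) \cap [k,n]) = 1 + \#(\mathcal{C}(T) \cap [j_1,n])$ for $T$ in that class. Both rest on Lemma~\ref{lem:sijalorsminpourtoutlemonde} providing a well-defined next occupied position $j_1$ above $k$, and on the fact (Lemma~\ref{lem:motivationdefplusprecise} and Proposition~\ref{prop:lemuteverifielesconditions}) that the classes $\T(T_0,k,\gamma)$ are precisely the tableaux whose $\mathcal{C}$-data and type labels are frozen strictly below $j_1$. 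I would take particular care to run the induction on $n-k$ rather than on $\#\mathcal{C}$, since the number of occupied positions above a given column need not be constant across the fiber once the lower type labels are allowed to vary, so only the decrease $n-j_1 < n-k$ is guaranteed.
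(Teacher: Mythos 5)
Your proposal is correct and is essentially the paper's own proof in normalized form: your invariant $L(T_0,k)=1$ is precisely the paper's Formula~(\ref{eq:celleci}) divided by $2^{\ndf(f)}$, and your recursion that peels off $\min \mathcal{C}(T_0)\cap[k+1,n]$, splits each class as $\T(\cdot,\cdot,\alpha)\sqcup\T(\cdot,\cdot,\beta)$ to gain the factor $\tfrac12+\tfrac12$, and terminates with an orbit count, matches the paper's induction step by step, resting on the same Lemma~\ref{lem:sijalorsminpourtoutlemonde}, Lemma~\ref{cor:sifreechezlunetpaschezlautrealorschangementlabel}, Remark~\ref{rem:siTnonvidealors} and Proposition~\ref{prop:existenceetuniciteduswitch}. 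The one refinement worth keeping is your explicit collapsing lemma (reduced representatives with all $\mu_k=1$, forced equal via Lemma~\ref{lem:uniquefacondechangerdetypelabel} and the reconstruction of Proposition~\ref{prop:auplusunelementdeTtilde}), which spells out why each frozen class is a single $S_\mu$-orbit of size $2^{\#\mathcal{S}}$ --- a point the paper leaves implicit when it deduces the cardinality of $\T(T_p,j_p,\gamma)$ directly from Proposition~\ref{prop:existenceetuniciteduswitch}.
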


\begin{proof}
Let $T_0 \in \varphi^{-1}(f)$. If $\mathcal{C}(T_0) = \emptyset$, then $\mathcal{C}(T) = \emptyset$ for all $T \in \varphi^{-1}(f)$ by Lemma~\ref{lem:sijalorsminpourtoutlemonde}, and $\fr(T) = \fr(T_0)$ by Corollary \ref{cor:sifreechezlunetpaschezlautrealorschangementlabel}. Consequently, we obtain
\begin{equation}
\label{eq:cardinalzero}
\sum_{T \in \varphi^{-1}(f)} 2^{\fr(T)} = 2^{\fr(T_0)} \times \# \varphi^{-1}(f). 
\end{equation}
Now, by Proposition~\ref{prop:existenceetuniciteduswitch}, we know that $\#\varphi^{-1}(f) = 2^{\# \mathcal{S}(T_0)}$. Also, by Remark~\ref{rem:sommedefreeplusSplusCegalegrounded}, the integer $\fr(T_0) + \mathcal{S}(T_0)$ equals $\ndf(f)$ because $\mathcal{C}(T_0) =~0$ by hypothesis, hence $2^{\fr(T_0)} \times \# \varphi^{-1}(f) = 2^{\ndf(f)}$, and Formula (\ref{eq:cardinalzero}) becomes Formula (\ref{eq:equationdulemme}).

It remains to prove Formula (\ref{eq:equationdulemme}) if there exists $T \in \varphi^{-1}(f)$ such that $\mathcal{C}(T)$ is not empty, \textit{i.e.}, if there exists $j \in [n]$ such that $\T_f(j) \neq \emptyset$. Under that hypothesis, let $\left\{j_1, j_2, \hdots, j_p\right\}_< = \left\{j \in [n] : \T_f(j) \neq \emptyset\right\}$. Let $T_p$ be any element of $\T_f(j_p)$, and $\gamma \in \left\{\alpha,\beta\right\}$. We consider $\tilde{T}_p \in \T(T_p,j_p,\gamma)$ (which is not empty in view of Remark~\ref{rem:siTnonvidealors}). For all $T \in \T(T_p,j_p,\gamma)$, we have $\fr(T) = \fr(\tilde{T}_p)$ by Corollary \ref{cor:sifreechezlunetpaschezlautrealorschangementlabel}. Consequently, we obtain
\begin{equation}
\label{eq:cardinal}
\sum_{T \in \T(T_p,j_p,\gamma)} 2^{\fr(T)} = 2^{\fr(\tilde{T}_p)} \times \# \T(T_p,j_p,\gamma). 
\end{equation}
Now, in view of Proposition~\ref{prop:existenceetuniciteduswitch}, the cardinality of $\T(T_p,j_p,\gamma)$ equals $2^{\#\mathcal{S}(\tilde{T}_p)}$, so Formula (\ref{eq:cardinal}) becomes
\begin{equation}
\label{eq:cardinal2}
\sum_{T \in \T(T_p,j_p,\gamma)} 2^{\fr(T)} = 2^{\fr(\tilde{T}_p)+ \#\mathcal{S}(\tilde{T}_p)}. 
\end{equation}
By Remark~\ref{rem:sommedefreeplusSplusCegalegrounded}, we know that 
$\fr(\tilde{T}_p)+ \#\mathcal{S}(\tilde{T}_p) = \ndf(f) - \mathcal{C}(\tilde{T}_p)$, and by hypothesis $\tilde{T}_p \in \T(T_p,j_p)$, so $\mathcal{C}(\tilde{T}_p) = \mathcal{C}(T_p)$, and Formula (\ref{eq:cardinal2}) becomes
\begin{equation}
\label{eq:cardinal3}
\sum_{T \in \T(T_p,j_p,\gamma)} 2^{\fr(T)} = 2^{\ndf(f)-\#\mathcal{C}(T_p)}. 
\end{equation}
Since Formula (\ref{eq:cardinal3}) is true for all $\gamma \in \left\{\alpha,\beta\right\}$, and in view of the equality $\T(T_p,j_p) = \T(T_p,j_p,\alpha) \sqcup \T(T_p,j_p,\beta)$, we obtain
\begin{equation}
\label{eq:cardinal4}
\sum_{T \in \T(T_p,j_p)} 2^{\fr(T)} = 2^{\ndf(f)-\#\mathcal{C}(T_p)+1}. 
\end{equation}
Suppose now that for some $q \in [2,p]$, and for all $T_q \in \T_f(j_q)$, we have the Formula~
\begin{equation}
\label{eq:celleci}
\sum_{T \in \Tab(T_q,j_q)} 2^{\fr(T)} = 2^{\ndf(f)-\#(\mathcal{C}(T_q) \cap [j_q-1])}
\end{equation}
(it is true for $q= p$ in view of Formula~\ref{eq:cardinal4}). Let $T_{q-1} \in \T_f(j_{q-1})$, $\gamma \in \left\{\alpha,\beta\right\}$ and $\tilde{T}_{q-1} \in \T(T_{q-1},j_{q-1},\gamma)$ (which is not empty in view of Remark~\ref{rem:siTnonvidealors}). We first intend to prove the following Formula~:
\begin{equation}
\label{eq:egalitepourgamma}
\sum_{T \in \T(T_{q-1},j_{q-1},\gamma)} 2^{\fr(T)} = 2^{\ndf(f)- \# (\mathcal{C}(T_{q-1}) \cap [j_{q-1}])}. 
\end{equation}
\begin{itemize}
\item If $[j_{q-1}+1,n] \cap \mathcal{C}(\tilde{T}_{q-1}) = \emptyset$, by Lemma~\ref{lem:sijalorsminpourtoutlemonde}, it is necessary that $[j_{q-1}+1,n] \cap \mathcal{C}(T) = \emptyset$ for all $T \in \T(T_{q-1},j_{q-1},\gamma)$, and $\fr(\tilde{T}_{q-1}) = \fr(T)$ by Corollary \ref{cor:sifreechezlunetpaschezlautrealorschangementlabel}, hence
\begin{equation}
\label{eq:cardinalbis}
\sum_{T \in \T(T_{q-1},j_{q-1},\gamma)} 2^{\fr(T)} = 2^{\fr(\tilde{T}_{q-1})} \times \# \T(T_{q-1},j_{q-1},\gamma).
\end{equation}
By Proposition~\ref{prop:existenceetuniciteduswitch}, we have $\# \T(T_{q-1},j_{q-1},\gamma) = 2^{\# \mathcal{S}(\tilde{T}_{q-1})}$. By Remark~\ref{rem:sommedefreeplusSplusCegalegrounded}, the integer $\fr(\tilde{T}_{q-1}) + \mathcal{S}(\tilde{T}_{q-1})$ equals the integer $\ndf(f)-\mathcal{C}(\tilde{T}_{q-1}) = \ndf(f) - \# (\mathcal{C}(T_{q-1}) \cap [j_{q-1}])$ because $\mathcal{C}(\tilde{T}_{q-1}) \cap [j_{q-1}+1,n] = \emptyset$ by hypothesis, hence Formula~(\ref{eq:cardinalbis}) becomes Formula~(\ref{eq:egalitepourgamma}).
\item Otherwise, let $j = \min [j_{q-1}+1,n] \cap \mathcal{C}(\tilde{T}_{q-1})$. By Lemma~\ref{lem:sijalorsminpourtoutlemonde}, it is necessary that $j$ is also $\min [j_{q-1}+1,n] \cap \mathcal{C}(T)$ for all $T \in \T(T_{q-1},j_{q-1},\gamma)$. In other words, the set $\T(T_{q-1},j_{q-1},\gamma)$ is in fact $\T(\tilde{T}_{q-1},j)$.  Let $q' > q-1$ such that $j = j_{q'}$. By hypothesis, we know that 
\begin{equation}
\label{eq:cardinal4deux}
\sum_{T \in \T(\tilde{T}_{q-1},j_{q'})} 2^{\fr(T)} =  2^{\ndf(f) - \# (\mathcal{C}(\tilde{T}_{q-1}) \cap [j_{q'}-1])}.
\end{equation}
Since $j_{q'} = \min \mathcal{C}(\tilde{T}_{q-1}) \cap [j_{q-1}+1,n]$ and $\tilde{T}_{q-1} \in \T(T_{q-1},j_{q-1})$, we have $\# (\mathcal{C}(\tilde{T}_{q-1}) \cap [j_{q'}-1]) = \# (\mathcal{C}(T_q) \cap [j_{q-1}])$, hence Formula~(\ref{eq:cardinal4deux}) becomes Formula~(\ref{eq:egalitepourgamma}) in view of $\T(T_{q-1},j_{q-1},\gamma) = \T(T_{q-1},j_{q-1},\gamma)$.
\end{itemize}
So Formula~(\ref{eq:egalitepourgamma}) is true for all $\gamma \in \left\{\alpha,\beta\right\}$, and in view of $$\T(T_{q-1},j_{q-1}) = \T(T_{q-1},j_{q-1},\alpha) \sqcup \T(T_{q-1},j_{q-1},\beta),$$we obtain
\begin{align*}
\sum_{T \in \T(T_{q-1},j_{q-1})} 2^{\fr(T)} &=  2^{\ndf(f) - \# (\mathcal{C}(T_{q-1}) \cap [j_{q-1}])+1}\\ 
&= 2^{\ndf(f) - \# (\mathcal{C}(T_{q-1}) \cap [j_{q-1}-1])}.
\end{align*}
So Formula~(\ref{eq:celleci}) is true for all $q \in [p]$ by induction. In particular, for $q = 1$, we obtain Formula~(\ref{eq:equationdulemme}).
\end{proof}

This ends the proof of Theorem~\ref{theo:bigeni}.

\section*{Acknowledgements}
Ange Bigeni is affiliated to the National Research University Higher School of Economics (HSE) and may be reached at: \href{mailto:abigeni@hse.ru}{abigeni@hse.ru}.

\begin{appendices}
\section{Pistol labeling of the tableau $T_1 \in \Tab_7$}
\label{annex:pistollabelingT1}

We give in Figure \ref{fig:pistollabelingT1annex} the details of the pistol labeling of the tableau $T_1 \in \Tab_7$ depicted in Figure \ref{fig:T0}. From $j$ from $7$ down to $1$, we show how the two dots of the column $C_j^{T_1}$ receive their pistol labels. In the following, we specify which rule of Part II. of Algorithm \ref{algo:pistollabeling} is applied, for $j$ from $7$ down to $1$.
\begin{itemize}

\item $j = 7$ : Rule II.2-a) for both dots $d_{8}^{T_1}$ and $d_{12}^{T_1}$.
\item $j = 6$ : Rule II.1-a) for $d_{14}^{T_1}$, then Rule II.2-b)ii. for $d_9^{T_1}$.
\item $j = 5$ : Rule II.1-a) for $d_7^{T_1}$ and Rule II.1-b) for $d_{13}^{T_1}$.
\item $j = 4$ : Rule II.1-b) for $d_5^{T_1}$, then Rule II.2-b)ii. for $d_{10}^{T_1}$.
\item $j = 3$ : Rule II.1-b) for $d_6^{T_1}$, then Rule II.2-b)i. for $d_3^{T_1}$.
\item $j = 2$ : Rule II.1-b) for $d_4^{T_1}$, then Rule II.2-b)ii. for $d_2^{T_1}$.
\item $j = 1$ : Rule II.1-a) for $d_{11}^{T_1}$, then Rule II.2-b)i. for $d_1^{T_1}$.

\end{itemize}

\begin{figure}[!htbp]
    \centering

\begin{tikzpicture}[scale=0.4]

\draw (0,2) -- (0,-32);
\draw (0,2) -- (0,-32);
\draw (9,2) -- (9,-32);
\draw (18,2) -- (18,-32);
\draw (0,2) rectangle (27,-32);
\draw (0,-15) rectangle (27,-16);
\draw (0,2) rectangle (27,1);

\draw (4.5,1.5) node[scale=0.8]{$j = 7$};
\draw (13.5,1.5) node[scale=0.8]{$j = 6$};
\draw (22.5,1.5) node[scale=0.8]{$j = 5$};
\draw (4.5,-15.5) node[scale=0.8]{$j = 4$};
\draw (13.5,-15.5) node[scale=0.8]{$j = 3$};
\draw (22.5,-15.5) node[scale=0.8]{$j = 2$};

\begin{scope}[xshift=1cm,yshift=-14cm]
\draw (0,0) grid[step=1] (7,14);
\draw (0,0) -- (7,7);
\draw (0,14) [dashed] -- (7,7);
\fill (0.5,0.5) circle (0.2);
\fill (0.5,10.5) circle (0.2);
\fill (1.5,1.5) circle (0.2);
\fill (1.5,3.5) circle (0.2);
\fill (2.5,2.5) circle (0.2);
\fill (2.5,5.5) circle (0.2);
\fill (3.5,4.5) circle (0.2);
\fill (3.5,9.5) circle (0.2);
\fill (4.5,6.5) circle (0.2);
\draw (4.5,12.5) node[scale=0.6]{$\bigstar$};
\draw (5.5,8.5) node[scale=0.6]{$\bigstar$};
\draw (5.5,13.5) node[scale=0.6]{$\bigstar$};
\draw [red] (6.5,7.5) node[scale=0.8]{$\beta_0^e$};
\draw [blue] (6.5,11.5) node[scale=0.8]{$\alpha_0^o$};

\draw (-0.5,0.5) node[scale=0.7]{$1$};
\draw (-0.5,1.5) node[scale=0.7]{$2$};
\draw (-0.5,2.5) node[scale=0.7]{$3$};
\draw (-0.5,3.5) node[scale=0.7]{$4$};
\draw (-0.5,4.5) node[scale=0.7]{$5$};
\draw (-0.5,5.5) node[scale=0.7]{$6$};
\draw (-0.5,6.5) node[scale=0.7]{$7$};
\draw (-0.5,12.5) node[scale=0.7]{$8$};
\draw (-0.5,11.5) node[scale=0.7]{$9$};
\draw (-0.5,10.5) node[scale=0.7]{$10$};
\draw (-0.5,9.5) node[scale=0.7]{$11$};
\draw (-0.5,8.5) node[scale=0.7]{$12$};
\draw (-0.5,7.5) node[scale=0.7]{$13$};
\draw (-0.5,13.5) node[scale=0.7]{$14$};

\draw [fill=gray,opacity=0.25] (0,12) rectangle (1,13);
\draw [fill=gray,opacity=0.25] (1,11) rectangle (2,12);
\draw [fill=gray,opacity=0.25] (2,10) rectangle (3,11);
\draw [fill=gray,opacity=0.25] (3,9) rectangle (4,10);
\draw [fill=gray,opacity=0.25] (4,8) rectangle (5,9);
\draw [fill=gray,opacity=0.25] (5,7) rectangle (6,8);

\draw [fill=gray,opacity=0.25] (0,0) rectangle (1,1);
\draw [fill=gray,opacity=0.25] (1,1) rectangle (2,2);
\draw [fill=gray,opacity=0.25] (2,2) rectangle (3,3);
\draw [fill=gray,opacity=0.25] (3,3) rectangle (4,4);
\draw [fill=gray,opacity=0.25] (4,4) rectangle (5,5);
\draw [fill=gray,opacity=0.25] (5,5) rectangle (6,6);

\draw [fill=blue,opacity=0.25] (6,6) rectangle (7,7);
\draw [fill=red,opacity=0.25] (6,13) rectangle (7,14);

\draw [red, very thick] (6,7.5) [densely dashed] -- (5.5,7.5);
\draw [red, very thick] (5.5,7.5) [densely dashed] -- (5.5,13.5);
\draw [red, very thick] (5.5,13.5) [densely dashed] -- (6.5,13.5);
\draw [red] (6.5,13.5) circle (0.7);

\draw [blue, very thick] (6,11.5) [densely dashed] -- (1.5,11.5);
\draw [blue, very thick] (1.5,11.5) [densely dashed] -- (1.5,3.5);
\draw [blue, very thick] (1.5,3.5) [densely dashed] -- (3.5,3.5);
\draw [blue, very thick] (3.5,3.5) [densely dashed] -- (3.5,4.5);
\draw [blue, very thick] (3.5,4.5) [densely dashed] -- (4.5,4.5);
\draw [blue, very thick] (4.5,4.5) [densely dashed] -- (4.5,6.5);
\draw [blue, very thick] (4.5,6.5) [densely dashed] -- (6.5,6.5);
\draw [blue] (6.5,6.5) circle (0.7);
\end{scope}

\begin{scope}[xshift=10cm,yshift=-14cm]
\draw (0,0) grid[step=1] (7,14);
\draw (0,0) -- (7,7);
\draw (0,14) [dashed] -- (7,7);
\fill (0.5,0.5) circle (0.2);
\fill (0.5,10.5) circle (0.2);
\fill (1.5,1.5) circle (0.2);
\fill (1.5,3.5) circle (0.2);
\fill (2.5,2.5) circle (0.2);
\fill (2.5,5.5) circle (0.2);
\fill (3.5,4.5) circle (0.2);
\fill (3.5,9.5) circle (0.2);
\fill (4.5,6.5) circle (0.2);
\draw (4.5,12.5) node[scale=0.6]{$\bigstar$};
\draw [blue] (5.5,8.5) node[scale=0.8]{$\alpha_0^o$};
\draw [red] (5.5,13.5) node[scale=0.8]{$\beta_1^e$};
\draw [black] (6.5,7.5) node[scale=0.8]{$\beta_0^e$};
\draw [black] (6.5,11.5) node[scale=0.8]{$\alpha_0^o$};

\draw (-0.5,0.5) node[scale=0.7]{$1$};
\draw (-0.5,1.5) node[scale=0.7]{$2$};
\draw (-0.5,2.5) node[scale=0.7]{$3$};
\draw (-0.5,3.5) node[scale=0.7]{$4$};
\draw (-0.5,4.5) node[scale=0.7]{$5$};
\draw (-0.5,5.5) node[scale=0.7]{$6$};
\draw (-0.5,6.5) node[scale=0.7]{$7$};
\draw (-0.5,12.5) node[scale=0.7]{$8$};
\draw (-0.5,11.5) node[scale=0.7]{$9$};
\draw (-0.5,10.5) node[scale=0.7]{$10$};
\draw (-0.5,9.5) node[scale=0.7]{$11$};
\draw (-0.5,8.5) node[scale=0.7]{$12$};
\draw (-0.5,7.5) node[scale=0.7]{$13$};
\draw (-0.5,13.5) node[scale=0.7]{$14$};

\draw [fill=gray,opacity=0.25] (0,12) rectangle (1,13);
\draw [fill=gray,opacity=0.25] (1,11) rectangle (2,12);
\draw [fill=gray,opacity=0.25] (2,10) rectangle (3,11);
\draw [fill=gray,opacity=0.25] (3,9) rectangle (4,10);
\draw [fill=gray,opacity=0.25] (4,8) rectangle (5,9);
\draw [fill=gray,opacity=0.25] (5,7) rectangle (6,8);

\draw [fill=gray,opacity=0.25] (0,0) rectangle (1,1);
\draw [fill=gray,opacity=0.25] (1,1) rectangle (2,2);
\draw [fill=gray,opacity=0.25] (2,2) rectangle (3,3);
\draw [fill=gray,opacity=0.25] (3,3) rectangle (4,4);
\draw [fill=gray,opacity=0.25] (4,4) rectangle (5,5);
\draw [fill=gray,opacity=0.25] (5,5) rectangle (6,6);

\draw [fill=blue,opacity=0.25] (5,5) rectangle (6,7);
\draw [fill=red,opacity=0.25] (5,13) rectangle (6,14);
\draw [fill=red,opacity=0.25] (5,7) rectangle (6,8);

\draw [red] (5.5,13.5) circle (0.7);

\draw [blue, very thick] (5,8.5) [densely dashed] -- (4.5,8.5);
\draw [blue, very thick] (4.5,8.5) [densely dashed] -- (4.5,12.5);
\draw [blue, very thick] (4.5,12.5) [densely dashed] -- (0.5,12.5);
\draw [blue, very thick] (0.5,12.5) [densely dashed] -- (0.5,10.5);
\draw [blue, very thick] (0.5,10.5) [densely dashed] -- (2.5,10.5);
\draw [blue, very thick] (2.5,10.5) [densely dashed] -- (2.5,5.5);
\draw [blue, very thick] (2.5,5.5) [densely dashed] -- (5.5,5.5);
\draw [blue] (5.5,5.5) circle (0.7);
\end{scope}

\begin{scope}[xshift=19cm,yshift=-14cm]
\draw (0,0) grid[step=1] (7,14);
\draw (0,0) -- (7,7);
\draw (0,14) [dashed] -- (7,7);
\fill (0.5,0.5) circle (0.2);
\fill (0.5,10.5) circle (0.2);
\fill (1.5,1.5) circle (0.2);
\fill (1.5,3.5) circle (0.2);
\fill (2.5,2.5) circle (0.2);
\fill (2.5,5.5) circle (0.2);
\fill (3.5,4.5) circle (0.2);
\fill (3.5,9.5) circle (0.2);
\draw [blue] (4.5,6.5) node[scale=0.8]{$\alpha_2^o$};
\draw [blue] (4.5,12.5) node[scale=0.8]{$\alpha_1^e$};
\draw [black] (5.5,8.5) node[scale=0.8]{$\alpha_0^o$};
\draw [black] (5.5,13.5) node[scale=0.8]{$\beta_1^e$};
\draw [red] (6.5,7.5) node[scale=0.8]{\underline{$\beta_0^e$}};
\draw [blue] (6.5,11.5) node[scale=0.8]{\underline{$\alpha_0^o$}};

\draw (-0.5,0.5) node[scale=0.7]{$1$};
\draw (-0.5,1.5) node[scale=0.7]{$2$};
\draw (-0.5,2.5) node[scale=0.7]{$3$};
\draw (-0.5,3.5) node[scale=0.7]{$4$};
\draw (-0.5,4.5) node[scale=0.7]{$5$};
\draw (-0.5,5.5) node[scale=0.7]{$6$};
\draw (-0.5,6.5) node[scale=0.7]{$7$};
\draw (-0.5,12.5) node[scale=0.7]{$8$};
\draw (-0.5,11.5) node[scale=0.7]{$9$};
\draw (-0.5,10.5) node[scale=0.7]{$10$};
\draw (-0.5,9.5) node[scale=0.7]{$11$};
\draw (-0.5,8.5) node[scale=0.7]{$12$};
\draw (-0.5,7.5) node[scale=0.7]{$13$};
\draw (-0.5,13.5) node[scale=0.7]{$14$};

\draw [fill=gray,opacity=0.25] (0,12) rectangle (1,13);
\draw [fill=gray,opacity=0.25] (1,11) rectangle (2,12);
\draw [fill=gray,opacity=0.25] (2,10) rectangle (3,11);
\draw [fill=gray,opacity=0.25] (3,9) rectangle (4,10);
\draw [fill=gray,opacity=0.25] (4,8) rectangle (5,9);
\draw [fill=gray,opacity=0.25] (5,7) rectangle (6,8);

\draw [fill=gray,opacity=0.25] (0,0) rectangle (1,1);
\draw [fill=gray,opacity=0.25] (1,1) rectangle (2,2);
\draw [fill=gray,opacity=0.25] (2,2) rectangle (3,3);
\draw [fill=gray,opacity=0.25] (3,3) rectangle (4,4);
\draw [fill=gray,opacity=0.25] (4,4) rectangle (5,5);
\draw [fill=gray,opacity=0.25] (5,5) rectangle (6,6);

\draw [fill=blue,opacity=0.25] (4,4) rectangle (5,7);
\draw [fill=red,opacity=0.25] (4,13) rectangle (5,14);
\draw [fill=red,opacity=0.25] (4,7) rectangle (5,9);

\draw [blue, very thick] (4,12.5) [densely dashed] -- (0.5,12.5);
\draw [blue, very thick] (0.5,12.5) [densely dashed] -- (0.5,10.5);
\draw [blue, very thick] (0.5,10.5) [densely dashed] -- (2.5,10.5);
\draw [blue, very thick] (2.5,10.5) [densely dashed] -- (2.5,5.5);
\draw [blue, very thick] (2.5,5.5) [densely dashed] -- (4.5,5.5);
\draw [blue] (4.5,5.5) circle (0.7);

\draw [blue] (4.5,6.5) circle (0.7);

\end{scope}

\begin{scope}[xshift=1cm,yshift=-31cm]
\draw (0,0) grid[step=1] (7,14);
\draw (0,0) -- (7,7);
\draw (0,14) [dashed] -- (7,7);
\fill (0.5,0.5) circle (0.2);
\fill (0.5,10.5) circle (0.2);
\fill (1.5,1.5) circle (0.2);
\fill (1.5,3.5) circle (0.2);
\fill (2.5,2.5) circle (0.2);
\fill (2.5,5.5) circle (0.2);
\draw [red] (3.5,4.5) node[scale=0.8]{$\beta_1^e$};
\draw [red] (3.5,9.5) node[scale=0.8]{$\beta_0^o$};
\draw [blue] (4.5,6.5) node[scale=0.8]{\underline{$\alpha_2^o$}};
\draw [blue] (4.5,12.5) node[scale=0.8]{\underline{$\alpha_1^e$}};
\draw [black] (5.5,8.5) node[scale=0.8]{$\alpha_0^o$};
\draw [black] (5.5,13.5) node[scale=0.8]{$\beta_1^e$};
\draw [black] (6.5,7.5) node[scale=0.8]{$\beta_0^e$};
\draw [black] (6.5,11.5) node[scale=0.8]{$\alpha_0^o$};

\draw (-0.5,0.5) node[scale=0.7]{$1$};
\draw (-0.5,1.5) node[scale=0.7]{$2$};
\draw (-0.5,2.5) node[scale=0.7]{$3$};
\draw (-0.5,3.5) node[scale=0.7]{$4$};
\draw (-0.5,4.5) node[scale=0.7]{$5$};
\draw (-0.5,5.5) node[scale=0.7]{$6$};
\draw (-0.5,6.5) node[scale=0.7]{$7$};
\draw (-0.5,12.5) node[scale=0.7]{$8$};
\draw (-0.5,11.5) node[scale=0.7]{$9$};
\draw (-0.5,10.5) node[scale=0.7]{$10$};
\draw (-0.5,9.5) node[scale=0.7]{$11$};
\draw (-0.5,8.5) node[scale=0.7]{$12$};
\draw (-0.5,7.5) node[scale=0.7]{$13$};
\draw (-0.5,13.5) node[scale=0.7]{$14$};

\draw [fill=gray,opacity=0.25] (0,12) rectangle (1,13);
\draw [fill=gray,opacity=0.25] (1,11) rectangle (2,12);
\draw [fill=gray,opacity=0.25] (2,10) rectangle (3,11);
\draw [fill=gray,opacity=0.25] (3,9) rectangle (4,10);
\draw [fill=gray,opacity=0.25] (4,8) rectangle (5,9);
\draw [fill=gray,opacity=0.25] (5,7) rectangle (6,8);

\draw [fill=gray,opacity=0.25] (0,0) rectangle (1,1);
\draw [fill=gray,opacity=0.25] (1,1) rectangle (2,2);
\draw [fill=gray,opacity=0.25] (2,2) rectangle (3,3);
\draw [fill=gray,opacity=0.25] (3,3) rectangle (4,4);
\draw [fill=gray,opacity=0.25] (4,4) rectangle (5,5);
\draw [fill=gray,opacity=0.25] (5,5) rectangle (6,6);

\draw [fill=blue,opacity=0.25] (3,3) rectangle (4,7);
\draw [fill=red,opacity=0.25] (3,13) rectangle (4,14);
\draw [fill=red,opacity=0.25] (3,7) rectangle (4,10);

\draw [red] (3.5,9.5) circle (0.7);
\draw [red] (3.5,4.5) circle (0.7);

\end{scope}

\begin{scope}[xshift=10cm,yshift=-31cm]
\draw (0,0) grid[step=1] (7,14);
\draw (0,0) -- (7,7);
\draw (0,14) [dashed] -- (7,7);
\fill (0.5,0.5) circle (0.2);
\fill (0.5,10.5) circle (0.2);
\fill (1.5,1.5) circle (0.2);
\fill (1.5,3.5) circle (0.2);
\draw [red] (2.5,2.5) node[scale=0.8]{$\beta_0^e$};
\draw [blue] (2.5,5.5) node[scale=0.8]{$\alpha_3^o$};
\draw [black] (3.5,4.5) node[scale=0.8]{$\beta_1^e$};
\draw [black] (3.5,9.5) node[scale=0.8]{$\beta_0^o$};
\draw [black] (4.5,6.5) node[scale=0.8]{$\alpha_2^o$};
\draw [black] (4.5,12.5) node[scale=0.8]{$\alpha_1^e$};
\draw [blue] (5.5,8.5) node[scale=0.8]{\underline{$\alpha_0^o$}};
\draw [red] (5.5,13.5) node[scale=0.8]{\underline{$\beta_1^e$}};
\draw [black] (6.5,7.5) node[scale=0.8]{$\beta_0^e$};
\draw [black] (6.5,11.5) node[scale=0.8]{$\alpha_0^o$};

\draw (-0.5,0.5) node[scale=0.7]{$1$};
\draw (-0.5,1.5) node[scale=0.7]{$2$};
\draw (-0.5,2.5) node[scale=0.7]{$3$};
\draw (-0.5,3.5) node[scale=0.7]{$4$};
\draw (-0.5,4.5) node[scale=0.7]{$5$};
\draw (-0.5,5.5) node[scale=0.7]{$6$};
\draw (-0.5,6.5) node[scale=0.7]{$7$};
\draw (-0.5,12.5) node[scale=0.7]{$8$};
\draw (-0.5,11.5) node[scale=0.7]{$9$};
\draw (-0.5,10.5) node[scale=0.7]{$10$};
\draw (-0.5,9.5) node[scale=0.7]{$11$};
\draw (-0.5,8.5) node[scale=0.7]{$12$};
\draw (-0.5,7.5) node[scale=0.7]{$13$};
\draw (-0.5,13.5) node[scale=0.7]{$14$};

\draw [fill=gray,opacity=0.25] (0,12) rectangle (1,13);
\draw [fill=gray,opacity=0.25] (1,11) rectangle (2,12);
\draw [fill=gray,opacity=0.25] (2,10) rectangle (3,11);
\draw [fill=gray,opacity=0.25] (3,9) rectangle (4,10);
\draw [fill=gray,opacity=0.25] (4,8) rectangle (5,9);
\draw [fill=gray,opacity=0.25] (5,7) rectangle (6,8);

\draw [fill=gray,opacity=0.25] (0,0) rectangle (1,1);
\draw [fill=gray,opacity=0.25] (1,1) rectangle (2,2);
\draw [fill=gray,opacity=0.25] (2,2) rectangle (3,3);
\draw [fill=gray,opacity=0.25] (3,3) rectangle (4,4);
\draw [fill=gray,opacity=0.25] (4,4) rectangle (5,5);
\draw [fill=gray,opacity=0.25] (5,5) rectangle (6,6);

\draw [fill=blue,opacity=0.25] (2,2) rectangle (3,7);
\draw [fill=red,opacity=0.25] (2,13) rectangle (3,14);
\draw [fill=red,opacity=0.25] (2,7) rectangle (3,11);

\draw [blue] (2.5,5.5) circle (0.7);
\draw [red] (2.5,2.5) circle (0.7);

\end{scope}

\begin{scope}[xshift=19cm,yshift=-31cm]
\draw (0,0) grid[step=1] (7,14);
\draw (0,0) -- (7,7);
\draw (0,14) [dashed] -- (7,7);
\fill (0.5,0.5) circle (0.2);
\fill (0.5,10.5) circle (0.2);
\draw [blue] (1.5,1.5) node[scale=0.8]{$\alpha_0^o$};
\draw [red] (1.5,3.5) node[scale=0.8]{$\beta_2^e$};
\draw [black] (2.5,2.5) node[scale=0.8]{$\beta_0^e$};
\draw [black] (2.5,5.5) node[scale=0.8]{$\alpha_3^o$};
\draw [red] (3.5,4.5) node[scale=0.8]{\underline{$\beta_1^e$}};
\draw [red] (3.5,9.5) node[scale=0.8]{\underline{$\beta_0^o$}};
\draw [black] (4.5,6.5) node[scale=0.8]{$\alpha_2^o$};
\draw [black] (4.5,12.5) node[scale=0.8]{$\alpha_1^e$};
\draw [black] (5.5,8.5) node[scale=0.8]{$\alpha_0^o$};
\draw [black] (5.5,13.5) node[scale=0.8]{$\beta_1^e$};
\draw [black] (6.5,7.5) node[scale=0.8]{$\beta_0^e$};
\draw [black] (6.5,11.5) node[scale=0.8]{$\alpha_0^o$};

\draw [fill=gray,opacity=0.25] (0,12) rectangle (1,13);
\draw [fill=gray,opacity=0.25] (1,11) rectangle (2,12);
\draw [fill=gray,opacity=0.25] (2,10) rectangle (3,11);
\draw [fill=gray,opacity=0.25] (3,9) rectangle (4,10);
\draw [fill=gray,opacity=0.25] (4,8) rectangle (5,9);
\draw [fill=gray,opacity=0.25] (5,7) rectangle (6,8);

\draw [fill=gray,opacity=0.25] (0,0) rectangle (1,1);
\draw [fill=gray,opacity=0.25] (1,1) rectangle (2,2);
\draw [fill=gray,opacity=0.25] (2,2) rectangle (3,3);
\draw [fill=gray,opacity=0.25] (3,3) rectangle (4,4);
\draw [fill=gray,opacity=0.25] (4,4) rectangle (5,5);
\draw [fill=gray,opacity=0.25] (5,5) rectangle (6,6);

\draw (-0.5,0.5) node[scale=0.7]{$1$};
\draw (-0.5,1.5) node[scale=0.7]{$2$};
\draw (-0.5,2.5) node[scale=0.7]{$3$};
\draw (-0.5,3.5) node[scale=0.7]{$4$};
\draw (-0.5,4.5) node[scale=0.7]{$5$};
\draw (-0.5,5.5) node[scale=0.7]{$6$};
\draw (-0.5,6.5) node[scale=0.7]{$7$};
\draw (-0.5,12.5) node[scale=0.7]{$8$};
\draw (-0.5,11.5) node[scale=0.7]{$9$};
\draw (-0.5,10.5) node[scale=0.7]{$10$};
\draw (-0.5,9.5) node[scale=0.7]{$11$};
\draw (-0.5,8.5) node[scale=0.7]{$12$};
\draw (-0.5,7.5) node[scale=0.7]{$13$};
\draw (-0.5,13.5) node[scale=0.7]{$14$};

\draw [fill=blue,opacity=0.25] (1,1) rectangle (2,7);
\draw [fill=red,opacity=0.25] (1,13) rectangle (2,14);
\draw [fill=red,opacity=0.25] (1,7) rectangle (2,12);

\draw [red] (1.5,3.5) circle (0.7);
\draw [blue] (1.5,1.5) circle (0.7);

\end{scope}

\begin{scope}[xshift=9cm,yshift=-32cm]

\draw (0,0) rectangle (9,-17);
\draw (0,0) rectangle (9,-1);

\draw (4.5,-0.5) node[scale=0.8]{$j = 1$};

\begin{scope}[xshift=1cm,yshift=-16cm]
\draw (0,0) grid[step=1] (7,14);
\draw (0,0) -- (7,7);
\draw (0,14) [dashed] -- (7,7);
\draw [blue] (0.5,0.5) node[scale=0.8]{$\alpha_0^o$};
\draw [red] (0.5,10.5) node[scale=0.8]{$\beta_2^e$};
\draw [black] (1.5,1.5) node[scale=0.8]{$\alpha_0^o$};
\draw [black] (1.5,3.5) node[scale=0.8]{$\beta_2^e$};
\draw [black] (2.5,2.5) node[scale=0.8]{\underline{$\beta_0^e$}};
\draw [black] (2.5,5.5) node[scale=0.8]{$\alpha_3^o$};
\draw [black] (3.5,4.5) node[scale=0.8]{$\beta_1^e$};
\draw [black] (3.5,9.5) node[scale=0.8]{$\beta_0^o$};
\draw [black] (4.5,6.5) node[scale=0.8]{$\alpha_2^o$};
\draw [black] (4.5,12.5) node[scale=0.8]{$\alpha_1^e$};
\draw [black] (5.5,8.5) node[scale=0.8]{$\alpha_0^o$};
\draw [black] (5.5,13.5) node[scale=0.8]{$\beta_1^e$};
\draw [black] (6.5,7.5) node[scale=0.8]{$\beta_0^e$};
\draw [black] (6.5,11.5) node[scale=0.8]{$\alpha_0^o$};

\draw [fill=gray,opacity=0.25] (0,12) rectangle (1,13);
\draw [fill=gray,opacity=0.25] (1,11) rectangle (2,12);
\draw [fill=gray,opacity=0.25] (2,10) rectangle (3,11);
\draw [fill=gray,opacity=0.25] (3,9) rectangle (4,10);
\draw [fill=gray,opacity=0.25] (4,8) rectangle (5,9);
\draw [fill=gray,opacity=0.25] (5,7) rectangle (6,8);

\draw [fill=gray,opacity=0.25] (0,0) rectangle (1,1);
\draw [fill=gray,opacity=0.25] (1,1) rectangle (2,2);
\draw [fill=gray,opacity=0.25] (2,2) rectangle (3,3);
\draw [fill=gray,opacity=0.25] (3,3) rectangle (4,4);
\draw [fill=gray,opacity=0.25] (4,4) rectangle (5,5);
\draw [fill=gray,opacity=0.25] (5,5) rectangle (6,6);

\draw (-0.5,0.5) node[scale=0.7]{$1$};
\draw (-0.5,1.5) node[scale=0.7]{$2$};
\draw (-0.5,2.5) node[scale=0.7]{$3$};
\draw (-0.5,3.5) node[scale=0.7]{$4$};
\draw (-0.5,4.5) node[scale=0.7]{$5$};
\draw (-0.5,5.5) node[scale=0.7]{$6$};
\draw (-0.5,6.5) node[scale=0.7]{$7$};
\draw (-0.5,12.5) node[scale=0.7]{$8$};
\draw (-0.5,11.5) node[scale=0.7]{$9$};
\draw (-0.5,10.5) node[scale=0.7]{$10$};
\draw (-0.5,9.5) node[scale=0.7]{$11$};
\draw (-0.5,8.5) node[scale=0.7]{$12$};
\draw (-0.5,7.5) node[scale=0.7]{$13$};
\draw (-0.5,13.5) node[scale=0.7]{$14$};

\draw [fill=blue,opacity=0.25] (0,0) rectangle (1,7);
\draw [fill=red,opacity=0.25] (0,7) rectangle (1,14);

\draw [red] (0.5,10.5) circle (0.7);
\draw [blue] (0.5,0.5) circle (0.7);

\end{scope}

\end{scope}

\end{tikzpicture}

 \caption{Pistol labeling of $T_1 \in \Tab_7$.}
\label{fig:pistollabelingT1annex}

\end{figure}

\section{Computation of $\Phi(f_1)$}
\label{annex:insertionlabelingf1}

We give in Figure \ref{fig:computationPhif1annex} the details of the computation of $\Phi(f_1) \in \Tab_7$ where $f_1 \in \SP_7$ is the surjective pistol depicted in Figure \ref{fig:f0}. From $j$ from $1$ to $7$, we show how the two labeled dots of $C_j^{\Phi(f_1)}$ are inserted. At each step $j$, on the left of every suitable row, we specify the integer $\delta \in [0,7-j]$ it corresponds with (in blue, for dots labeled with $a$, and in red for dots labeled with $b$). In the following table, we make explicit every rule of Algorithm \ref{algo:insertionlabeling} and Definition~\ref{defi:deltainsertions} that leads to the plotting of the dots of $C_j^{\Phi(f_1)}$.

\begin{center}
\begin{center}
  \begin{tabular}{ c | c | c}
    $j$ & Rule of Algorithm \ref{algo:insertionlabeling} & Rules of Definition~\ref{defi:deltainsertions}  \\ \hline
    1 & I.1- & 1.(a) and 2.(a)i. \\ \hline
        2 & I.1- & 1.(a) and 2.(a)ii. \\ \hline
            3 & I.1- & 1.(a) and 2.(a)ii. \\ \hline
                4 & I.2-b)i. & 1.(b) and 2.(a)ii. \\ \hline
                    5 & I.2-b)ii. & 2.(b) and 2.(a)ii. \\ \hline
                        6 & I.2-a) & 2.(b) and 2.(a)i. \\ \hline
                            7 & I.1- & 1.(b) and 1.(b)
  \end{tabular}
\end{center}
\end{center}

\begin{figure}[!htbp]
    \centering

\begin{tikzpicture}[scale=0.4]

\draw (0,2) -- (0,-32);
\draw (0,2) -- (0,-32);
\draw (9,2) -- (9,-32);
\draw (18,2) -- (18,-32);
\draw (0,2) rectangle (27,-32);
\draw (0,-15) rectangle (27,-16);
\draw (0,2) rectangle (27,1);

\draw (4.5,1.5) node[scale=0.8]{$j=1$};
\draw (13.5,1.5) node[scale=0.8]{$j=2$};
\draw (22.5,1.5) node[scale=0.8]{$j=3$};
\draw (4.5,-15.5) node[scale=0.8]{$j=4$};
\draw (13.5,-15.5) node[scale=0.8]{$j=5$};
\draw (22.5,-15.5) node[scale=0.8]{$j=6$};

\begin{scope}[xshift=1cm,yshift=-14cm]
\draw (0,0) grid[step=1] (7,14);
\draw (0,0) -- (7,7);
\draw (0,14) [dashed] -- (7,7);
\draw [color = blue] (0.5,0.5) node[scale=1]{$a$};
\draw [color = red] (0.5,10.5) node[scale=1]{$b$};

\draw [color = blue] (0.5,0.5) circle (0.7);
\draw [color = red] (0.5,10.5) circle (0.7);

\draw [color=blue] (-0.5,0.5) node[scale=0.7]{$0$};
\draw (-0.5,1.5) node[scale=0.7]{$1$};
\draw (-0.5,2.5) node[scale=0.7]{$2$};
\draw (-0.5,3.5) node[scale=0.7]{$3$};
\draw (-0.5,4.5) node[scale=0.7]{$4$};
\draw (-0.5,5.5) node[scale=0.7]{$5$};
\draw (-0.5,6.5) node[scale=0.7]{$6$};
\draw (-0.5,12.5) node[scale=0.7]{$0$};
\draw (-0.5,11.5) node[scale=0.7]{$1$};
\draw [color=red] (-0.5,10.5) node[scale=0.7]{$2$};
\draw (-0.5,9.5) node[scale=0.7]{$3$};
\draw (-0.5,8.5) node[scale=0.7]{$4$};
\draw (-0.5,7.5) node[scale=0.7]{$5$};
\draw (-0.5,13.5) node[scale=0.7]{$6$};

\draw [fill=gray,opacity=0.25] (0,12) rectangle (1,13);
\draw [fill=gray,opacity=0.25] (1,11) rectangle (2,12);
\draw [fill=gray,opacity=0.25] (2,10) rectangle (3,11);
\draw [fill=gray,opacity=0.25] (3,9) rectangle (4,10);
\draw [fill=gray,opacity=0.25] (4,8) rectangle (5,9);
\draw [fill=gray,opacity=0.25] (5,7) rectangle (6,8);

\draw [fill=gray,opacity=0.25] (0,0) rectangle (1,1);
\draw [fill=gray,opacity=0.25] (1,1) rectangle (2,2);
\draw [fill=gray,opacity=0.25] (2,2) rectangle (3,3);
\draw [fill=gray,opacity=0.25] (3,3) rectangle (4,4);
\draw [fill=gray,opacity=0.25] (4,4) rectangle (5,5);
\draw [fill=gray,opacity=0.25] (5,5) rectangle (6,6);

\draw [fill=blue,opacity=0.25] (0,0) rectangle (1,7);
\draw [fill=red,opacity=0.25] (0,7) rectangle (1,14);
\end{scope}

\begin{scope}[xshift=10cm,yshift=-14cm]
\draw (0,0) grid[step=1] (7,14);
\draw (0,0) -- (7,7);
\draw (0,14) [dashed] -- (7,7);
\draw (0.5,0.5) node[scale=1]{$a$};
\draw (0.5,10.5) node[scale=1]{$b$};
\draw [color = blue] (1.5,1.5) node[scale=1]{$a$};
\draw [color = red] (1.5,3.5) node[scale=1]{$b$};

\draw [color = red] (1.5,3.5) circle (0.7);
\draw [color = blue] (1.5,1.5) circle (0.7);

\draw [fill=blue,opacity=0.25] (1,1) rectangle (2,7);
\draw [fill=red,opacity=0.25] (1,13) rectangle (2,14);
\draw [fill=red,opacity=0.25] (1,7) rectangle (2,12);

\draw [fill=gray,opacity=0.25] (0,12) rectangle (1,13);
\draw [fill=gray,opacity=0.25] (1,11) rectangle (2,12);
\draw [fill=gray,opacity=0.25] (2,10) rectangle (3,11);
\draw [fill=gray,opacity=0.25] (3,9) rectangle (4,10);
\draw [fill=gray,opacity=0.25] (4,8) rectangle (5,9);
\draw [fill=gray,opacity=0.25] (5,7) rectangle (6,8);

\draw [fill=gray,opacity=0.25] (0,0) rectangle (1,1);
\draw [fill=gray,opacity=0.25] (1,1) rectangle (2,2);
\draw [fill=gray,opacity=0.25] (2,2) rectangle (3,3);
\draw [fill=gray,opacity=0.25] (3,3) rectangle (4,4);
\draw [fill=gray,opacity=0.25] (4,4) rectangle (5,5);
\draw [fill=gray,opacity=0.25] (5,5) rectangle (6,6);

\draw [color = blue] (-0.5,1.5) node[scale=0.7]{$0$};
\draw (-0.5,2.5) node[scale=0.7]{$1$};
\draw [color = red] (-0.5,3.5) node[scale=0.7]{$2$};
\draw (-0.5,4.5) node[scale=0.7]{$3$};
\draw (-0.5,5.5) node[scale=0.7]{$4$};
\draw (-0.5,6.5) node[scale=0.7]{$5$};
\draw (-0.5,11.5) node[scale=0.7]{$0$};
\draw (-0.5,10.5) node[scale=0.7]{$1$};
\draw (-0.5,9.5) node[scale=0.7]{$2$};
\draw (-0.5,8.5) node[scale=0.7]{$3$};
\draw (-0.5,7.5) node[scale=0.7]{$4$};
\draw (-0.5,13.5) node[scale=0.7]{$5$};
\end{scope}

\begin{scope}[xshift=19cm,yshift=-14cm]
\draw (0,0) grid[step=1] (7,14);
\draw (0,0) -- (7,7);
\draw (0,14) [dashed] -- (7,7);
\draw (0.5,0.5) node[scale=1]{$a$};
\draw (0.5,10.5) node[scale=1]{$b$};
\draw (1.5,1.5) node[scale=1]{$a$};
\draw (1.5,3.5) node[scale=1]{$b$};
\draw [color = red] (2.5,2.5) node[scale=1]{$b$};
\draw [color = blue] (2.5,5.5) node[scale=1]{$a$};

\draw [color = red] (2.5,2.5) circle (0.7);
\draw [color = blue] (2.5,5.5) circle (0.7);

\draw [color=red] (-0.5,2.5) node[scale=0.7]{$0$};
\draw (-0.5,3.5) node[scale=0.7]{$1$};
\draw (-0.5,4.5) node[scale=0.7]{$2$};
\draw [color = blue] (-0.5,5.5) node[scale=0.7]{$3$};
\draw (-0.5,6.5) node[scale=0.7]{$4$};
\draw (-0.5,10.5) node[scale=0.7]{$0$};
\draw (-0.5,9.5) node[scale=0.7]{$1$};
\draw (-0.5,8.5) node[scale=0.7]{$2$};
\draw (-0.5,7.5) node[scale=0.7]{$3$};
\draw (-0.5,13.5) node[scale=0.7]{$4$};

\draw [fill=gray,opacity=0.25] (0,12) rectangle (1,13);
\draw [fill=gray,opacity=0.25] (1,11) rectangle (2,12);
\draw [fill=gray,opacity=0.25] (2,10) rectangle (3,11);
\draw [fill=gray,opacity=0.25] (3,9) rectangle (4,10);
\draw [fill=gray,opacity=0.25] (4,8) rectangle (5,9);
\draw [fill=gray,opacity=0.25] (5,7) rectangle (6,8);

\draw [fill=gray,opacity=0.25] (0,0) rectangle (1,1);
\draw [fill=gray,opacity=0.25] (1,1) rectangle (2,2);
\draw [fill=gray,opacity=0.25] (2,2) rectangle (3,3);
\draw [fill=gray,opacity=0.25] (3,3) rectangle (4,4);
\draw [fill=gray,opacity=0.25] (4,4) rectangle (5,5);
\draw [fill=gray,opacity=0.25] (5,5) rectangle (6,6);

\draw [fill=blue,opacity=0.25] (2,2) rectangle (3,7);
\draw [fill=red,opacity=0.25] (2,13) rectangle (3,14);
\draw [fill=red,opacity=0.25] (2,7) rectangle (3,11);
\end{scope}

\begin{scope}[xshift=1cm,yshift=-31cm]
\draw (0,0) grid[step=1] (7,14);
\draw (0,0) -- (7,7);
\draw (0,14) [dashed] -- (7,7);
\draw (0.5,0.5) node[scale=1]{$a$};
\draw (0.5,10.5) node[scale=1]{$b$};
\draw (1.5,1.5) node[scale=1]{$a$};
\draw (1.5,3.5) node[scale=1]{$b$};
\draw (2.5,2.5) node[scale=1]{$b$};
\draw (2.5,5.5) node[scale=1]{$a$};
\draw [color = red] (3.5,4.5) node[scale=1]{$b$};
\draw [color = red] (3.5,9.5) node[scale=1]{$b$};

\draw [color = red] (3.5,4.5) circle (0.7);
\draw [color = red] (3.5,9.5) circle (0.7);

\draw (-0.5,3.5) node[scale=0.7]{$0$};
\draw [color = red] (-0.5,4.5) node[scale=0.7]{$1$};
\draw (-0.5,5.5) node[scale=0.7]{$2$};
\draw (-0.5,6.5) node[scale=0.7]{$3$};
\draw [color = red] (-0.5,9.5) node[scale=0.7]{$0$};
\draw (-0.5,8.5) node[scale=0.7]{$1$};
\draw (-0.5,7.5) node[scale=0.7]{$2$};
\draw (-0.5,13.5) node[scale=0.7]{$3$};

\draw [fill=gray,opacity=0.25] (0,12) rectangle (1,13);
\draw [fill=gray,opacity=0.25] (1,11) rectangle (2,12);
\draw [fill=gray,opacity=0.25] (2,10) rectangle (3,11);
\draw [fill=gray,opacity=0.25] (3,9) rectangle (4,10);
\draw [fill=gray,opacity=0.25] (4,8) rectangle (5,9);
\draw [fill=gray,opacity=0.25] (5,7) rectangle (6,8);

\draw [fill=gray,opacity=0.25] (0,0) rectangle (1,1);
\draw [fill=gray,opacity=0.25] (1,1) rectangle (2,2);
\draw [fill=gray,opacity=0.25] (2,2) rectangle (3,3);
\draw [fill=gray,opacity=0.25] (3,3) rectangle (4,4);
\draw [fill=gray,opacity=0.25] (4,4) rectangle (5,5);
\draw [fill=gray,opacity=0.25] (5,5) rectangle (6,6);

\draw [fill=blue,opacity=0.25] (3,3) rectangle (4,7);
\draw [fill=red,opacity=0.25] (3,13) rectangle (4,14);
\draw [fill=red,opacity=0.25] (3,7) rectangle (4,10);
\end{scope}

\begin{scope}[xshift=10cm,yshift=-31cm]
\draw (0,0) grid[step=1] (7,14);
\draw (0,0) -- (7,7);
\draw (0,14) [dashed] -- (7,7);
\draw (0.5,0.5) node[scale=1]{$a$};
\draw (0.5,10.5) node[scale=1]{$b$};
\draw (1.5,1.5) node[scale=1]{$a$};
\draw (1.5,3.5) node[scale=1]{$b$};
\draw (2.5,2.5) node[scale=1]{$b$};
\draw (2.5,5.5) node[scale=1]{$a$};
\draw (3.5,4.5) node[scale=1]{$b$};
\draw (3.5,9.5) node[scale=1]{$b$};
\draw [color = blue] (4.5,6.5) node[scale=1]{$a$};
\draw [color = blue] (4.5,12.5) node[scale=1]{$a$};

\draw [color = blue] (4.5,6.5) circle (0.7);

\draw [color = blue] (4.5,5.5) circle (0.7);
\draw [blue, very thick] (4.5,5.5) [densely dashed] -- (2.5,5.5);
\draw [blue, very thick] (2.5,5.5) [densely dashed] -- (2.5,10.5);
\draw [blue, very thick] (2.5,10.5) [densely dashed] -- (0.5,10.5);
\draw [blue, very thick] (0.5,10.5) [densely dashed] -- (0.5,12.5);
\draw [blue, very thick] (0.5,12.5) [densely dashed] -- (4.3,12.5);

\draw [fill=gray,opacity=0.25] (0,12) rectangle (1,13);
\draw [fill=gray,opacity=0.25] (1,11) rectangle (2,12);
\draw [fill=gray,opacity=0.25] (2,10) rectangle (3,11);
\draw [fill=gray,opacity=0.25] (3,9) rectangle (4,10);
\draw [fill=gray,opacity=0.25] (4,8) rectangle (5,9);
\draw [fill=gray,opacity=0.25] (5,7) rectangle (6,8);

\draw [fill=gray,opacity=0.25] (0,0) rectangle (1,1);
\draw [fill=gray,opacity=0.25] (1,1) rectangle (2,2);
\draw [fill=gray,opacity=0.25] (2,2) rectangle (3,3);
\draw [fill=gray,opacity=0.25] (3,3) rectangle (4,4);
\draw [fill=gray,opacity=0.25] (4,4) rectangle (5,5);
\draw [fill=gray,opacity=0.25] (5,5) rectangle (6,6);

\draw (-0.5,4.5) node[scale=0.7]{$0$};
\draw [color=blue] (-0.5,5.5) node[scale=0.7]{$1$};
\draw [color=blue] (-0.5,6.5) node[scale=0.7]{$2$};
\draw (-0.5,8.5) node[scale=0.7]{$0$};
\draw (-0.5,7.5) node[scale=0.7]{$1$};
\draw (-0.5,13.5) node[scale=0.7]{$2$};

\draw [fill=blue,opacity=0.25] (4,4) rectangle (5,7);
\draw [fill=red,opacity=0.25] (4,13) rectangle (5,14);
\draw [fill=red,opacity=0.25] (4,7) rectangle (5,9);
\end{scope}

\begin{scope}[xshift=19cm,yshift=-31cm]
\draw (0,0) grid[step=1] (7,14);
\draw (0,0) -- (7,7);
\draw (0,14) [dashed] -- (7,7);
\draw (0.5,0.5) node[scale=1]{$a$};
\draw (0.5,10.5) node[scale=1]{$b$};
\draw (1.5,1.5) node[scale=1]{$a$};
\draw (1.5,3.5) node[scale=1]{$b$};
\draw (2.5,2.5) node[scale=1]{$b$};
\draw (2.5,5.5) node[scale=1]{$a$};
\draw (3.5,4.5) node[scale=1]{$b$};
\draw (3.5,9.5) node[scale=1]{$b$};
\draw (4.5,6.5) node[scale=1]{$a$};
\draw (4.5,12.5) node[scale=1]{$a$};
\draw [color = blue] (5.5,8.5) node[scale=1]{$a$};
\draw [color = red] (5.5,13.5) node[scale=1]{$b$};

\draw [color = blue] (5.5,5.5) circle (0.7);
\draw [blue, very thick] (5.5,5.5) [densely dashed] -- (2.5,5.5);
\draw [blue, very thick] (2.5,5.5) [densely dashed] -- (2.5,10.5);
\draw [blue, very thick] (2.5,10.5) [densely dashed] -- (0.5,10.5);
\draw [blue, very thick] (0.5,10.5) [densely dashed] -- (0.5,12.5);
\draw [blue, very thick] (0.5,12.5) [densely dashed] -- (4.5,12.5);
\draw [blue, very thick] (4.5,12.5) [densely dashed] -- (4.5,8.5);
\draw [blue, very thick] (4.5,8.5) [densely dashed] -- (5.3,8.5);

\draw [fill=gray,opacity=0.25] (0,12) rectangle (1,13);
\draw [fill=gray,opacity=0.25] (1,11) rectangle (2,12);
\draw [fill=gray,opacity=0.25] (2,10) rectangle (3,11);
\draw [fill=gray,opacity=0.25] (3,9) rectangle (4,10);
\draw [fill=gray,opacity=0.25] (4,8) rectangle (5,9);
\draw [fill=gray,opacity=0.25] (5,7) rectangle (6,8);

\draw [fill=gray,opacity=0.25] (0,0) rectangle (1,1);
\draw [fill=gray,opacity=0.25] (1,1) rectangle (2,2);
\draw [fill=gray,opacity=0.25] (2,2) rectangle (3,3);
\draw [fill=gray,opacity=0.25] (3,3) rectangle (4,4);
\draw [fill=gray,opacity=0.25] (4,4) rectangle (5,5);
\draw [fill=gray,opacity=0.25] (5,5) rectangle (6,6);

\draw [color = red] (5.5,13.5) circle (0.7);

\draw [color=blue] (-0.5,5.5) node[scale=0.7]{$0$};
\draw (-0.5,6.5) node[scale=0.7]{$1$};
\draw (-0.5,7.5) node[scale=0.7]{$0$};
\draw [color = red] (-0.5,13.5) node[scale=0.7]{$1$};

\draw [fill=blue,opacity=0.25] (5,5) rectangle (6,7);
\draw [fill=red,opacity=0.25] (5,13) rectangle (6,14);
\draw [fill=red,opacity=0.25] (5,7) rectangle (6,8);
\end{scope}

\begin{scope}[xshift=9cm,yshift=-32cm]

\draw (0,0) rectangle (9,-17);
\draw (0,0) rectangle (9,-1);

\draw (4.5,-0.5) node[scale=0.8]{$j = 7$};

\begin{scope}[xshift=1cm,yshift=-16cm]
\draw (0,0) grid[step=1] (7,14);
\draw (0,0) -- (7,7);
\draw (0,14) [dashed] -- (7,7);
\draw (0.5,0.5) node[scale=1]{$a$};
\draw (0.5,10.5) node[scale=1]{$b$};
\draw (1.5,1.5) node[scale=1]{$a$};
\draw (1.5,3.5) node[scale=1]{$b$};
\draw (2.5,2.5) node[scale=1]{$b$};
\draw (2.5,5.5) node[scale=1]{$a$};
\draw (3.5,4.5) node[scale=1]{$b$};
\draw (3.5,9.5) node[scale=1]{$b$};
\draw (4.5,6.5) node[scale=1]{$a$};
\draw (4.5,12.5) node[scale=1]{$a$};
\draw (5.5,8.5) node[scale=1]{$a$};
\draw (5.5,13.5) node[scale=1]{$b$};
\draw [color = blue] (6.5,11.5) node[scale=1]{$a$};
\draw [color = red] (6.5,7.5) node[scale=1]{$b$};

\draw [fill=gray,opacity=0.25] (0,12) rectangle (1,13);
\draw [fill=gray,opacity=0.25] (1,11) rectangle (2,12);
\draw [fill=gray,opacity=0.25] (2,10) rectangle (3,11);
\draw [fill=gray,opacity=0.25] (3,9) rectangle (4,10);
\draw [fill=gray,opacity=0.25] (4,8) rectangle (5,9);
\draw [fill=gray,opacity=0.25] (5,7) rectangle (6,8);

\draw [fill=gray,opacity=0.25] (0,0) rectangle (1,1);
\draw [fill=gray,opacity=0.25] (1,1) rectangle (2,2);
\draw [fill=gray,opacity=0.25] (2,2) rectangle (3,3);
\draw [fill=gray,opacity=0.25] (3,3) rectangle (4,4);
\draw [fill=gray,opacity=0.25] (4,4) rectangle (5,5);
\draw [fill=gray,opacity=0.25] (5,5) rectangle (6,6);

\draw [color = blue] (6.5,6.5) circle (0.7);
\draw [blue, very thick] (6.5,6.5) [densely dashed] -- (4.5,6.5);
\draw [blue, very thick] (4.5,6.5) [densely dashed] -- (4.5,4.5);
\draw [blue, very thick] (4.5,4.5) [densely dashed] -- (3.5,4.5);
\draw [blue, very thick] (3.5,4.5) [densely dashed] -- (3.5,3.5);
\draw [blue, very thick] (3.5,3.5) [densely dashed] -- (1.5,3.5);
\draw [blue, very thick] (1.5,3.5) [densely dashed] -- (1.5,11.5);
\draw [blue, very thick] (1.5,11.5) [densely dashed] -- (6.3,11.5);

\draw [color = red] (6.5,13.5) circle (0.7);
\draw [red, very thick] (6.5,13.5) [densely dashed] -- (5.5,13.5);
\draw [red, very thick] (5.5,13.5) [densely dashed] -- (5.5,7.5);
\draw [red, very thick] (5.5,7.5) [densely dashed] -- (6.3,7.5);

\draw [color = blue] (-0.5,6.5) node[scale=0.7]{$0$};
\draw [color = red] (-0.5,13.5) node[scale=0.7]{$0$};

\draw [fill=blue,opacity=0.25] (6,6) rectangle (7,7);
\draw [fill=red,opacity=0.25] (6,13) rectangle (7,14);
\end{scope}

\end{scope}

\end{tikzpicture}

 \caption{Computation of $\Phi(f_1) \in \Tab_7$.}
\label{fig:computationPhif1annex}

\end{figure}

\end{appendices} 

\FloatBarrier

\end{document}